\newcommand{\version}{2.0}
\date{10 Sept 2010}
\author[A. Fornasiero]{Antongiulio Fornasiero} 
\address{Institut f\"ur Mathematische Logik\\
    Einsteinstr.~62, 48149 M\"unster, Germany}
\email{antongiulio.fornasiero@googlemail.com}
\title[Lovely pairs, v.~\version]{Lovely pairs for independence relations\\
{\mdseries\footnotesize Version \version}}
\newcommand{\rom}{\textup}
\def\hyph{\nobreakdash-\hspace{0pt}\relax}
\newcommand*{\intro}[1]{\textbf{#1}}
\newcommand*{\Pa}[1]{\bigl( #1 \bigr)}
\newcommand*{\set}[1]{\{#1\}}
\newcommand*{\pair}[1]{\langle #1 \rangle}
\newcommand*{\abs}[1]{\lvert#1\rvert}
\newcommand*{\card}[1]{\lvert#1\rvert}
\newcommand*{\length}[1]{\lvert#1\rvert}
\newcommand{\Nat}{\mathbb{N}}
\newcommand{\Q}{\mathbb{Q}}
\newcommand{\Z}{\mathbb{Z}}
\newcommand{\rest}{\upharpoonright}
\DeclareMathOperator{\dcl}{dcl}
\DeclareMathOperator{\acl}{acl}
\DeclareMathOperator{\dcleq}{dcl^{\eq}}
\DeclareMathOperator{\acleq}{acl^{\eq}}
\newcommand{\Sdiff}{\mathbin \Delta}
\newcommand{\et}{\ \&\ }
\newcommand{\ett}{\quad \& \quad }
\newcommand{\Ra}{\quad \Rightarrow \quad}
\newcommand{\av}{\bar a}
\newcommand{\bv}{\bar b}
\newcommand{\cv}{\bar c}
\newcommand{\dv}{\bar d}
\newcommand{\fv}{\bar f}
\newcommand{\pv}{\bar p}
\newcommand{\x}{\bar x}
\newcommand{\y}{\bar y}
\newcommand{\z}{\bar z}
\newcommand{\Ffam}{\mathcal F}
\DeclareMathOperator{\Dom}{Dom}
\DeclareMathOperator{\Imag}{Im}
\DeclareMathOperator{\tp}{tp}
\DeclareMathOperator{\Theory}{Th}
\newcommand{\Ttwo}{T^2}
\newcommand{\Td}{T^d}
\newcommand{\Ltwo}{\Lang^2}
\newcommand{\Lang}{\mathcal L}
\newcommand{\monster}{\mathfrak C}
\newcommand{\Mb}{\monster}
\newcommand{\Mp}{\monster_P}
\newcommand{\M}{M}
\newcommand{\N}{N}
\DeclareMathOperator{\aut}{Aut}
\newcommand{\eq}{\mathrm{eq}}
\newcommand{\Meq}{M^\eq}
\newcommand{\Mpeq}{{M'}^\eq}
\newcommand{\monstereq}{\monster^\eq}
\newcommand*{\param}[1]{\ulcorner #1 \urcorner}
\DeclareMathOperator{\smallo}{o}
\DeclareMathOperator{\bigo}{O}
\DeclareMathOperator{\U}{U}
\newcommand{\Uind}{\U^{\ind}}
\DeclareMathOperator{\SU}{SU}
\newcommand{\Uacl}{\U^{\acl}}
\newcommand{\Up}{\U^{\mathrm P}}
\newcommand{\Utheta}{\U^\theta}
\DeclareMathOperator{\mat}{cl}
\newcommand{\mateq}{\mat^{\eq}}
\newcommand{\matP}{\mat^{\mathrm P}}
\newcommand{\mattheta}{\mat^{\theta}}
\def\Ind#1#2{#1\setbox0=\hbox{$#1x$}\kern\wd0\hbox to
  0pt{\hss$#1\mid$\hss}\lower.9\ht0\hbox to 0pt{\hss$#1\smile$\hss}\kern\wd0}
\newcommand*{\ind}[1][]{\mathop{\mathpalette\Ind{}^{\!\!\!\!\rlap{$\scriptscriptstyle\textnormal{#1}$}\,\,\,\,}}}
\def\indtheta{\ind[$\theta$]}
\def\indp{\ind_P}
\def\indf{\ind[f]}
\def\indacl{\ind[$\acl$]}
\def\indmat{\ind[$\mat$]}
\def\indeq{\ind[$\eq$]}
\newcommand*{\notind}[1][]{\mathrel{\not\mkern-7mu{\ind[#1]}}}
\newcommand{\forkext}{\mathrel{\sqsubseteq_{\mkern-15mu{\not}\mkern15mu}}}
\newcommand{\nforkext}{\sqsubseteq}
\newcommand{\elem}{\equiv}
\newcommand{\Ptp}{\ensuremath{\text{\rm P-$\tp$}}}
\newcommand{\Pindependent}{P\hyph independent\xspace}
\newcommand{\Ptype}{P\hyph type\xspace}
\newcommand{\zapplication}{Z-application\xspace}
\newcommand{\Wlog}{W.l.o.g\mbox{.}\xspace}
\newcommand{\wloG}{w.l.o.g\mbox{.}\xspace}
\newcommand{\ie}{i.e\mbox{.}\xspace}
\newcommand{\cf}{cf\mbox{.}\xspace}
\newcommand{\tfae}{t.f.a.e\mbox{.}\xspace}
\newcommand*{\Case}[1]{\par\medskip\noindent\textit{Case} #1:}
\newtheorem{lemma}{Lemma}[section]
\newtheorem{theorem}[lemma]{Theorem}
\newtheorem{corollary}[lemma]{Corollary}
\newtheorem{conjecture}[lemma]{Conjecture}
\newtheorem{proposition}[lemma]{Proposition}
\newtheorem{fact}[lemma]{Fact}
\newtheorem*{hypothesis*}{Hypothesis}
\newtheorem*{proviso*}{Proviso}
\newtheorem*{fact*}{Fact}
\theoremstyle{remark}
\newtheorem{claim}{Claim}
\newtheorem*{claim*}{Claim}
\theoremstyle{definition}
\newtheorem{definition}[lemma]{Definition}
\newtheorem{remark}[lemma]{Remark}
\newtheorem{example}[lemma]{Example}
\newtheorem{question}[lemma]{Question}
\newtheorem*{warning*}{Warning}
\newenvironment{sentence}[1][]{%
  \begin{list}{}{%
    \setlength\topsep{0.5ex}%
    \setlength\leftmargin{\parindent}%
  }%
  \item[#1]
 }
 {\end{list}}
\numberwithin{equation}{section}
\begin{document}

\begin{abstract}
In the literature there are two different notions of lovely pairs of a theory T, according to whether T is simple or geometric.
We introduce a notion of lovely pairs for an independence relation, which generalizes both the simple and the geometric case, and show how the main theorems for those two cases extend to our general notion.
\end{abstract}

\keywords{Lovely pair, dense pair, independence relation}
\subjclass[2010]{%
Primary 03C64;    
Secondary 12J15,  
54E52.    	
}

\maketitle

{\small
\tableofcontents
}

\makeatletter
\renewcommand\@makefnmark%
   {\normalfont(\@textsuperscript{\normalfont\@thefnmark})}
\renewcommand\@makefntext[1]%
   {\noindent\makebox[1.8em][r]{\@makefnmark\ }#1}
\makeatother


\section{Introduction}

Let $T$ be a complete first-order theory and $\monster$ be a monster model for~$T$.
In the literature there are at least different notions of lovely pairs, according to
whether $T$ is simple \cite{poizat83, BPV, vassiliev05} or geometric
\cite{macintyre, vdd-dense, boxall}.
Another class of lovely pairs, generalizing the geometric case, is given by dense pairs of theories with an existential matroid (see \cite{fornasiero-matroids} for the case when $T$ expands a field).

The study of lovely pairs started with~\cite{robinson59}, where dense pairs of real closed fields and pairs of algebraically closed fields were studied, and has continued, under various names, until present day: \cite{macintyre} studied lovely pairs of geometric theories and \cite{vdd-dense} lovely pairs of o-minimal structure expanding a group (they called them ``dense pairs'', because when $T$ is an o-minimal theory expanding a group, a lovely pair for $T$ is a pair $A \prec B \models T$, with $A$ dense subset of~$B$);
on the other hand, \cite{poizat83} studied beautiful pairs of stable structures, which were generalized to lovely pairs of simple structures in~\cite{BPV}.

Many results and techniques are similar for all the above classes of theories.
In this article we introduce a unified approach, via a general notion, the $\ind$-lovely pairs (Definition~\ref{def:lovely}), where $\ind$ is an independence relation on~$\monster$ in the sense of~\cite{adler}, and show that for suitable values of $\ind$ we get the various special cases we recorded above:
\begin{enumerate}
\item
if $\monster$ is simple and $\indf$ is Shelah's forking, then a $\indf$-lovely pair is a lovely pair of a simple theory in the sense of~\cite{BPV};
\item 
if $\monster$ is geometric and $\indacl\,$ is the independence relation induced by~$\acl$, then a $\indacl$-lovely pair is a lovely pair of a geometric theory in the sense of \cite{boxall};
\item 
if $\monster$ has an existential matroid $\mat$ and expands a field and $\indmat$ is the independence relation induced by~$\mat$, then a $\indmat$-lovely pair is a dense pair in the sense of~\cite{fornasiero-matroids}.
\end{enumerate}

It may happen that $\monster$ has more than one independence relation: in this case, to different independence relations correspond different notions of lovely pairs. 
For instance, $\monster$~can be both stable and geometric, but $\indf \neq \indmat$: in this case, a lovely pair of $T$ as a geometric theory will be different from a lovely pair of $T$ as a simple theory: see Example~\ref{ex:triple-love}.

We generalize some of the main results for lovely pairs from \cite{vdd-dense, BPV, boxall} to $\ind$-lovely pairs:
see \S\ref{subsec:existence}, \ref{subsec:uniqueness}, \ref{sec:model-complete}, \ref{sec:tuples}.
Moreover, we show how lovely pairs inherit ``stability'' properties from~$T$:
that is, if $T$ is stable, or simple, or NIP, then lovely pairs of $T$ have the same property, see \S\ref{sec:stability}.

Since $\ind$-lovely pairs depend in an essential way on the independence relation~$\ind$, we need a more detailed study of independence relations, which we do in \S\ref{sec:preliminary}.
Moreover, different independence relations give different kinds of lovely pairs: thus, it seems worthwhile to produce new independence relations; a technique to produce new independence relations is explained in \S\ref{sec:new-independence}.
Finally, a $\ind$-lovely pair has at least one independence relation $\ind_P$ inherited 
from~$\monster$ (see \S\ref{sec:lovely-independence}): we hope that, among
other things, $\ind_P$ will prove useful in studying the original theory~$T$.

A notion of $\ind$-lovely pairs has been also proposed by I. Ben Yaacov, using
a different notion of ``independence relation'' than the one employed here.




\section[Preliminaries]{Preliminaries on independence relations}
\label{sec:preliminary}
Let $\monster$ be a monster model of some complete theory~$T$; 
``\intro{small}'' will mean 
``of cardinality smaller than the monstrosity of $\monster$''.

Let $\ind$ be a symmetric independence relation on $\monster$, in the sense of 
\cite{adler}; so $\ind$ is a ternary relation on small subsets of $\monster$ satisfying, for every small $A, B, C, D\subseteq \monster$, the following conditions. 
\begin{description}
\item[Invariance] for every $\sigma\in \aut(\monster)$,
$A\ind_BC\Rightarrow \sigma(A)\ind_{\sigma(B)}\sigma(C)$. 


\item[Normality] $A \ind_C B \Rightarrow A C \ind_C B$.

\item[Symmetry] $A\ind_BC\Rightarrow C\ind_BA$.

\item[Left Transitivity] (which we will simply call \textbf{Transitivity}) 
assuming $B\subseteq C\subseteq D$, $A\ind_B D$ iff $A\ind_BC$ and $A\ind_CD$.%
\footnote{In Adler's terminology, this axiom also includes Monotonicity and Base Monotonicity.}

\item[Finite character] 
$A\ind_BC$ iff $A_0\ind_BC$ for all finite $A_0\subseteq A$.

\item[Extension] 
there is some $A^\prime\equiv_BA$ such that $A^\prime\ind_BC$.

\item[Local character] 
there is some small $\kappa_0$ (depending only on~$\ind$),
such that there is some $C_0\subseteq C$ with 
$\card{C_0} < \kappa_0 + \card{A}^+$ and $A\ind_{C_0} C$. 

\end{description}

Following \cite{adler}, we say that $\ind$ satisfies \intro{strong
  finite character} if, whenever $A \notind_B C$, there exists a formula
$\phi(\x, \y, \z)$, $\av \in A$, $\bv \in B$, and $\cv \in C$, such that
$\monster \models \phi(\x, \y, \z)$, and, for every $\av' \in \monster$,
if $\monster \models \phi(\av', \bv, \cv)$, then $\av' \notind_B C$.

\medskip

We are \emph{not} assuming that $\monster$ eliminates imaginaries. For
non-small $A,B,C\subseteq\monster$, $A\ind_BC$ is defined to mean the
following: 
\begin{sentence}
For all small $A^\prime\subseteq A$, $B^\prime\subseteq B$, and small
$C^\prime\subseteq C$, there is some small $B''$ such that $B' \subseteq B''
\subseteq B$, and $A^\prime\ind_{B''}C^\prime$.
\end{sentence}
For every $A \subset \monster$ and $c \in \monster$, we say that $c \in
\mat(A)$ if $c \ind_A c$. 
By ``tuple'' we will mean a tuple of small length.

\begin{remark}\label{rem:local-char}
In the axioms of an independence relation, we can substitute the Local
Character Axiom with the following one:
\begin{sentence}[(LC')]
There is some $\kappa_0$ such that,
for every $\av$ finite tuple in $\monster$ and $B$ small subset of~$\monster$,
there exists $B_0 \subset B$, such that $\card{B_0} < \kappa_0$ and 
$\av \ind_{B_0} B$.
\end{sentence}
Moreover, the $\kappa_0$ in (LC') and the $\kappa_0$ in the Local Character
Axiom are the same.
\end{remark}
\begin{proof}
Let $A$ and $B$ be small subsets of~$\monster$.
For every $\av$ finite tuple in $A$, let $B_{\av} \subset B$, such that
$\card{B_{\av}} < \kappa_0$ and $\av \ind_{B_{\av}} B$.
Define $C := \bigcup \set{B_{\av}: \av \subset A \text{ finite}} \subseteq B$.
Then, by Finite Character, $A \ind_{C} B$, 
and $\card{C} < \kappa_0 + \card{A}^+$.
\end{proof}

\begin{remark}
Let $\av$ be a small tuple and $B$ and $C$ be small subsets of~$\monster$.
Assume that $\tp(\av / BC)$ is finitely satisfied in~$C$.
Then, $\av \ind_C B$.
\end{remark}
\begin{proof}
The assumptions imply that $\tp(\av/BC)$ does not fork in
the sense of Shelah's over~$C$.
By \cite[Remark 1.20]{adler}, we are done.
\end{proof}
\begin{question}
Is there a more direct proof of the above remark, that does not use Shelah's forking?
\end{question}

\begin{definition}
$\kappa_0$ is the smallest regular cardinal $\kappa$, 
such that, for every finite tuple
$\av$ and every small set $B$, there exists $B_0 \subseteq B$, with
$\card{B_0} < \kappa$ and $\av \ind_{B_0} B$.
\end{definition}

\begin{remark}
$\kappa_0 \leq \abs T^+$; hence, for every small sets $A$ and~$B$, there exists $B_0 \subseteq B$, 
such that $\card{B_0} \leq  \card T + \card A$ and $A \ind_{B_0} B$.
\end{remark}
\begin{proof}
Given a small tuple $\av$ and a small set $B$, let $C$ be a subset of
$\av$ such that $\tp(\av/B C)$ is finitely satisfied in~$C$, and
$\card{C} \leq \card{B} + \card T$ (\cite[Remark 2.4]{adler}).
Hence, $\av \ind_C B$.
\end{proof}

\begin{definition}
Let $A$ and $B$ be small subsets of $\monster$, with $A \subseteq B$.
Let $p$ be a type over $A$ and $q$ be a type over~$B$.
We say that $q$ is a \intro{nonforking extension} of~$p$, and write $q \nforkext p$, if $q$ extends $p$ and
$q \ind_A B$.
We say that $q$ is a \intro{forking extension} of~$p$, and write $q \forkext p$, if $q$ extends $p$ and
$q \notind_A B$.
\end{definition}

In the next lemma we use the fact that $\kappa_0$ is regular.

\begin{lemma}\label{lem:forking-chain}
Let $A \subset \monster$ be small and $p \in S_n(A)$. There is no sequence $p = p_0 \forkext p_1 \forkext \dots \forkext p_i \dots$, indexed by $\kappa_0$, of forking extensions of $p$.
\end{lemma}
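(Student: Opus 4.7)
The plan is to suppose for contradiction that a chain $p = p_0 \forkext p_1 \forkext \dots \forkext p_i \forkext \dots$ of length $\kappa_0$ exists, let $A_i := \Dom(p_i)$, and set $B := \bigcup_{i < \kappa_0} A_i$. Since each $p_{i+1}$ extends $p_i$, the union $\bigcup_i p_i$ is a consistent complete type over~$B$, so one can pick a single tuple $\av \in \monster^n$ realizing every~$p_i$ (note $B$ is still small, as $\kappa_0 \leq \abs{T}^+$). The strategy is then to apply the Local Character Axiom to the finite tuple $\av$ and~$B$, and leverage the regularity of~$\kappa_0$ to locate the resulting ``base'' inside some~$A_j$, producing a nonforking instance that contradicts $p_{j+1} \forkext p_j$.

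More precisely, by the definition of $\kappa_0$ (applied to the finite tuple~$\av$ and the small set~$B$) there exists $B_0 \subseteq B$ with $\card{B_0} < \kappa_0$ and $\av \ind_{B_0} B$. Since $\kappa_0$ is regular and $(A_i)_{i < \kappa_0}$ is an increasing cofinal chain in~$B$, the image in $\kappa_0$ of a choice function assigning to each $b \in B_0$ an index $i_b$ with $b \in A_{i_b}$ is a set of size $< \kappa_0$, hence bounded; thus $B_0 \subseteq A_j$ for some $j < \kappa_0$.

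From $B_0 \subseteq A_j \subseteq A_{j+1} \subseteq B$ and $\av \ind_{B_0} B$, Transitivity (which here absorbs Base Monotonicity as noted in the footnote) yields $\av \ind_{A_j} A_{j+1}$. On the other hand, $p_{j+1} \forkext p_j$ means by definition $p_{j+1} \notind_{A_j} A_{j+1}$, and since $\av$ realizes $p_{j+1}$ this reads $\av \notind_{A_j} A_{j+1}$, a contradiction.

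There is no serious obstacle; the only point worth flagging is the cofinality step, which is exactly where the regularity clause in the definition of~$\kappa_0$ (emphasized in the lemma's preamble) is used. Everything else is a direct application of Local Character, Transitivity, and the definition of forking extension.
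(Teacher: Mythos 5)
Your proof is correct and takes essentially the same route as the paper's: union the domains, apply Local Character (the definition of $\kappa_0$) to get a base of size $<\kappa_0$, use regularity to push that base into a single $A_j$, and contradict the forking at step $j+1$ via Transitivity/Base Monotonicity. The only difference is expository — you explicitly fix a tuple $\av$ realizing $\bigcup_i p_i$ before invoking Local Character, whereas the paper writes the slightly terser ``$p \ind_D B$''; your version makes the intended application of the axiom more transparent.
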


\begin{proof}
Assume, for contradiction, that such a sequence exists.
For every $i < \kappa_0$, let $B_i$ be the domain of $p_i$.
Let $B := \bigcup_{i < \kappa_0} B_i$, and $D \subset B$ such that
$\card{D} < \kappa_0$ and $p \ind_D B$ 
($D$ exists by definition of $\kappa_0$).
Since $\kappa_0$ is regular, $D \subseteq B_i$ for some $i < \kappa_0$, and
therefore $p \ind_{B_i} B$, and in particular $p \ind_{B_i} B_{i+1}$, absurd.
\end{proof}

\begin{lemma}
All axioms for $\ind$, except extension, are valid also for large subsets 
of~$\monster$. 
The Local Character Axiom holds with the same $\kappa_0$.
\end{lemma}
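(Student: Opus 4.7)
The plan is to verify each axiom other than Extension directly from the definition of $A \ind_B C$ for large $A, B, C$ given just above. Invariance, Symmetry, and Finite Character are immediate by unpacking the definition. For Normality ($A \ind_C B \Rightarrow AC \ind_C B$), decompose a small $X \subseteq AC$ as $X = A_0 \cup C_0$ with $A_0 \subseteq A$, $C_0 \subseteq C$; given small $C' \subseteq C$ and $B' \subseteq B$, apply the hypothesis to $A_0$, $C' \cup C_0$, $B'$ to obtain a small $C''$ with $C' \cup C_0 \subseteq C'' \subseteq C$ and $A_0 \ind_{C''} B'$. Since $C_0 \subseteq C''$, small-set Normality upgrades this to $A_0 C_0 \ind_{C''} B'$, as required.

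The $(\Rightarrow)$ direction of Transitivity is routine. The implication $A \ind_B D \Rightarrow A \ind_B C$ follows since every small $C' \subseteq C$ is also a small subset of~$D$. For $A \ind_B D \Rightarrow A \ind_C D$, given small $A'$, $C' \subseteq C$, $D' \subseteq D$, apply the hypothesis with $B$-part empty and right-hand side $C' \cup D'$ to obtain small $B'' \subseteq B$ with $A' \ind_{B''} C' \cup D'$. Setting $C'' := B'' \cup C' \subseteq C$ (small, since $B \subseteq C$), the small-set axioms applied to $B'' \subseteq C'' \subseteq C'' \cup D'$ yield $A' \ind_{C''} D'$.

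The $(\Leftarrow)$ direction of Transitivity is the main obstacle. Given $A \ind_B C$, $A \ind_C D$ (large) and small $A' \subseteq A$, $B' \subseteq B$, $D' \subseteq D$, I construct the required base by a bounded forking iteration. Set $B_0 := B'$; inductively, if there exist a finite $\av \subseteq A'$ and a small $B_{n+1} \subseteq B$ with $B_n \subseteq B_{n+1}$, $B_{n+1} \setminus B_n$ finite, and $\av \notind_{B_n} B_{n+1}$, pick such a pair; otherwise stop. By Lemma~\ref{lem:forking-chain}, each fixed finite $\av \subseteq A'$ contributes fewer than $\kappa_0$ such steps (the corresponding types $\tp(\av/B_{n_k})$ form a strictly forking chain), so the iteration halts at some small $B_{n^*}$. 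By construction and Finite Character, $A' \ind_{B_{n^*}} E$ for every small $E$ with $B_{n^*} \subseteq E \subseteq B$. Now apply $A \ind_C D$ to $A'$, $B_{n^*}$, $D'$ (using $B_{n^*} \subseteq B \subseteq C$) to obtain $C_1$ small in $C$ with $B_{n^*} \subseteq C_1$ and $A' \ind_{C_1} D'$; then apply $A \ind_B C$ to $A'$, $B_{n^*}$, $C_1$ to obtain $B_1$ small in $B$ with $B_{n^*} \subseteq B_1$ and $A' \ind_{B_1} C_1$. The stopping property applied to $B_1$ gives $A' \ind_{B_{n^*}} B_1$; combined with $A' \ind_{B_1} C_1$ via small-set Transitivity along $B_{n^*} \subseteq B_1 \subseteq B_1 \cup C_1$, this yields $A' \ind_{B_{n^*}} B_1 \cup C_1$, whence $A' \ind_{B_{n^*}} C_1$. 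Since $B_{n^*} \subseteq C_1$, one last application of small-set Transitivity with $A' \ind_{C_1} D'$ delivers $A' \ind_{B_{n^*}} D'$, and $B'' := B_{n^*}$ is the required witness.

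Local Character for large sets follows from the same forking iteration, applied now to a small $A$ and the large $B$: stop when no finite $\av \subseteq A$ forks over the current $B_n$ into any small enlargement in~$B$. By definition, this yields $A \ind_{B_0} B$ in the large sense; counting fewer than $\kappa_0$ forking steps per finite $\av \subseteq A$ and keeping each step finite in size produces $|B_0| < \kappa_0 + |A|^+$, so the axiom holds with the same $\kappa_0$. The key technical point in both arguments is the appeal to Lemma~\ref{lem:forking-chain} to terminate the iteration at a small stage, since without strong Finite Character there is no direct way to transfer a chain of small-set independences across an unbounded increasing union of bases.
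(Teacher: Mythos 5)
Your approach matches the paper's, which proves only Local Character for large sets, by the same device: build a transfinite increasing chain of bases along which forking keeps occurring, and invoke Lemma~\ref{lem:forking-chain} to bound its length below~$\kappa_0$. You go further by explicitly writing out the other axioms, and in particular you correctly identify the only genuinely delicate one, the $(\Leftarrow)$ direction of Left Transitivity, where the naive attempt produces a $B_1$ and a $C_1$ that are not nested. Your fix -- first manufacturing a ``locally stable'' base $B_{n^*}$ (i.e.\ $A'\ind_{B_{n^*}}E$ for every small $E$ with $B_{n^*}\subseteq E\subseteq B$) and then routing both applications of the hypothesis through it -- is the right move, and the subsequent bookkeeping with small-set Transitivity and Normality is sound. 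The termination count ($<\kappa_0$ forking steps per finite $\av\subseteq A'$, hence a small total) is also the right argument, provided you run the iteration transfinitely with unions at limits, which you should state.

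The one place you underclaim the work is Finite Character. The $(\Rightarrow)$ direction is indeed immediate, but $(\Leftarrow)$ is not: if $A_0\ind_B C$ for every finite $A_0\subseteq A'$, then for each $A_0$ you get \emph{some} small witness $B''_{A_0}$, but these need not be nested, and neither their union nor base monotonicity alone produces a single $B''$ with $A'\ind_{B''}C'$ (base monotonicity only enlarges the base \emph{inside} the right-hand set, and $B''_{A_0}\not\subseteq C'$). The correct argument is exactly the one you already have for Transitivity: take the locally stable $B_0$ (containing $B'$) supplied by the iteration applied to $A'C'$ over~$B$; for each finite $A_0\subseteq A'$ the hypothesis yields $B''_{A_0}\supseteq B_0$ with $A_0\ind_{B''_{A_0}}C'$, while the stopping property gives $A_0\ind_{B_0}B''_{A_0}$; small-set Transitivity along $B_0\subseteq B''_{A_0}\subseteq B''_{A_0}C'$ then gives $A_0\ind_{B_0}C'$, and small-set Finite Character concludes $A'\ind_{B_0}C'$. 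So you should either prove Local Character first and then deduce FC $(\Leftarrow)$ from it, or flag that the same iteration handles it; ``immediate by unpacking the definition'' is not accurate.
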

\begin{proof}
Let us prove Local Character for non-small sets.
Let $A$ and $B$ be subsets of~$\monster$.
Assume, for contradiction, that, for every $C \subset B$, if $\card{C} <
\kappa_0$, then $A \notind C B$.
First, we consider the case when $A = \av$ is a finite tuple.
Define inductively a family of sets $(B_i: i < \kappa_0)$, such that:
\begin{enumerate}
\item 
each $B_i$ is a subset of $B$,
of cardinality strictly less than $\kappa_0$;
\item $(B_i: i < \kappa_0)$ is an increasing family of sets;
\item 
$\av \notind_{\bigcup_{j < i} B_j} B_{i}$.
\end{enumerate}
In fact, choose $B_0$ a finite subsets of~$B$,
such that  $\av \notind_{\emptyset} B_0$;
then, choose $B_i$ inductively, satisfying the given conditions.
Finally, let $p_i := \tp(\av / B_i)$.
Then, $(p_i: i < \kappa_0)$ is chain of forking extensions of
length~$\kappa_0$, contradicting Lemma~\ref{lem:forking-chain}.

If $A$ is infinite, proceed as in the proof of Remark~\ref{rem:local-char}.
\end{proof}

\begin{remark}\label{rem:localization}
Let $P$ be a (possibly, large) subsets of~$\monster$.
Let $\ind'$ the following relation on subsets of~$\monster$:
$A \ind'_C B$ iff $A \ind_{C P} B$.
Then, $\ind'$ satisfies 
Symmetry, Monotonicity, Base Monotonicity, Transitivity, Normality,
Finite Character and Local Character (with the same constant $\kappa_0$).
If moreover $P$ is $\aut(\monster)$-invariant (that is, $f(P) = P$ for every
$f \in \aut(\monster)$), then $\ind'$ also satisfies Invariance.
\end{remark}
\begin{proof}
Let us prove Local Character.
Let $A$ and $B$ be small subsets of~$\monster$.
Let $B_0 \subset B$ and $P_0 \subset P$, such that
$\card{B_0 \cup P_0} < \kappa_0 + \card{A}^+$ and
$A \ind_{B_0 P_0} B P$.
Then, $A \ind_{B_0 P} B$, and therefore $A \ind'_{B_0} B$.
\end{proof}

\begin{example}
Let $P \subset \monster$ be definable without parameters, and $\ind'$ be as in
the above remark.
It is not true in general that $\ind'$ is an independence relation: more
precisely, it might not satisfy the Extension axiom.
For instance, let $\pair{G, +}$ be a monster model of the theory of 
$\pair{\Q, +}$, and let
$\monster$ be the 2-sorted structure $G \sqcup G$, with the group structure on
the first sort, and the action by translation of the first sort on the second.
Notice that $\monster$ is $\omega$-stable; let $\ind$ be Shelah's forking
on~$\monster$, and $P$ be the first sort.
Choose $a$ and $b$ in the second sort arbitrarily.
Then, there is no $a' \elem a$ such that $a' \ind' b$, because
$\monster = \acl(Pb)$, but $\monster \neq \acl(P)$.
\end{example}

We will always consider models of $T$ as elementary substructures of $\monster$:
therefore, given $A \subset M \models T$, we can talk about $\mat(A)$ (a
subset of $\monster$, not of $M$!), and for $p \in S(M)$ we can define 
$p \ind_A M$. 

We say that $\ind$ is \intro{superior} if $\kappa_0 = \omega$.

\begin{remark}
$\ind$ is superior iff, for every finite set $A \subset \monster$ 
and every set $C
\subseteq \monster$, there exists a finite subset $C_0 \subset C$ such that
$A \ind_{C_0} C$.
Moreover, $\ind$ is superior iff $\nforkext$ is a well-founded partial
ordering, and therefore one can define a corresponding rank $\Uind$ 
for types (as in \cite[\S5.1]{wagner} for supersimple theories).
\end{remark}

\begin{example}\label{ex:independence-strict}
\begin{enumerate}
\item 
If $\monster$ is simple, we can take $\ind$ equal to Shelah's forking
$\ind[f]$.
$\kappa_0 \leq \card{T}^+$, and $\ind[f]$ is superior iff $\monster$ is
supersimple, and the rank induced by $\ind[f]$ is the Lascar rank~$\SU$.
\item
If $\monster$ is rosy, we can take $\ind$ equal to \th-forking
$\ind[\th]$.
$\ind[\th]$ is superior iff $\monster$ is superrosy.
\item
If $\monster$ is geometric, we can take $\ind$ equal to $\ind[$\acl$]$, the
independence relation induced by the algebraic closure.
$\ind$ is then superior of rank~1, and coincides with real \th-forking.

In all three cases, $\mat$ is the algebraic closure (that is, $\ind$ is \intro{strict}).
\end{enumerate}
\end{example}

\begin{example}\label{ex:independence-matroid}
If $\mat$ is an existential matroid on $\monster$, we can take $\ind$ equal to
the induced independence relation $\ind[$\mat$]$:
see \cite{fornasiero-matroids} for details; $\ind[$\mat$]$ is superior, of rank~1.
\end{example}

\begin{remark}
If $\monster$ is supersimple then $\ind$ is superior, and $\Uind \leq \SU$.
\end{remark}
\begin{proof}
Let $a \subset \monster$ be a finite tuple and $B \subset \monster$.
Let $B_0 \subset B$ be finite such that $a \ind[f]_{B_0} B$.
Then, by \cite[Remark~1.20]{adler}, $a \ind_{B_0} B$.
\end{proof}

Let $\Lang$ be the language of $T$.

Notice that the same structure $\monster$ could have more than one
independence relation.
The following example is taken from~\cite[Example~1.33]{adler}.
\begin{example}\label{ex:2}
Let $\Lang = \set{E(x,y)}$ and $\monster$ be the monster model where $E$ is an
equivalence relation with infinitely many equivalence classes, all infinite.
Then $\monster$ is $\omega$-stable, of Morley rank 2, and geometric.
Hence, $\monster$ has the following 2 independence relation: $\indf$ and
$\indacl$. 
More explicitly: $A \indacl_B C$ iff $A \cap C \subseteq B$, and
$A \indf_B C$ if $\acleq(AB) \cap \acleq(B C) = \acleq(B)$.
Both independence relations are superior,  strict, and satisfy Strong Finite
Character.
However, these two independence relations are different, and the rank of
$\monster$ is different according to the two different ranks:
$\U^{\acl}(\monster) = 1$ (the rank induced by $\acl$), while $\SU(\monster)
= 2$.
$\monster$ also has a third independence relation, which we will now describe.
For every $X \subset \monster$, let $\mat_E(X)$ be the set of elements which
are equivalent to some element in~$X$.
Define $A \ind[$E$]_C B$ if $\mat_E(AC) \cap \mat_E(BC) = \mat_E(C)$.
$\ind[$E$]$ is superior and satisfy Strong Finite Character, 
but it is not strict.
The rank of $\monster$ according to $\ind[$E$]$ is~$1$.
\end{example}

\begin{definition}
Let $A$ and $B$ be small subsets of~$\monster$ 
and $\pi(\x)$ be a partial type with parameters from~$A B$.
We say that $\pi$ does not fork over $B$ (and write $\pi \ind_B A$)
if there exists a complete type $q(\x) \in S_n(A B)$ containing $\pi$ 
and such that $q \ind_B A$  (or, equivalently, 
if there exists $\cv \in \monster^n$ realization of $\pi(\x)$ 
such that $\cv \ind_B A$).
\end{definition}
The interesting cases are when $\pi$ is either a single formula or a complete type over $A B$.

\begin{remark}
The above definition does not depend on the choice of the set of
parameters~$A$: that is, given some other subset $A'$ of $\monster$ 
such that the parameters of~$\pi$ are contained in $A' \cup B$, then
there exists $\cv \in \monster^n$ satisfying $\pi$
and such that $\cv \ind_B A$ iff
there exists  $\cv \in \monster^n$ satisfying $\pi$ and
such that $\cv \ind_B A'$.
\end{remark}
\begin{proof}
Assume that $\pi \ind_B A$.
Let $\cv \in \monster^n$ such that $\cv$ satisfies $\pi$ and
$\cv \ind_B A$.
Let $\cv' \in \monster^n$ such that $\cv' \elem_{A B} \cv$ and
$\cv' \ind_{AB} A'$. 
Therefore, $\cv'$ realizes $\pi$ and $\cv' \ind_B A$, and thus
$\cv' \ind_ B A'$.
\end{proof}


\subsection{The closure operator cl}

For this subsection, $A$, $A'$, $B$, and $C$ will always be subsets of
$\monster$, and $a$, $a'$ be elements of~$\monster$.

\begin{remark}[Invariance]\label{rem:mat-invariance}
Assume that $a \in \mat(B)$ and $a' B' \equiv a B$; then $a' \in \mat(B')$.
\end{remark}

\begin{remark}\label{rem:mat-ind}
If $a \in \mat(A)$ then $a \ind_A B$.
\end{remark}

\begin{proof}
Suppose $a\in \mat(A)$. 
By definition, $a \ind_A a$.
Moreover, $a \ind_{A a} B$.
Thus, by transitivity, $a \ind_A B$.
\end{proof}

\begin{remark}[Monotonicity]\label{rem:mat-monotone}
If $a \in \mat(B)$, then $a \in \mat(B C )$.
\end{remark}
\begin{proof}
By Remark~\ref{rem:mat-ind}, $a \ind_B B C a$.
Thus, $a \ind_{B C} a$.
\end{proof}

\begin{remark}
If $A \subseteq \mat(B)$, then $A \ind_B A$.
\end{remark}
\begin{proof}
\Wlog, $A = \pair{a_1, \dotsc, a_n}$ is a finite tuple.
By repeated application of Remarks~\ref{rem:mat-ind} and~\ref{rem:mat-monotone}, we have
$B A \ind_B B a_1$, $B A \ind_{B a_1} B a_1 a_2$, \dots.
Hence, by transitivity, $A \ind_B A$.
\end{proof}

\begin{remark}
If $A \subseteq \mat(B)$, then $A \ind_B C$.
\end{remark}
\begin{proof}
By the above remark, $A \ind_B A$.
Moreover, $A \ind_{AB} C$, and therefore, by transitivity, $A \ind_B C$.
\end{proof}

\begin{remark}\label{rem:mat-idempotent}
$A \ind_B C$ iff $A \ind_{\mat(B)} C$.
Therefore, $\ind_B$ and $\Uind(\cdot / B)$ depend only on $\mat(B)$, and not on~$B$.
\end{remark}

\begin{proposition}
$\mat$ is an invariant closure operator.
If moreover $\ind$ is superior, then $\mat$ is finitary.
\end{proposition}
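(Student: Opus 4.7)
The plan is to check each property of a closure operator in turn, drawing on the remarks already assembled in this subsection, and then handle the finitary property under superiority separately.

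\emph{Invariance} is immediate from Remark~\ref{rem:mat-invariance}. \emph{Monotonicity} ($A \subseteq B \Rightarrow \mat(A) \subseteq \mat(B)$) is exactly Remark~\ref{rem:mat-monotone}, applied element by element. \emph{Idempotence} ($\mat(\mat(A)) = \mat(A)$) follows from Remark~\ref{rem:mat-idempotent}, whose ``iff'' specialises, with both outer arguments equal to $\{a\}$ and base equal to $A$, to $a \ind_A a \Leftrightarrow a \ind_{\mat(A)} a$, i.e.\ $a \in \mat(A) \Leftrightarrow a \in \mat(\mat(A))$.

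\emph{Extensivity} ($A \subseteq \mat(A)$) is the only clause not packaged as a preceding remark, but it is a short derivation from the axioms. Local Character applied with an empty left argument (combined with the ``base change'' direction of Transitivity) gives $\emptyset \ind_A A$; Normality promotes this to $A \ind_A A$; Finite Character on the left then gives $\{a\} \ind_A A$ for every $a \in A$, and Symmetry together with Finite Character on the new left side yields $\{a\} \ind_A \{a\}$, i.e.\ $a \in \mat(A)$.

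For the \emph{finitary} property, assume $\ind$ is superior ($\kappa_0 = \omega$) and fix $a \in \mat(A)$, so that $a \ind_A a$. By Remark~\ref{rem:mat-ind} (with $B := A \cup \{a\}$) we have $a \ind_A (A \cup \{a\})$. Superiority applied to the finite tuple $a$ yields a finite $A_0 \subseteq A$ with $a \ind_{A_0} A$. Transitivity along $A_0 \subseteq A \subseteq A \cup \{a\}$ then promotes the two pieces to $a \ind_{A_0} (A \cup \{a\})$, and monotonicity on the right side gives $a \ind_{A_0} a$, i.e.\ $a \in \mat(A_0)$. The reverse inclusion is by the monotonicity of $\mat$ already established.

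The only real obstacle is the finitary step, where one must carefully combine the superiority estimate with a Transitivity chain to pass from $a \ind_A a$ over the whole set $A$ down to $a \ind_{A_0} a$ over a finite subset; the closure-operator axioms themselves amount to bookkeeping on top of the preceding remarks.
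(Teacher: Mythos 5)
Your proof is correct and takes essentially the same route as the paper's, reading off invariance, monotonicity, and idempotence from Remarks~\ref{rem:mat-invariance}, \ref{rem:mat-monotone}, and~\ref{rem:mat-idempotent}, and using superiority to produce a finite $A_0 \subseteq A$ with $a \ind_{A_0} A$. You simply fill in two steps the paper leaves implicit: the derivation of extensivity (which the paper dismisses as clear, and which also follows directly from the Extension axiom since $a' \equiv_A a$ with $a \in A$ forces $a' = a$) and the transitivity chain promoting $a \ind_{A_0} A$ together with $a \ind_A a$ to $a \ind_{A_0} a$.
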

\begin{proof}
The fact that $\mat$ is invariant is Remark~\ref{rem:mat-invariance}.
To prove that $\mat$ is a closure operator, we have to show:
\begin{description}
\item[Extension]
$A \subseteq \mat (A)$.\\
This is clear.
\item[Monotonicity]
$A \subseteq B$ implies $\mat(A) \subseteq \mat(B)$.\\
This follows from Remark~\ref{rem:mat-monotone}.
\item[Idempotency]
$\mat(\mat(A)) = \mat(A)$.\\
This follows from Remark~\ref{rem:mat-idempotent}.
\end{description}
Finally, we have to prove that if $\ind$ is superior, then $\mat$ is finitary.
Let $a \in \mat(B)$, that is $a \ind_B a$.
Since $\ind$ is superior, there exists $B_0 \subseteq B$ finite, such that $a \ind_{B_0} B$.
\end{proof}

\begin{conjecture}
$\mat$ is always finitary.
Moreover, $\mat$ is  definable: that is, for every $A \subseteq \monster$ small, the set
$\mat(A)$ is ord-definable over~$A$.
\end{conjecture}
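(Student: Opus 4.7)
The plan is to split the two claims and attack each through the only machinery available in this section, namely Local Character plus Transitivity for the first, and an invariance-plus-compactness argument for the second. For finitarity: given $a \in \mat(A)$, i.e., $a \ind_A a$, I would first recover the argument of the preceding proposition. Local Character applied to the singleton $\set{a}$ gives $B_0 \subseteq A$ with $\card{B_0} < \kappa_0$ and $a \ind_{B_0} A$. Combined with $a \ind_A a$ (which, by Normality and Symmetry, upgrades to $a \ind_A Aa$), Transitivity along $B_0 \subseteq A \subseteq Aa$ yields $a \ind_{B_0} Aa$, and in particular $a \ind_{B_0} a$, so $a \in \mat(B_0)$. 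When $\ind$ is superior, $B_0$ is already finite; the actual content of the conjecture is the passage from ``$\card{B_0} < \kappa_0$'' to ``$B_0$ finite'' in the general case.

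To make that passage, I would impose Strong Finite Character as a working hypothesis (noting that all examples in~\S\ref{sec:preliminary} satisfy it) and argue by contradiction. Assume $a \notin \mat(A_1)$ for every finite $A_1 \subseteq A$. Then for every such $A_1$, Strong Finite Character produces a formula $\phi_{A_1}(x, \bv_1, z)$ with $\bv_1 \in A_1$, realized by $(a, \bv_1, a)$, and with the property that any realization witnesses $\notind_{A_1}$. I would then try to assemble these formulas into a strictly increasing chain of finite subsets $A^0 \subsetneq A^1 \subsetneq \cdots$ of $B_0$ producing a chain of forking extensions $\tp(a/A^0) \forkext \tp(a/A^1) \forkext \cdots$ of length $\kappa_0$, contradicting Lemma~\ref{lem:forking-chain}. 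At each step one must select $A^{i+1}$ so that $\tp(a/A^{i+1})$ genuinely forks over $A^i$; Strong Finite Character gives local formulas witnessing $a \notind_{A^i} a$, and the goal is to promote one of these to a witness of $a \notind_{A^i} A^{i+1}$ by absorbing enough new parameters.

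For ord-definability, once finitarity is in hand, it reduces to showing each $\mat(A_0)$ with $A_0$ finite is ord-definable over $A_0$. Invariance of $\ind$ (Remark~\ref{rem:mat-invariance}) immediately gives $\mat(A_0)$ setwise invariant under $\aut(\monster/A_0)$, so it is a union of complete types over $A_0$. Under Strong Finite Character the stronger claim holds: $\monster \setminus \mat(A_0) = \set{c : c \notind_{A_0} c}$ is $\bigvee$-definable over $A_0$, so $\mat(A_0)$ itself is cut out by a partial type over $A_0$. Extending by finitarity, $\mat(A) = \bigcup \set{\mat(A_0) : A_0 \subseteq A \text{ finite}}$ is $\bigvee$-type-definable over $A$, which is the desired form of ord-definability.

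The main obstacle is clearly the shrinking of the base from cardinality $< \kappa_0$ to finite. The Local Character axiom only guarantees the former, and the Finite Character axiom acts on the left argument of $\ind$, not the base. Without superiority there is no well-founded forking rank to short-circuit the argument, and in the absence of Strong Finite Character I do not see any mechanism in Adler's axioms that would force $\mat$ to be finitary; it is plausible that the conjecture requires Strong Finite Character as a hypothesis, and that in full generality a counterexample can be constructed by tailoring a non-superior $\ind$ whose induced $\mat$ genuinely depends on infinitely many parameters. Correspondingly, the definability clause rises and falls with the finitary clause, since without finitarity one cannot hope to reduce to a single formula.
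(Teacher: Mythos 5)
This statement is labelled a \emph{conjecture} in the paper: no proof is given, and the author does not even claim it is true in full generality. So there is no proof to compare yours against, and you should not have expected to close the argument from the axioms alone.

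That said, your analysis is essentially the right diagnosis of the problem. The first paragraph is a correct and clean derivation that, for any $a \in \mat(A)$, Local Character plus Transitivity yields $B_0 \subseteq A$ with $\card{B_0} < \kappa_0$ and $a \in \mat(B_0)$; this recovers the superior case (where $\kappa_0 = \omega$) and correctly isolates the gap as the passage from ``$<\kappa_0$'' to ``finite.'' You are also right that Finite Character only acts on the left argument, not the base, so Adler's axioms give no leverage there. Two caveats on the more speculative parts. First, your Strong Finite Character attack on the ord-definability clause has a hidden gap: given $c \notind_{A_0} c$, the formula $\phi(x, \bv, z)$ produced by Strong Finite Character (with $\bv \in A_0$ and $z$ instantiated by $c$) only guarantees that any $c'$ with $\monster \models \phi(c', \bv, c)$ satisfies $c' \notind_{A_0} c$, \emph{not} $c' \notind_{A_0} c'$; the base and the right-hand side both involve $c$, and you cannot simply diagonalize. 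So the claimed $\bigvee$-definability of $\monster \setminus \mat(A_0)$ over $A_0$ does not follow as stated. Second, the chain-of-forking-extensions argument for finitarity under Strong Finite Character needs, at each step, a witness that $\tp(a/A^{i+1})$ forks over $A^i$, and as you note Strong Finite Character only hands you a witness of $a \notind_{A^i} a$; promoting that to $a \notind_{A^i} A^{i+1}$ is exactly the missing idea. Your closing suggestion that the conjecture may need an extra hypothesis such as Strong Finite Character, or may fail outright for a tailored non-superior $\ind$, is a reasonable reading of why the author left it open.
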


\begin{remark}
If $A \ind_B C$, then $\mat(A B) \cap \mat(B C) = \mat(B)$ \rom(but the converse is not true in general).
\end{remark}


\section{Lovely pairs}

Let $P$ be a new unary predicate symbol, and
$\Ttwo$ be the theory of all possible expansions of $T$ to the language 
$\Ltwo := \Lang \cup \set P$.
We will use a superscript $1$ to denote model-theoretic notions for~$\Lang$,
and a superscript $2$ to denote those notions for $\Ltwo$: for instance, we
will write $a \equiv^1_C a'$ if the $\Lang$-type of $a$ and $a'$ over $C$ is
the same, or $S^2_n(A)$ to denote the set of complete $\Ltwo$-types in $n$ variables over~$A$.

Let $\M_P = \pair{M, P(M)} \models \Ttwo$.
Given $A \subseteq M$, we will denote $P(A) := P(M) \cap A$.
We will write $\Mp := \pair{\monster, P}$ for a monster model of $\Ttwo$.

\begin{definition}\label{def:lovely}
Fix some small cardinal $\kappa > \max(\kappa_0, \card{T})$;
we will say that $A$ is \intro{very small} if it is of cardinality much
smaller than~$\kappa$.
We say that $M_P$ is a $\ind$-lovely pair for $T$ (or simply a lovely pair if
$\ind$ and $T$ are clear from the context) if it satisfies the following
conditions:
\begin{enumerate}
\item 
$M$ is a small model of~$T$.
\item (Density property) Let $A \subset M$ be very small, and $q \in S^1_1(A)$
be a complete $\Lang$-1-type over $A$.  Assume that $q \ind_{P(A)} A$.  Then,
$q$ is realized in $P(M)$.
\item (Extension property) Let $A \subset M$ be very small and $q \in
S^1_1(A)$.  Then, there exists $b \in M$ realizing $q$ such that $b \ind_A P(M)$.
\end{enumerate}
\end{definition}
 
The above definition is the natural extension of  Definition~3.1
in~\cite{BPV} to the case when $\ind \neq \ind[f]$.
If we need to specify the cardinal~$\kappa$, we talk about $\kappa$-lovely pairs.

\begin{remark}
If $M_P$ satisfies the Extension property, then $M$ is $\kappa$-saturated (as
an $\Lang$-structure).
\end{remark}

\begin{definition}
Let $X \subseteq M \models T$.
The closure of $X$ in $M$ is 
\[
\mat^M(X) \coloneqq M \cap \mat(X).
\]
$X$ is \intro{closed} in $M$ if $\mat^M(X) = X$.
\end{definition}


\begin{remark}\label{rem:density-substructure}
If $M_P$ satisfies the Density property, then 
\begin{enumerate}
\item $P(M) \preceq M$;
\item
$P(M)$ is closed in $M$; 
\item
$P(M)$ is $\kappa$-saturated (as an $\Lang$-structure).
\end{enumerate}
\end{remark}
\begin{proof}

Let $\pv \in P(M)^n$ and $\phi(x, \y)$ be some $\Lang$-formula.
Assume that there exists $a \in M$ such that $M \models \phi(a, \pv)$.
Let $q := \tp^1(c/\pv)$.
Since $P(\pv) = \pv$, we have $q \ind_{P(\pv)} \pv$, and therefore there
exists $a' \in P(M)$ such that $M \models \phi(a', \pv)$, proving that
$P(M) \preceq M$.

Assume that $a \ind_{P(M)} a$.
Let $P_0 \subset P(M)$ be very small, such that $a \ind_{P_0} a$.
Let $q := \tp^1(a/ P_0 a)$; notice that $q \ind_{P_0} P_0 a$; therefore, by
the Density property, $q$ is realized in $P(M)$, that is $a \in P(M)$.

Assume that $A \subseteq P(M)$ is a very small subset and let
$q \in S^1_1(A)$.
Since $P(A) = A$, we have $q \ind_{P(A)} A$, and therefore $q$ is realized
in~$P(M)$, proving saturation. 
\end{proof}

\begin{example}
If $\ind$ is the trivial independence relation (that is, $A \ind_B C$ for
every $A$, $B$, and $C$), then $M_P$ is a lovely pair iff $M \models T$ is
sufficiently saturated and $P(M)= M$ (more precisely, the Extension Property
always hold, and the Density Property holds iff $M = P(M)$).
\end{example}

\begin{example}\label{ex:triple-love}
Let $T$ and $\monster$ be as in Example~\ref{ex:2}.
The 3 choices for an independence relation $\ind$  on $\monster$
will correspond to 3 different kind of lovely pairs; in particular, the theory
of lovely pairs for $T$ as a simple theory (which is equal to the theory of
Beautiful Pairs for $T$ \cite{poizat83}) is different from the theory of
lovely pairs for $T$ as a geometric theory. 

More explicitly, let $M_P \models \Ttwo$, and assume that $\M$ is sufficiently saturated.
Then, $M_P$ is a model of the theory of $\indf$-lovely pairs iff:
\begin{enumerate}
\item 
infinitely many equivalence classes are disjoint from~$P(M)$;
\item infinitely many equivalence classes intersect~$P(M)$;
\item for each equivalence class $C$, both $C \setminus P(M)$ and $C \cap
P(M)$ are infinite. 
\end{enumerate}
On the other hand, $M_P$ is a model of the theory of $\indacl$-lovely pairs
iff:
\begin{enumerate}
\item all equivalence classes intersect~$P(M)$;
\item for each equivalence class $C$, both $C \setminus P(M)$ and $C \cap P(M)$ are infinite.
\end{enumerate}
Finally, $M_P$ is a model of $\ind[$E$]$-lovely pairs iff:
\begin{enumerate}
\item 
infinitely many equivalence classes are disjoint from~$P(M)$;
\item infinitely many equivalence classes are contained in~$P(M)$;
\item every equivalence class is either disjoint or contained in~$P(M)$.
\end{enumerate}
\end{example}


\subsection{Existence}
\label{subsec:existence}

\begin{lemma}\label{lem:independent-model}
Let $A$, $B$, and $C$ be small subsets of~$\monster$.
Assume that $A \ind_C B$.
Then, there exists $N \prec \monster$ small model, such that
$A C \subseteq N$ and $N \ind_C B$.
In particular, for any $B$ and $C$ small subsets of~$\monster$, there exists
$N \prec \monster$ small model, such that $C \subseteq N$ and $N \ind_C B$.
\end{lemma}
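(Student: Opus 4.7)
The plan is to obtain $N$ by first choosing any small elementary submodel of $\monster$ that contains $AC$, then perturbing it with the Extension axiom to secure independence from~$B$ over~$AC$; the hypothesis $A \ind_C B$ is subsequently used to strengthen independence over $AC$ to independence over~$C$. In detail, I would apply downward L\"owenheim--Skolem to obtain a small $N_0 \prec \monster$ with $AC \subseteq N_0$, enumerate it as a small tuple $\bar n$ whose initial segment lists $AC$, and apply Extension to $\bar n$ with base $AC$ and right-hand side $B$. This produces $\bar n' \equiv_{AC} \bar n$ whose underlying set $N$ satisfies $N \ind_{AC} B$; and since $AC$ appears at the start of $\bar n$, the tuple-equivalence forces $AC \subseteq N$.

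To see that $N \prec \monster$, I would invoke the strong homogeneity of $\monster$: the equivalence $\bar n \equiv_{AC} \bar n'$ is realized by some $\sigma \in \aut(\monster/AC)$ with $\sigma(\bar n) = \bar n'$, so $N = \sigma(N_0) \prec \monster$. It then remains to upgrade $N \ind_{AC} B$ to $N \ind_C B$. From $A \ind_C B$, Normality yields $AC \ind_C B$, and Symmetry gives $B \ind_C AC$; from $N \ind_{AC} B$, Symmetry gives $B \ind_{AC} N$. Since $C \subseteq AC \subseteq N$, left-Transitivity applied to $B$ with the nesting $C \subseteq AC \subseteq N$ combines these into $B \ind_C N$, and a final Symmetry gives $N \ind_C B$. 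The ``in particular'' clause is the case $A = \emptyset$: here $AC = C$, so the application of Extension already delivers $N \ind_C B$ directly and the combination step is not needed.

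The step I expect to be the main obstacle is the verification in the second paragraph that $N$ is a genuine elementary submodel of $\monster$, not merely an elementarily equivalent copy: the Extension axiom only produces a tuple with a prescribed type over $AC$, and it is the monster-homogeneity argument that promotes this to an automorphic image of $N_0$, hence to an elementary submodel. Beyond that point, the remainder is the short symmetry/normality/transitivity chain above, together with the observation that the in-particular clause is a strictly easier subcase.
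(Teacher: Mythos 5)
Your proof is correct and follows the same route as the paper: take a small elementary submodel containing $AC$, apply the Extension axiom over $AC$ to move it to an $AC$-conjugate $N \prec \monster$ with $N \ind_{AC} B$, and then upgrade to $N \ind_C B$ using $A \ind_C B$ together with Normality, Symmetry and Transitivity. You have simply spelled out steps the paper's terse one-line argument leaves implicit (the homogeneity argument giving $N \prec \monster$ and the forking-calculus chain), and you correctly insist the initial submodel contain $AC$ rather than only $A$, which repairs a small slip in the paper's text.
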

\begin{proof}
Let $N' \prec N$ be a small model containing~$A$.
Choose $N \elem^1_{AC} N'$ such that $N \ind_C B$ ($N$ exists by the Extension Axiom).
\end{proof}
Modifying the above proof a little, we can see that, for any small
cardinal~$\kappa$, in the above lemma we can additionally require that $N$ is
$\kappa$-saturated, or that $\card N \leq \card A + \card T$, etc.

Fix $\kappa > \kappa_0$ a small cardinal.
\begin{lemma}\label{lem:lovely-existence}
$\kappa$-lovely pairs exist.
If $\pair{C, P(C)}$ is an $\Ltwo$-structure, where $P(C) \subseteq C \subset
\monster$, $C$~is small, and $P(C)$ is closed in~$C$ 
\rom(\ie, $\mat(P(C)) \cap C = P(C)$\rom), then there exists a $\kappa$-lovely pair
$M_P$, such that $\pair{C, P(C)}$ is an $\Ltwo$-substructure of $M_P$, and
$P(M) \ind_{P(C)} C$.
\end{lemma}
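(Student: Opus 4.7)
The plan is to construct $M_P$ by a transfinite recursion of length a suitably large regular cardinal $\lambda \geq \kappa$, producing an increasing chain $\langle M_\alpha, P_\alpha\rangle_{\alpha < \lambda}$ of small $\Ltwo$-substructures of $\Mp$ extending $\pair{C, P(C)}$, and taking $M := \bigcup_{\alpha < \lambda} M_\alpha$ and $P(M) := \bigcup_{\alpha < \lambda} P_\alpha$. The inductive invariants are: \rom{(i)} $M_\alpha \prec \monster$ is small, $P_\alpha \subseteq M_\alpha$, and $P(C) \subseteq P_\alpha$; \rom{(ii)} $P_\alpha \ind_{P(C)} C$. By Finite Character, (ii) passes to unions, giving $P(M) \ind_{P(C)} C$ at the end.

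At successor stages one book-keeping task is performed for a pair $(A, q)$ with $A \subset M_\alpha$ very small and $q \in S^1_1(A)$. \textbf{Density task} (when $q \ind_{P_\alpha \cap A} A$): by hypothesis pick $b_0 \models q$ with $b_0 \ind_{P_\alpha \cap A} A$; by Extension take $b_1 \equiv_A b_0$ with $b_1 \ind_A P_\alpha$ (Invariance preserves $b_1 \ind_{P_\alpha \cap A} A$), so Transitivity gives $b_1 \ind_{P_\alpha \cap A} A \cup P_\alpha$, hence base monotonicity yields $b_1 \ind_{P_\alpha} A \cup P_\alpha$; a further application of Extension furnishes $b \equiv_{A P_\alpha} b_1$ with $b \ind_{A P_\alpha} M_\alpha \cup C$, whence Transitivity gives $b \ind_{P_\alpha} M_\alpha \cup C$, and in particular $b \ind_{P_\alpha} C$. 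Using invariant (ii), Symmetry and Transitivity along $P(C) \subseteq P_\alpha \subseteq P_\alpha \cup \{b\}$ yield $P_\alpha \cup \{b\} \ind_{P(C)} C$. Then set $P_{\alpha+1} := P_\alpha \cup \{b\}$ and enlarge $M_\alpha \cup \{b\}$ to a small $M_{\alpha+1} \prec \monster$. \textbf{Extension task}: apply Extension to obtain $b \models q$ with $b \ind_A P_\alpha$; set $P_{\alpha+1} := P_\alpha$ and enlarge $M_\alpha \cup \{b\}$ to a small $M_{\alpha+1}$; the invariant is trivially preserved. Limit stages are handled by union.

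The main obstacle is the book-keeping: one must ensure that every pair $(A, q)$ arising in the final $M_P$ (with $A \subset M$ very small) is processed at some stage $\alpha$ where $A \subseteq M_\alpha$ \emph{and} $P(M) \cap A = P_\alpha \cap A$, so that the density hypothesis recorded at the time of processing matches the one in the statement of the Density property. This is done in the standard manner, mirroring the simple case of~\cite{BPV} and the construction of saturated models: choose $\lambda$ regular and sufficiently large compared to $\kappa$, $\card T$, and $\kappa_0$, enumerate potential tasks together with the stages at which their parameters first appear, and defer each task until both conditions above are met; here one uses that ``very small'' means of cardinality strictly below $\kappa$ together with the regularity of $\lambda$. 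Once this is arranged, $M_P$ satisfies the Extension and Density properties of Definition~\ref{def:lovely}, hence is $\kappa$-lovely; the remaining properties recorded in Remark~\ref{rem:density-substructure} follow. The unconditional existence of $\kappa$-lovely pairs is then the special case $\pair{C, P(C)} = \pair{\emptyset, \emptyset}$.
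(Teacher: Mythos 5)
The overall strategy — a transfinite recursion building an increasing chain of small $\Ltwo$-structures, with book-keeping to handle density and extension tasks one at a time — is in the same spirit as the paper's proof (which batches all types at each stage but is otherwise the same construction). However, there is a genuine gap: your stated invariants are too weak to verify the Extension Property at the end, and you assert that verification without giving it.

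Concretely, the issue is this. Your extension task at stage $\alpha$ produces $b \models q$ with $b \ind_A P_\alpha$, but the Extension Property for the final structure requires $b \ind_A P(M)$, where $P(M) = \bigcup_\beta P_\beta$ and $P_\beta \supsetneq P_\alpha$ for $\beta > \alpha$. Nothing in your stated invariants prevents elements added to $P$ at a later stage $\beta$ from destroying the independence $b \ind_A P_\beta$. What is needed is the chain invariant
\[
P_\beta \ind_{P_\alpha} M_\alpha \quad \text{for all } \alpha \leq \beta,
\]
which is exactly condition (c) in the paper's construction; your invariant (ii) is only its special case relative to the fixed base $\pair{C, P(C)}$. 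Once one has $P_\beta \ind_{P_\alpha} M_{\alpha+1}$ and $b \in M_{\alpha+1}$, $A \subseteq M_\alpha$, a short forking-calculus computation (symmetry, normality, base monotonicity, transitivity) does yield $b \ind_A P_\beta$, and Finite Character for large sets then gives $b \ind_A P(M)$ in the limit — but that argument has to be made, and it depends on the chain invariant.

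Interestingly, your density task does happen to produce the one-step form of the missing invariant: you choose the new element $p \in P_{\alpha+1}$ with $p \ind_{A P_\alpha} M_\alpha \cup C$ and deduce $p \ind_{P_\alpha} M_\alpha \cup C$, whence $P_{\alpha+1} \ind_{P_\alpha} M_\alpha$. But the full chain invariant $P_\beta \ind_{P_\alpha} M_\alpha$ does not follow trivially from the one-step version; it requires its own induction on $\beta$ (base case trivial, successor step via transitivity along $P_\alpha \subseteq P_\beta \subseteq P_{\beta+1}$, limit step by Finite Character). None of this appears in your write-up, and it is precisely the content that makes the construction work. To fix the proposal, promote invariant (ii) to $P_\beta \ind_{P_\alpha} M_\alpha$ for all $\alpha \leq \beta$ (with the $C$-relative condition as the special case $\alpha = 0$), verify it at each step of the recursion, and then use it to verify the Extension Property explicitly in the final structure.
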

Therefore, given $A$ and $B$ small subsets of $\monster$, there exists
a $\kappa$-lovely pair $M_P$ such that $P(M) \ind_A B$ and $A \subseteq P(M)$.
\begin{proof}
The proof of~\cite[Lemma~3.5]{BPV} works also in this situation, with some
small modification.
Here are some details.
If $\pair{C, P(C)}$ is not given, define $C = P(C) = \emptyset$.
Construct a chain $\Pa{\pair{M_i, P(M_i)}: i < \kappa^+}$ of small
subsets of $\monster$, such that:

\begin{enumerate}[(a)]
\item $\pair{A, P(A)}$ is an $\Ltwo$-substructure of $\pair{M_0, P(M_0)}$, 
and $M_0 \ind_{P(A)} P(M_0)$;
\item
for every $i < \kappa^+$, any complete $\Lang$-type over $M_i$%
\footnote{Here we can choose: either in one variable, or in finitely many
  variables, or in a small number of variables: see \S\ref{sec:tuples}.}, 
which does not fork over $P(M_i)$ is realized in $P(M_{i+1})$; 
\item
$i < j < \kappa^+$ implies that $\pair{M_i, P(M_i)}$ is an $\Ltwo$-substructure
of $\pair{M_j, P(M_j)}$, and $P(M_j) \ind_{P(M_i)} M_i$;
\item
for $i$ successor, $M_i$ is a ($\kappa + \card{P(M_i)})^+$-saturated model
of~$T$.
\end{enumerate}

First, we construct $\pair{M_0, P(M_0)}$.
Let $M_0 \prec \monster$ be a small model containing $A$, and define
$P(M_0) := P(A)$.
Notice that $A \ind_{P(A)} P(M_0)$.

Given $\pair{M_i, P(M_i)}$, let
$\Pa{p_j: j < \lambda}$ to be an enumeration of all
$\Lang$-types over~$M_i$,%
\footnote{With the same meaning of ``type'' as in (b).}
such that $p_j \ind_{P(M_i)} M_i$.
Let
\begin{itemize}
\item 
$a_0$ be any realization of $p_0$ (in $\monster$);
\item
$a_1$ be a realization of $p_1$ such that 
$a_1 \ind_{M_i} a_0$;
\item \dots
\item
for every $j < \lambda$, let $a_j$ be a realization of $p_j$ such that
$a_j \ind_{M_i} \Pa{a_k: k < j}$.
\end{itemize}
Define $A := \Pa{a_j: j < \lambda}$.
It is then easy to see that $A \ind_{P(M_i)} M_i$.
Conclude the proof as in~\cite{BPV} (using Lemma~\ref{lem:independent-model} where necessary).
\end{proof}

\subsection[The back-and-forth argument]{Uniqueness: the back-and-forth argument}
\label{subsec:uniqueness}

In this subsection, $M_P$ will be a lovely pair.
The following definition is from~\cite{BPV}.
\begin{definition}
\begin{enumerate}
\item 
A set $A \subset M$ is \intro{\Pindependent{}} if $A \ind_{P(A)} P(M)$ (and
similarly for tuples).
\item
Given a possibly infinite tuple $\av$ from~$M$, $\Ptp(\av)$, the \Ptype 
of~$\av$, is the information which tell us which members of $\av$ are 
in~$P(M)$.
\end{enumerate}
\end{definition}

\begin{remark}
Let $A \subseteq M$ be \Pindependent.
Then $\mat(A) \cap P(M) = \mat^M  \Pa{P(A)}$.
\end{remark}
\begin{proof}
Let $c \in \mat(A) \cap P(M)$.
Then $c\ind_AP(M)$, by Remark 1.2, and $A \ind_{P(A)} P(M)$. Therefore $c\ind_{P(A)}P(M)$. Therefore $c\ind_{P(A)}c$, that is $c \in \mat\Pa{P(A)}$.
\end{proof}

\begin{proposition}
\label{prop:back-and-forth}
Let $M_P$ and $M'_P$ be two $\ind$-lovely pairs for~$T$.
Let $\av$ be a \Pindependent very small tuple from~$M$,
and $\av'$ be a \Pindependent tuple of the same length from~$M'$.
If $\av \elem^1 \av'$ and $\Ptp(\av) = \Ptp(\av')$, then
$\av \elem^2 \av'$.
\end{proposition}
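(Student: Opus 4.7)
The plan is a back-and-forth argument on the family $\Ffam$ of all partial maps $f\colon\bv\mapsto\bv'$ with $\bv\subset M$ and $\bv'\subset M'$ very small tuples of the same length, where $f$ is partial $\Lang$-elementary, both $\bv$ and $\bv'$ are $P$-independent, and $\Ptp(\bv)=\Ptp(\bv')$.  The hypotheses place the map $\av\mapsto\av'$ inside $\Ffam$.  I aim to show that $\Ffam$ is a back-and-forth system between $M_P$ and $M'_P$: for every $f\in\Ffam$ and every $c\in M$, some extension of $f$ in $\Ffam$ has $c$ in its domain, and symmetrically from the $M'$ side.  Once this is established, the standard induction on $\Ltwo$-formulas yields that every member of $\Ffam$ is partial $\Ltwo$-elementary, and in particular $\av\equiv^2\av'$.

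Fix $f\colon\bv\mapsto\bv'$ in $\Ffam$ and $c\in M$.  If $c\in P(M)$, let $q:=\tp^1(c/\bv)$; $P$-independence of $\bv$ gives $\bv\ind_{P(\bv)}P(M)$, and since $c\in P(M)$ base monotonicity plus symmetry yields $q\ind_{P(\bv)}\bv$.  Transporting $q$ across $f$ to $q'\in S^1_1(\bv')$, invariance of $\ind$ preserves non-forking, so $q'\ind_{P(\bv')}\bv'$, and the density property in $M'_P$ yields $c'\in P(M')$ realising $q'$.  A short transitivity/normality computation starting from $\bv'\ind_{P(\bv')}P(M')$ and $c'\in P(M')$ shows $\bv'c'$ is $P$-independent, hence $f\cup\set{c\mapsto c'}\in\Ffam$.

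If instead $c\notin P(M)$, I first enlarge the map by a small $P$-tuple.  By local character, pick a very small $\bv_P\subseteq P(M)$ with $c\ind_{\bv\bv_P}P(M)$; base monotonicity shows $\bv\bv_P$ remains $P$-independent.  Applying the previous case iteratively, extend $f$ to $f_1\in\Ffam$ with domain $\bv\bv_P$, mapping $\bv_P$ to a tuple $\bv'_P\subseteq P(M')$.  Transport $q:=\tp^1(c/\bv\bv_P)$ across $f_1$ to $q'$ over $\bv'\bv'_P$ and apply the extension property in $M'_P$ to obtain $c'\in M'$ realising $q'$ with $c'\ind_{\bv'\bv'_P}P(M')$.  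Combining this with $\bv'\bv'_P\ind_{P(\bv'\bv'_P)}P(M')$ via transitivity and normality yields $\bv'\bv'_P c'\ind_{P(\bv'\bv'_P)}P(M')$, which is the $P$-independence of $\bv'\bv'_P c'$ provided $c'\notin P(M')$.  The main obstacle is verifying this last fact: if $c\notin\mat(\bv\bv_P)$ then invariance of $\mat$ gives $c'\notin\mat(\bv'\bv'_P)$, while $c'\in P(M')$ with $c'\ind_{\bv'\bv'_P}P(M')$ would force $c'\in\mat(\bv'\bv'_P)$---contradiction; if instead $c\in\mat(\bv\bv_P)$, then the remark $\mat(\bv\bv_P)\cap P(M)=\mat^M(P(\bv\bv_P))$ (using $P$-independence of $\bv\bv_P$), combined with $c\notin P(M)$, forces $c\notin\mat(P(\bv\bv_P))$, a condition preserved by the $\Lang$-elementary $f_1$, so $c'\notin\mat(P(\bv'\bv'_P))$, and the same remark on the $M'$ side excludes $c'\in P(M')$.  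This completes the back-and-forth step and the proof.
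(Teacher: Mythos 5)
Your proof is correct and follows the same back-and-forth strategy as the paper, with the same family of $\Lang$-elementary maps between very small P-independent tuples and the same split into the cases $c\in P(M)$ and $c\notin P(M)$ (density for the first, extension after padding with a small $P$-tuple for the second). The only cosmetic difference is in the final step of Case~2: where you split according to whether $c\in\mat(\bv\bv_P)$, the paper handles the check that $c'\notin P(M')$ uniformly by deriving $b'\in\mat(\av'\fv')$ from the remark $\mat(AB)\cap\mat(BC)=\mat(B)$ (applied to $b'\ind_{\av'\fv'}P(M')$), then intersecting with $P(M')$ and pulling back. Both routes work; yours trades one invocation of that remark for a case analysis and is no less rigorous. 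One small terminological slip: in Case~1 the step from $\bv\ind_{P(\bv)}P(M)$ and $c\in P(M)$ to $c\ind_{P(\bv)}\bv$ uses symmetry and Monotonicity, not Base Monotonicity.
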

\begin{proof}
Back-and-forth argument.
Let
\begin{multline*}
\Gamma := \bigl\{f: \av \to \av': \av \subset M,\ \av' \subset M',\
\av \et \av' \text{ very small},\ 
f \text { bijection},\\
\av \et \av' \text{ \Pindependent},\
\av \elem^1 \av',\
\Ptp(\av) = \Ptp(\av')\bigr\}.
\end{multline*}
We want to prove that $\Gamma$ has the back-and-forth property. Let $f:\bar{a}\rightarrow \bar{a}^\prime$ be in $\Gamma$. Let $b\in M$. 

\Case 1 Suppose $b\in P(M)$. Then, since $\bar{a}$ is \Pindependent,
$b\ind_{P(\bar{a})}\bar{a}$. Let $b^\prime\in M$ be such that
$\tp^1(\bar{a}b)=\tp^1(\bar{a}^\prime b^\prime)$. Then
$b^\prime\ind_{P(\bar{a}^\prime)}\bar{a}^\prime$. By density, we may assume
$b^\prime\in P(M)$. Then $\Ptp(\bar{a}b)= \Ptp(\bar{a}^\prime b^\prime)$. Since $b, b^\prime\in P(M)$, both $\bar{a}b$ and $\bar{a}^\prime b^\prime$ are \Pindependent. So the partial isomorphism has been extended.

\Case 2 Suppose $b\notin P(M)$. Let $\bar{f}$ be a very small tuple from $P(M)$ such that $\bar{a}b\bar{f}$ is \Pindependent. By repeated use of case 1, we obtain a small tuple $\bar{f}^\prime$ from $P(M')$ such that $\tp^1(\bar{a}\bar{f})=\tp^1(\bar{a}^\prime\bar{f}^\prime)$. Let $b^\prime \in M'$ be such that $\tp^1(\bar{a}b\bar{f})=\tp^1(\bar{a}^\prime b^\prime\bar{f}^\prime)$. By extension, we may assume $b^\prime\ind_{\bar{a}^\prime\bar{f}^\prime}P(M')$. It follows that $\bar{a}^\prime b^\prime \bar{f}^\prime$ is \Pindependent, and that $\mat(b' \av' \fv') \cap \mat(\av' \fv' P(M')) = \mat(\av' \fv')$.
Suppose $b^\prime\in P(M')$.
Then, $b' \in \mat(\av' \fv') \cap P(M')$.
Since $\av' \fv'$ is \Pindependent, $b' \in \mat(\fv' P(\av'))$.
Therefore, $b \in \mat^M \Pa{\fv P(\av)} \subseteq P(M)$, absurd.
So $\Ptp(\bar{a}b\bar{f}) = \Ptp(\bar{a}^\prime b^\prime\bar{f}^\prime)$. So the partial isomorphism has been extended.
\end{proof}

\begin{definition}
$\Td$ is the theory of $\ind$-lovely pairs (the ``$d$'' stands for ``dense'', a
legacy from the o-minimal case).
Given a property $\mathcal S$ of models of~$\Ttwo$,
we will say that ``$\mathcal S$ is first-order'' to mean that there is an
$\Ltwo$-theory $T'$ expanding~$\Ttwo$, such that every model of $\Ttwo$ with
the property $\mathcal S$ is a model of~$T'$, and every \emph{sufficiently
  saturated} model of $T'$ has the property~$S$.
In particular, by ``$\ind$-loveliness is first-order'' (or simply ``loveliness
is first-order'' when $\ind$ is clear from the context),  we will mean that
every sufficiently saturated model of $\Td$ is a lovely pair. 
\end{definition}

If $T$ is pregeometric, then $\indacl$-loveliness is first-order iff $T$ is
geometric (see \S~\ref{subsec:rank-one}).
\cite{BPV} and \cite{vassiliev05} investigate the question when $\indf$-loveliness is first-order for simple theories.
We will not study this question for~$\ind$, except for a partial result in 
Corollary~\ref{cor:low} .



\section[Near model completeness]{Near model completeness and other properties}
\label{sec:model-complete}

In this section we assume that loveliness is first order and that
$\Mp := \pair{\monster, P}$ is a monster model of~$\Td$.

\subsection{Near model completeness}

\begin{lemma}\label{lem:local-large}
For every finite tuple $\bar{a}$ from $\monster$ there is some small $C\subseteq P$ such that $\bar{a}\ind_{C^\prime}P$ for all $C^\prime \equiv^1_{\bar{a}}C$ such that $C^\prime\subseteq P$. 
\end{lemma}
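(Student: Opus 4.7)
The plan is to invoke the Local Character axiom to exhibit $C$, and then use Extension of $\ind$ together with the back-and-forth characterization in Proposition~\ref{prop:back-and-forth} to propagate $\bar a\ind_CP$ to $\bar a\ind_{C'}P$ across the $\Lang$-conjugates $C'$ of $C$ that lie inside $P$.

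First, Remark~\ref{rem:local-char} (Local Character for finite tuples) yields a small $C_0\subseteq P$ with $\card{C_0}<\kappa_0$ and $\bar a\ind_{C_0}P$. Set $C:=C_0\cup P(\bar a)$: then $C$ is still small, $C\subseteq P$, Base Monotonicity preserves $\bar a\ind_CP$, and $P(\bar a)\subseteq C$. Normality upgrades this to $\bar aC\ind_CP$, and since $P(\bar aC)=P(\bar a)\cup C=C$, this says that $\bar aC$ is \Pindependent in $\Mp$. I claim that this $C$ witnesses the lemma.

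Now let $C'\subseteq P$ with $C'\elem^1_{\bar a}C$. By $\Lang$-saturation of $\monster$, pick $\sigma\in\aut(\monster)$ fixing $\bar a$ pointwise with $\sigma(C)=C'$; since $\sigma$ pointwise fixes $\bar a$ and $P(\bar a)\subseteq C$, we get $P(\bar a)=\sigma(P(\bar a))\subseteq C'$, and Invariance yields $\bar a\ind_{C'}\sigma(P)$. To pass from here to $\bar a\ind_{C'}P$, I would construct an auxiliary tuple $\bar a^*\in\monster$ satisfying (i) $\bar a^*\elem^1_{C'}\bar a$, (ii) $\bar a^*\ind_{C'}P$, and (iii) $\Ptp(\bar a^*)=\Ptp(\bar a)$. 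Conditions (i) and (ii) come from iterated Extension of $\ind$. For (iii): positions $i$ with $a_i\in P$ lie in $P(\bar a)\subseteq C'$ and are therefore automatically realized by $a^*_i=a_i\in P$; for positions $i$ with $a_i\notin P$, the existence of $a^*_i\notin P$ realizing the appropriate $\Lang$-type and the independence condition is arranged by combining Extension with the Density property of the lovely pair, which governs whether a given type is realized inside or outside $P$.

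With such an $\bar a^*$, both $\bar aC$ and $\bar a^*C'$ are very small \Pindependent tuples in $\Mp$ (the latter by the same Normality/Transitivity calculation applied to $\bar a^*\ind_{C'}P$ together with $P(\bar a^*)\subseteq C'$ coming from (iii)), share the same $\Lang$-type via the composition of $\sigma$ with the $\Lang$-automorphism witnessing (i), and share the same \Ptype by (iii). Proposition~\ref{prop:back-and-forth} therefore gives $\bar aC\elem^2\bar a^*C'$ in $\Mp$, and by saturation this $\Ltwo$-equivalence is realized by a pair-automorphism $\rho$ of $\Mp$. The main obstacle I expect is arranging $\rho$ so that it fixes $C'$ pointwise and sends $\bar a^*$ to $\bar a$; once this is done, applying $\rho$ to $\bar a^*\ind_{C'}P$ together with $\rho(P)=P$ delivers $\bar a\ind_{C'}P$. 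This final identification requires circumventing an apparent circularity, since a direct application of back-and-forth to $\bar aC'$ versus $\bar a^*C'$ would demand that $\bar aC'$ be \Pindependent, which is equivalent to the lemma's conclusion; I would handle it either by a finer inductive choice of $\bar a^*$ or by a chain argument in the spirit of Lemma~\ref{lem:forking-chain}, extracting from the failure of the conclusion a $\kappa_0$-chain of forking extensions and thereby contradicting Local Character.
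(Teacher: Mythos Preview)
Your main line of argument has a genuine gap that you yourself identify but do not close. The pair-automorphism $\rho$ you obtain from Proposition~\ref{prop:back-and-forth} witnesses $\bar aC\elem^2\bar a^*C'$, so it sends $\bar a^*$ to $\bar a$ and $C'$ to $C$; applying it to $\bar a^*\ind_{C'}P$ just recovers $\bar a\ind_CP$, which you already had. To get an automorphism fixing $C'$ and sending $\bar a^*$ to $\bar a$, you would need $\bar aC'\elem^2\bar a^*C'$, and the only tool available for that (the back-and-forth) requires $\bar aC'$ to be \Pindependent --- exactly the conclusion. Your suggestion of ``a finer inductive choice of $\bar a^*$'' does not help: no choice of $\bar a^*$ can manufacture \Pindependence of $\bar aC'$, because $\bar a$ and $C'$ are fixed in advance. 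So the entire detour through $\sigma$, $\bar a^*$, and Proposition~\ref{prop:back-and-forth} produces nothing.

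Your final fallback --- a chain argument contradicting Lemma~\ref{lem:forking-chain} --- is precisely the paper's proof, and it is short and direct. Argue by contradiction: if no $C$ works, then for every small $C_\alpha\subseteq P$ there is some $C'\elem^1_{\bar a}C_\alpha$ with $C'\subseteq P$ and $\bar a\notind_{C'}P$; picking a small witness to the forking and absorbing $C'$ gives a small $C_{\alpha+1}\subseteq P$ with $\bar a\notind_{C_\alpha}C_{\alpha+1}$ (after moving back along the automorphism over $\bar a$) and such that $\tp^1(C_\alpha/\bar a)$ is realized in $C_{\alpha+1}$. Iterating (with compactness to handle limits) produces an increasing chain of length $\kappa_0$ with $\bar a\notind_{C_\alpha}C_\beta$ for $\alpha<\beta$, contradicting Lemma~\ref{lem:forking-chain}. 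The point is that this argument never touches $\Ltwo$-types, back-and-forth, or the auxiliary $\bar a^*$; it uses only Local Character and Invariance of $\ind$ in the $\Lang$-monster.
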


\begin{proof}
Suppose not. Then for each small $C_{\alpha}\subseteq P$ there is some small
$C_{\alpha+1}\subseteq P$ such that $\bar{a}\notind_{C_\alpha}C_{\alpha+1}$
and $\tp^1(C_{\alpha}/\bar{a})$ is realised in $C_{\alpha+1}$. 
By compactness there is, for any $\kappa$ less than the monstrosity of
$\monster$, an increasing sequence $(C_{\alpha})_{\alpha<\kappa}$ such that
$\bar{a}\notind_{C_{\alpha}}C_{\beta}$ for all $\alpha<\beta<\kappa$,
contradicting Lemma~\ref{lem:forking-chain}.
\end{proof}

\begin{definition}
Given a tuple of variables~$\z$, we will write $P(\z)$ as a shorthand for $P(z_1) \wedge \dots \wedge P(z_n)$.
A \intro{special} formula is a formula of the form
$(\exists \z) \Pa{P(\z) \wedge \varphi(\x,\z)}$, where $\varphi$ is an $\Lang$-formula. 
\end{definition}

\begin{proposition}[Near model completeness]
\label{prop:model-complete}
Every $\Ltwo$-formula without parameter is equivalent modulo $T^2$ to a Boolean combination of special formulae \rom(without parameters\rom).
\end{proposition}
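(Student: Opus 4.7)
By the standard compactness criterion, the proposition is equivalent to: whenever $\Mp$ and $\Mp'$ are sufficiently saturated models of $\Td$ and $\av \in \monster$, $\av' \in \monster'$ are finite tuples satisfying the same special $\Ltwo$-formulae over $\emptyset$, one has $\av \equiv^2 \av'$. Taking the witnessing tuple $\z$ empty already gives $\av \equiv^1 \av'$. Moreover, the hypothesis combined with $\Ltwo$-saturation yields partial elementarity between $P(\monster)$ and $P(\monster')$ seen over $\av, \av'$: for every small $\cv \subseteq P(\monster)$ there is $\cv' \subseteq P(\monster')$ with $\av\cv \equiv^1 \av'\cv'$, and symmetrically, since each finite fragment of the target $\Lang$-type paired with the predicate $P$ is witnessed by a special formula satisfied by $\av$, hence by $\av'$.

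The plan is to produce matching \Pindependent extensions $\av\cv, \av'\cv'$ with $\cv \subseteq P(\monster)$, $\cv' \subseteq P(\monster')$, and $\av\cv \equiv^1 \av'\cv'$, and then invoke Proposition~\ref{prop:back-and-forth}. The construction proceeds by a back-and-forth of length $\omega$, alternating the two sides. Start with $\cv_0 = \cv'_0 = \emptyset$. Given $\cv_n \subseteq P(\monster)$, $\cv'_n \subseteq P(\monster')$ with $\av\cv_n \equiv^1 \av'\cv'_n$: at even $n$, apply Lemma~\ref{lem:local-large} to $\av\cv_n$ in $\Mp$ to obtain a small $D_n \subseteq P(\monster)$ witnessing $\av\cv_n \ind_{D_n} P(\monster)$; set $\cv_{n+1} := \cv_n D_n$, and use the partial elementarity (updated to the current base) to realise in $P(\monster')$ a tuple $D'_n$ with $\av\cv_{n+1} \equiv^1 \av'\cv'_n D'_n$. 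At odd $n$, perform the symmetric step on the $\Mp'$ side. Take unions $\cv := \bigcup_n \cv_n$, $\cv' := \bigcup_n \cv'_n$.

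Then $\av\cv \equiv^1 \av'\cv'$ by construction. At each even stage, $\av\cv_n \ind_{D_n} P(\monster)$ implies $\av \ind_{\cv_{n+1}} P(\monster)$ by transitivity; base monotonicity along the increasing chain propagates this to $\av \ind_{\cv} P(\monster)$, so $\av\cv$ is \Pindependent in $\Mp$, and symmetrically $\av'\cv'$ is \Pindependent in $\Mp'$. The $\Ptp$ of $\av$ equals that of $\av'$ (because $P(x_i)$ is itself equivalent to the special formula $\exists z\,(P(z) \wedge z = x_i)$), so $\Ptp(\av\cv) = \Ptp(\av'\cv')$ as $\cv \subseteq P(\monster)$, $\cv' \subseteq P(\monster')$. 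Proposition~\ref{prop:back-and-forth} now applies and delivers $\av\cv \equiv^2 \av'\cv'$, in particular $\av \equiv^2 \av'$, as required.

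The main obstacle is keeping the partial $\Lang$-elementarity alive as the base grows: at stage $n$ we must know that any tuple from $P(\monster)$ adjoined to $\av\cv_n$ has a matching partner in $P(\monster')$ over $\av'\cv'_n$. This is where $\Ltwo$-saturation of the pairs is essential, so that the type we try to realise (the $\Lang$-type of the new tuple together with the assertion that it lies in $P$) is captured by special formulae over the current base and hence finitely satisfied on the other side. Once this propagation is secured, the remaining ingredients---base monotonicity, finite character of $\ind$, and the back-and-forth of Proposition~\ref{prop:back-and-forth}---assemble the proof without further difficulty.
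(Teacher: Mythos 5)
Your overall plan---reduce to showing that two tuples satisfying the same special formulae have the same $\Ltwo$-type, extend both to \Pindependent tuples with witnesses in $P$, and then invoke Proposition~\ref{prop:back-and-forth}---is the right one and matches the paper's strategy. However, your iterated back-and-forth construction has a genuine gap at the propagation step, and it is exactly the gap that the paper's one-shot use of Lemma~\ref{lem:local-large} is designed to avoid.

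Concretely: at an even stage you adjoin $D_n \subseteq P(\monster)$ to $\cv_n$ and then want to find $D'_n \subseteq P(\monster')$ with $\av\cv_n D_n \equiv^1 \av'\cv'_n D'_n$, \emph{extending} the current partial map. To realise $\tp^1(D_n/\av\cv_n)$ over $\av'\cv'_n$ inside $P(\monster')$, you need, for each $\Lang$-formula $\varphi(\x,\y,\z)$ with $\Mp\models\varphi(\av,\cv_n,D_n)$, that $\Mp'\models\exists\z\bigl(P(\z)\wedge\varphi(\av',\cv'_n,\z)\bigr)$. But all the hypothesis gives you is $\Mp'\models\exists\y\z\bigl(P(\y)\wedge P(\z)\wedge\varphi(\av',\y,\z)\bigr)$: you can match special formulae over $\emptyset$, not over the particular tuple $\cv'_n$ you have already committed to. Knowing $\av\cv_n\equiv^1\av'\cv'_n$ and that both are subsets of $P$ is strictly weaker than knowing $\av\cv_n\equiv^2\av'\cv'_n$ (which is what you would need), and proving the latter over the growing base is precisely what you are trying to establish---so the induction is circular. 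You also apply Lemma~\ref{lem:local-large} to the possibly infinite tuple $\av\cv_n$, while the lemma is stated only for finite tuples.

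The paper circumvents all of this with a one-shot argument that uses the \emph{full} strength of Lemma~\ref{lem:local-large}: the small $C\subseteq P$ it produces satisfies $\av\ind_{C'}P$ for \emph{every} $C'\equiv^1_{\av}C$ with $C'\subseteq P$, not merely for $C$ itself. Given $D\subseteq P$ with $\av C\equiv^1 \bv D$ (obtainable from the hypothesis on special formulae, one saturation argument), the paper does \emph{not} reapply local character on the $\bv$ side. Instead it supposes $\bv\notind_D P$, picks a witness $E\subseteq P$, pulls the triple $\bv D E$ back to $\av$'s side via a further special formula to get $C',E'\subseteq P$ with $\av C'E'\equiv^1 \bv D E$, notes $C'\equiv^1_{\av}C$, applies the invariance clause to deduce $\av\ind_{C'}E'$, and obtains a contradiction with Invariance of $\ind$. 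That invariance quantifier over $C'$ is the missing ingredient in your argument; with it, a single pass (rather than an $\omega$-length back-and-forth) already produces matching \Pindependent extensions to feed into Proposition~\ref{prop:back-and-forth}.
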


\begin{proof}
Let $\bar{a}$ and $\bar{b}$ be finite tuples from~$\monster$. Suppose they both satisfy exactly the same special formulae.
It suffices to prove that $\tp^2(\bar{a}) = \tp^2(\bar{b})$. 
Let $C \subset P$ be as in Lemma~\ref{lem:local-large}.
By assumption and compactness there is some
$D\subseteq P$ such that $\tp^1(\bar{a}C) = \tp^1(\bar{b}D)$. Suppose
$\bar{b}\notind_D P$. Then there is some small $E\subseteq P$ such that 
$\bar{a}\notind_D E$. By assumption and compactness
there exist $C^\prime, E^\prime \subseteq P$ such that
$\tp^1(\bar{a}C^\prime E^\prime) = \tp^1(\bar{b}DE)$. But then $C^\prime\models
\tp^1(C/\bar{a})$ and so $\bar{a}\ind_{C^\prime}P$ and hence
$\bar{a}\ind_{C^\prime}E^\prime$. This contradicts invariance of~$\ind$. So
$\bar{b}\ind_D P$. So both $\bar{a}C$ and $\bar{b}D$ are
\Pindependent. It follows from our assumption that $\Ptp(\bar{a}C) =
\Ptp(\bar{b}D)$ and we know $\tp^1(\bar{a}C) = \tp^1(\bar{b}D)$.  
Therefore $\tp^2(\bar{a}C) = \tp^2(\bar{b}D)$. 
\end{proof}


\subsection{Definable subsets of P}

\begin{lemma}[{\cite[4.1.7]{boxall}}]
\label{lem:trace}
Let $\bv \in \monster^n$ be \Pindependent.
Given a set $Y \subseteq P^m$, \tfae:
\begin{enumerate}
\item $Y$ is $\Td$-definable over $\bv$;
\item there exists $Z \subseteq \monster^m$ that is $T$-definable over~$\bv$, such that
$Y = Z \cap P^m$.
\end{enumerate}
\end{lemma}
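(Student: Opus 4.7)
The direction (2) $\Rightarrow$ (1) is immediate: if $Z$ is cut out by $\varphi(\x, \bv)$, then $Y = Z \cap P^m$ is defined by $\varphi(\x, \bv) \wedge P(\x)$ in $\Ltwo$. For (1) $\Rightarrow$ (2), let $\psi(\x, \bv)$ be an $\Ltwo$-formula defining $Y$. My plan is to establish the key reduction that, for $x \in P^m$, whether $x \in Y$ depends only on $\tp^1(x/\bv)$, and then deduce via a Stone-space separation argument that $Y$ is the trace on $P^m$ of a $T$-definable set.

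For the reduction, I will apply Proposition~\ref{prop:back-and-forth} to the concatenated tuples $\bv x$ and $\bv x'$, where $x, x' \in P^m$ satisfy $\tp^1(x/\bv) = \tp^1(x'/\bv)$. The main point is that both concatenations are \Pindependent: from $\bv \ind_{P(\bv)} P$ and $x \subseteq P$, base monotonicity (subsumed under Left Transitivity) yields $\bv \ind_{P(\bv)\, x} P$, and then Normality gives $\bv x \ind_{P(\bv)\, x} P$; since $P(\bv x) = P(\bv) \cup x$, this reads $\bv x \ind_{P(\bv x)} P$, and identically for $\bv x'$. Both tuples share the same $\Ptp$ (every coordinate of $x$, $x'$ lies in $P$, and the $P$-status of each coordinate of $\bv$ is intrinsic) and, by hypothesis, the same $\Lang$-type, so Proposition~\ref{prop:back-and-forth} delivers $\tp^2(\bv x) = \tp^2(\bv x')$, whence $\psi(x, \bv) \iff \psi(x', \bv)$.

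Next, set $X_Y := \{q \in S^1(\bv) : q \text{ has a realization in } Y\}$ and $X_{\neg Y} := \{q \in S^1(\bv) : q \text{ has a realization in } P^m \setminus Y\}$; by the reduction these sets are disjoint. Each is closed in $S^1(\bv)$: if $q \notin X_Y$, then the partial $\Ltwo$-type $q \cup \{P(x_i)\}_i \cup \{\psi(\x, \bv)\}$ is inconsistent, and compactness produces a single $\delta(\x, \bv) \in q$ witnessing this, giving a clopen neighborhood of $q$ disjoint from $X_Y$. Total disconnectedness of the Stone space then produces a clopen $[\chi(\x, \bv)] \supseteq X_Y$ disjoint from $X_{\neg Y}$, and $Z := \chi(\monster^m, \bv)$ is the required $T$-definable set.

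I expect the main obstacle to lie in the \Pindependence verification for $\bv x$: this is the sole place where the hypothesis on $\bv$ is exercised, and without it the $\Ltwo$-type of $x \in P^m$ over $\bv$ would fail to be governed by its $\Lang$-type alone, making the whole reduction collapse.
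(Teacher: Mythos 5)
Your proposal is correct and is essentially the paper's own proof, written out in full detail: the paper states only that (1)$\Rightarrow$(2) ``follows from compactness and the fact that the $\Ltwo$-type over $\bv$ of elements from $P$ is determined by their $\Lang$-types,'' and you supply exactly that fact (via Proposition~\ref{prop:back-and-forth}, after verifying that $\bv x$ is \Pindependent for $x \subseteq P$) together with the Stone-space compactness/separation argument it implicitly invokes.
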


\begin{proof}
$(2 \Rightarrow 1)$ is obvious.
Assume (1).
Then, (2) follows from compactness and the fact that the $\Ltwo$-type over $\bv$ elements from $P$ is determined by their $\Lang$-types.
\end{proof}


\subsection{Definable and algebraic closure}

Let $\pair{M, P(M)}$ be a lovely pair.
In the next proposition we will consider imaginary elements; to simplify the notation, we will use $\acl^1$ for the algebraic closure for imaginary elements in~$\Meq$, and $\acl^2$ for the algebraic closure for imaginary elements in $\pair{M, P(M)}^\eq$, and similarly for~$\dcl$.

\begin{proposition}[{\cite[4.1.8, 4.1.9]{boxall}}]
\label{prop:acl}
Let $\av \subset M$ be \Pindependent.
Then, $\acl^2(\av) \cap \Meq = \acl^1(\av) \cap \Meq$ and
$\dcl^2(\av) \cap \Meq = \dcl^1(\av) \cap \Meq$.
\end{proposition}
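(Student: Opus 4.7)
The inclusions $\acl^1(\av) \cap \Meq \subseteq \acl^2(\av) \cap \Meq$ and $\dcl^1(\av) \cap \Meq \subseteq \dcl^2(\av) \cap \Meq$ are immediate, since any $\Lang$-formula witnessing algebraicity (respectively, definability) of $c$ over $\av$ is itself an $\Ltwo$-formula with the same property.

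For the reverse inclusion of the $\acl$-statement, take $c \in \acl^2(\av) \cap \Meq$ and argue by contradiction: suppose $c \notin \acl^1(\av)$, so $\tp^1(c/\av)$ has infinitely many realizations. The first step is to construct, by iterated use of the Extension Axiom, pairwise distinct realizations $d_0, d_1, \dots \in \Meq$ of $\tp^1(c/\av)$ such that $d_0 \cdots d_n \ind_{\av} P(\monster)$ for every $n$. At stage $n$, I pick some $d_n' \notin \{d_0, \dots, d_{n-1}\}$ realizing the type (available since it is non-algebraic), and then apply Extension over $\av d_0 \cdots d_{n-1}$ to obtain $d_n \equiv^1_{\av d_0 \cdots d_{n-1}} d_n'$ with $d_n \ind_{\av d_0 \cdots d_{n-1}} P(\monster)$; distinctness is preserved because equality is an $\Lang$-formula, and transitivity with the inductive hypothesis gives the collective independence. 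In particular each $d_n \ind_{\av} P(\monster)$; combined with $P$-independence of $\av$ (and the fact that $P(\av d_n) = P(\av)$, since $d_n \in \Meq$ is imaginary and hence not in $P$), transitivity of $\ind$ forces $\av d_n$ to be \Pindependent.

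I now invoke an $\Meq$-version of Proposition~\ref{prop:back-and-forth}, obtained by writing $c = [\bv]_E$ and $d_n = [\bv_n]_E$ for some $\emptyset$-$\Lang$-definable equivalence relation $E$ and using Extension on representatives in $M$ to arrange $P$-independent real tuples $\av \bv_0^*$ and $\av \bv_n^*$ with matching $\Lang$-type over $\av$ and with $[\bv_0^*]_E = d_0$, $[\bv_n^*]_E = d_n$; the real-variable back-and-forth then yields $\av \bv_0^* \equiv^2 \av \bv_n^*$, hence $\av d_0 \equiv^2 \av d_n$. Thus all $d_n$ share a common $\Ltwo$-type over $\av$. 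The main obstacle is to identify this common type with $\tp^2(c/\av)$: concretely, I must exhibit some $c^* \in \Meq$ with $c^* \equiv^2_{\av} c$ and $c^* \ind_{\av} P(\monster)$, for then back-and-forth applied to the $P$-independent tuples $\av c^*$ and $\av d_0$ forces $c^* \equiv^2_{\av} d_0$, so $c \equiv^2_{\av} d_n$ for all $n$, producing infinitely many realizations of $\tp^2(c/\av)$ and contradicting $c \in \acl^2(\av)$. To produce $c^*$, Near Model Completeness (Proposition~\ref{prop:model-complete}) expresses $\tp^2(c/\av)$ as a Boolean combination of special formulas, yielding $\pv \subseteq P(\monster)$ with $c \in \acl^1(\av \pv)$; then Extension displaces $\pv$ into general position with respect to $\av c$, and the corresponding $\Lang$-automorphism transports $c$ to a $P$-independent representative of its $\Ltwo$-orbit over $\av$.

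The $\dcl$ statement is then deduced from the $\acl$ statement: if $c \in \dcl^2(\av) \cap \Meq$, then $c \in \acl^2(\av) \cap \Meq = \acl^1(\av) \cap \Meq$ by the above, so $c$ has finitely many $\Lang$-conjugates $c = c_1, \dots, c_k$ over $\av$. The same back-and-forth argument applied to $P$-independent witnesses in each $\Lang$-orbit shows that every $c_i$ is $\Ltwo$-conjugate to $c$ over $\av$; since $c \in \dcl^2(\av)$, this forces $k = 1$, whence $c \in \dcl^1(\av)$.
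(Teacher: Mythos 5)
Your proof takes a genuinely different route from the paper's, but the route has gaps that the paper's argument is specifically designed to avoid.

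The paper proves the $\dcl$-equality first, by choosing a \emph{real} P-independent very small tuple $\bv \subset M$ containing $\av$ with $e \in \dcl^1(\bv)$, and then running two claims about real tuples $\bv', \bv''$ (Claim~\ref{cl:dcl-1} and Claim~\ref{cl:dcl-2}); the $\acl$-equality is then deduced by passing to the canonical parameter $e'$ of the finite $\Ltwo$-orbit $X$ of $e$ over $\av$, which lands the problem back in $\dcl$. The crucial feature of this design is that the independence relation $\ind$ is only ever applied to \emph{real} tuples, so the proof lives entirely within the standing hypotheses of \S\ref{sec:model-complete}, which do not include the existence of an independence relation $\indeq$ on $\monster^{\eq}$.

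Your argument, by contrast, works directly with imaginary elements: you build $d_0, d_1, \dots \in \Meq$ with $d_0 \cdots d_n \ind_{\av} P$, assert $P$-independence of $\av d_n$, and invoke an ``$\Meq$-version'' of Proposition~\ref{prop:back-and-forth}. All of these steps presuppose that $\ind$ (or $\indeq$) makes sense on imaginary elements, with the usual transitivity and extension calculus. The paper explicitly isolates that assumption as the Proviso in the ``Imaginary tuples'' subsection of \S\ref{sec:tuples} --- and warns that $\indeq$ may fail to have the expected properties (it may fail to be strict, and the paper states only a uniqueness result about $\indeq$, not existence) --- precisely because Proposition~\ref{prop:acl} is meant to hold without it. In particular, your justification of $P(\av d_n) = P(\av)$ by noting ``$d_n$ is imaginary and hence not in $P$'' overlooks that the relevant notion is $\mateq(P) \cap (\av d_n)$, not literal membership in $P$, and ruling $d_n$ out of $\mateq(P)$ again needs the imaginary independence calculus.

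There is a second, more structural gap. Your proof of the $\acl$-direction hinges on exhibiting $c^* \equiv^2_{\av} c$ with $c^* \ind_{\av} P$; you acknowledge this as ``the main obstacle'' and sketch a resolution via Near Model Completeness and displacing $\pv \subseteq P$. But that sketch is not a proof: nothing in Proposition~\ref{prop:model-complete} by itself delivers an $\Ltwo$-conjugate of $c$ over $\av$ that is independent from $P$, and the existence of such a conjugate is in essence what the proposition is trying to establish (for $c \in \acl^1(\av)$ one gets $c \in \mat(\av)$ and hence $c \ind_\av P$ for free --- but you are arguing under the contrary assumption $c \notin \acl^1(\av)$). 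The paper's canonical-parameter reduction sidesteps this difficulty entirely. Your final deduction of $\dcl$ from $\acl$ has the same flavour of difficulty: showing that the $\Lang$-conjugates $c_i$ of $c$ over $\av$ are $\Ltwo$-conjugates again calls for a back-and-forth on imaginary tuples. So while the overall plan (count conjugates, deduce $\dcl$ from $\acl$) is a reasonable alternative, as written it relies on machinery the paper has deliberately not granted itself at this point, and the construction of $c^*$ needs a real argument rather than a gesture.
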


\begin{proof}
Clearly, $\dcl^2(\av) \cap \Meq \subseteq \dcl^1(\av) \cap \Meq$, and similarly for~$\acl$.
We have to prove the opposite inclusions.
\Wlog, $\av$ is a very small tuple.

Let us prove first the statement for~$\dcl$.
So, let $e \in \Meq \cap \dcl^2(\av)$.
Let $\bv$ be a \Pindependent very small tuple in $M$ containing~$\av$, such that $e \in \dcl^1(\bv)$.
Denote $\bv_0 \coloneqq P(\bv)$ and $\bv_1 \coloneqq \bv \setminus P(\bv)$; notice that
$\bv \ind_{\bv_0} P(M)$.

\begin{claim}\label{cl:dcl-1}
Let $\bv' \elem^1_{\av} \bv$ be such that $\bv' \ind_{\av} \bv$.
Then, $\bv' \elem^1_e \bv$.
\end{claim}
Notice that $\bv'_0 \nsubseteq P(M)$ in general.
Since $a$ is \Pindependent, we have $a \ind_{P(\av)} \bv_0$, and therefore $\bv'_0 \ind_{P(\av)} \av$.
Since moreover $\bv'_0 \ind_{\av} \bv$, by transitivity we have $\bv'_0 \ind_{P(\av)} \bv$, and thus
$\bv'_0 \ind_{P(\bv)} \bv$.
Therefore, by the Density property, there exists $\bv''_0 \subset P(M)$ such that $\bv''_0 \elem^1_{\bv} \bv'_0$.

Let $\theta \in \aut^1(\monster / \bv)$ be such that $\theta(\bv'_0) = \bv''_0$.
Let $r(x) \coloneqq \tp^1(\bv'_1 / \bv \bv'_0)$, and $q \coloneqq \theta(r) \in S^1(\bv \bv''_0)$.
By the Extension property, there exists $\bv''_1$ in~$M$, 
such that $\bv''_1$ realizes~$q$ and
\begin{equation}\label{eq:dcl}
\bv''_1 \ind_{\bv \bv''_0} P(M).
\end{equation}
Denote $\bv'' \coloneqq \bv''_0 \bv''_1$; notice that
$\bv'' \elem^1_{\bv} \bv'$.
Since $\bv \ind_{\av} \bv'$, we have $\bv \ind_{\av} \bv''$, and therefore
$\bv_1'' \ind_{\bv_0'' \av} \bv \bv''_0$.
Therefore, by \eqref{eq:dcl} and transitivity, $\bv''_1 \ind_{\av \bv''_0} P(M)$.
Since moreover $\av$ is \Pindependent, by transitivity again we have
$\bv'' \ind_{\bv''_0} P(M)$.

Thus, we proved that $\bv''$ is \Pindependent and has the same \Ptype over $\av$ as~$\bv$, and thus
$\bv'' \elem^2_{e} \bv$.
Since moreover, by definition, $\bv'' \elem^1_{\bv} \bv'$, we have
$\bv'' \elem^1_e \bv'$, and the claim is proved.

Since $e \in \dcl^1(\bv)$, there exists a function $f$ which is $T$-definable without parameters, and such that
$e = f(\bv)$

\begin{claim}\label{cl:dcl-2}
Assume that $\bv'' \subset M$ and
$\bv'' \elem_{\av} \bv$.
Then, $e = f(\bv'')$.
\end{claim}
It follows immediately from Claim~\ref{cl:dcl-1}.

Now, since loveliness is first-order, we can assume that $\pair{M, P(M)}$ is sufficiently saturated; therefore,
Claim~\ref{cl:dcl-2} implies that $e \in \dcl^1(\av)$, and thus
$\dcl^1(\av) \cap \Meq = \dcl^2(\av) \cap \Meq$.

Assume now that $e \in \acl^2(\av) \cap \Meq$.
Let $X$ be the set of realizations of $\tp^2(e / \av)$.
Notice that $X$ is a finite subset of~$\Meq$, and therefore it is definable in~$\Meq$;
Let $e'$ be a canonical parameter for $X$ in the sense of~$\Meq$.
Since $e' \in \dcl^2(\av) \cap \Meq$, by the first assertion we have $e' \in \dcl^1(\av)$.
Since $e \in \acl^1(e')$, we $e \in \acl^1(\av)$.
Therefore, $\acl^1(\av) \cap \Meq = \acl^2(\av) \cap \Meq$.
\end{proof}

\begin{proposition}[{\cite[6.1.3]{boxall}}]
Let $\av \subset M$ be \Pindependent.
Let $f: P(M)^n \to \Meq$ be $\Td$-definable with parameters~$\av$.
Then, there exists $g: M^n \to \Meq$ which is $T$-definable with parameters~$\av$,
such that $f = g \rest P(M)^n$.
\end{proposition}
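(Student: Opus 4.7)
The plan is to combine Proposition~\ref{prop:acl}, which gives pointwise $\dcl^1$-definability of $f$, with Lemma~\ref{lem:trace}, which converts an $\Ltwo$-definable subset of $P^n$ into the trace of a $T$-definable set, and then finish by compactness.

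First I would verify that for every $\pv \in P(\monster)^n$ the tuple $\av\pv$ is again \Pindependent. From $\av \ind_{P(\av)} P(\monster)$, Base Monotonicity gives $\av \ind_{P(\av)\pv} P(\monster)$, Normality then yields $\av\pv \ind_{P(\av)\pv} P(\monster)$, and since $\pv \subseteq P(\monster)$ we have $P(\av\pv) = P(\av)\pv$. Hence by Proposition~\ref{prop:acl}, $f(\pv) \in \dcl^1(\av\pv) \cap \monstereq$. So for each such $\pv$ there is an $\Lang$-formula $\psi_\pv(\x,y)$ over $\av$ (the role of $\pv$ being played by the free variable $\x$) such that $\alpha_\pv(\x) \coloneqq \exists!\,y\,\psi_\pv(\x,y)$ lies in $\tp^1(\pv/\av)$, with $f(\pv)$ the unique $y$ satisfying $\psi_\pv(\pv,y)$. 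Let $h_\pv$ be the $T$-definable partial function over $\av$ determined by $\psi_\pv$.

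Next I would make this pointwise agreement uniform. The set
\[
S_\pv \coloneqq \{\pv' \in P(\monster)^n : \alpha_\pv(\pv')\text{ and }h_\pv(\pv')=f(\pv')\}
\]
is $\Td$-definable over the \Pindependent set $\av$, so by Lemma~\ref{lem:trace} there is a $T$-definable over $\av$ set $S'_\pv \subseteq \monster^n$ with $S'_\pv \cap P^n = S_\pv$. Let $\phi_\pv$ be an $\Lang$-formula over $\av$ equivalent to $\alpha_\pv \wedge (\x \in S'_\pv)$. Then $\pv \models \phi_\pv$, and for every $\pv' \in P(\monster)^n$ with $\phi_\pv(\pv')$ we have $h_\pv(\pv') = f(\pv')$. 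A compactness argument then cuts down to a finite cover: the partial type $\{P(x_1),\dots,P(x_n)\} \cup \{\neg\phi_\pv : \pv \in P(\monster)^n\}$ has size at most $|T|+|\av|$ (there are only that many $\Lang$-formulas over $\av$ up to equivalence), so if it were consistent it would be realized by some $\pv^* \in P(\monster)^n$ by saturation of $\monster_P$; but running the construction on $\pv^*$ yields $\pv^* \models \phi_{\pv^*}$, a contradiction. So finitely many $\phi_{\pv_1},\dots,\phi_{\pv_k}$ already cover $P(\monster)^n$, and I define
\[
g(\x) \coloneqq h_{\pv_i}(\x) \quad \text{for the least $i$ with $\phi_{\pv_i}(\x)$,}
\]
with an arbitrary fixed $T$-definable over $\av$ value elsewhere. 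Then $g:\monster^n \to \monstereq$ is $T$-definable over $\av$ and $g \rest P(\monster)^n = f$.

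The main obstacle I expect is precisely the passage from pointwise to uniform definability: the witnesses $\psi_\pv$ genuinely depend on $\pv$, and the clause ``$h_\pv$ agrees with $f$ on $P^n$'' is a priori only $\Ltwo$-definable. Lemma~\ref{lem:trace} is the decisive tool, letting us absorb that clause into an $\Lang$-formula $\phi_\pv$ over $\av$, after which a routine compactness argument produces the required $g$.
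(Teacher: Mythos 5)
Your proof is correct and follows the same route as the paper's: apply Proposition~\ref{prop:acl} pointwise to get $f(\pv)\in\dcl^1(\av\pv)$ for each $\pv\in P^n$ (after checking $\av\pv$ is still \Pindependent), then use compactness to cut down to finitely many $T$-definable candidate functions, and use Lemma~\ref{lem:trace} to make the case distinction $\Lang$-definable. The paper's own proof is terser (and in fact has a small slip, writing $\dcl^1(\av)$ where $\dcl^1(\av\pv)$ is meant, as you correctly have it); you have simply filled in the details.
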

\begin{proof}
Let $\pair{N, P(N)}$ be an elementary extension of $\pair{M, P(M)}$ and $c \in P(N)$.
By Proposition~\ref{prop:acl}, $f(c) \in \dcl^1(\av)$, and therefore there exists 
$g_i: N^n \to N^\eq$ which is $T$-definable with parameters~$\av$, such that $f(c) = g_i(c)$.
By compactness, finitely many $g_i$ will suffice.
The conclusion follows from Lemma~\ref{lem:trace}.
\end{proof}

Notice that in the above proposition we were not able to prove the stronger
result that $\param g \in \dcl^2(\param f)$,
where $\param f$ is the canonical parameter of $f$ according to~$\Td$ (\cf \cite[6.1.3]{boxall}).
Nor were we able to prove any form of elimination of imaginaries for $\Td$ (\cf \cite[theorems 1.2.4, 1.2.6 and 1.2.7]{boxall} and \cite[\S8.5]{fornasiero-matroids}).


\section{Small and imaginary tuples }\label{sec:tuples}

\subsection{Small tuples}
We show how in Definition~\ref{def:lovely}, we can pass from $\Lang$-1-types to
$\Lang$-types in very small number of variables.
Let $M_P = \pair{M, P(M)}$ be a small model of $\Ttwo$.

\begin{lemma}\label{lem:multi-density}
Assume that $M_P$ 
satisfies the Density Property \rom(for $\Lang$-1-types\rom).
Then, $M_P$ satisfies the Density Property for $\Lang$-types of very
small length.
\end{lemma}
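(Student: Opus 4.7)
The plan is transfinite induction on the length of the tuple. Let $\bar x$ have very small length $\lambda$, let $q(\bar x) \in S^1_\lambda(A)$ satisfy $q \ind_{P(A)} A$, and fix a realization $\bar c = (c_\alpha)_{\alpha<\lambda}$ of $q$ in~$\monster$, so that $\bar c \ind_{P(A)} A$. I will construct, by recursion on $\alpha \leq \lambda$, elements $c'_\alpha \in P(M)$ such that the partial tuple $\bar c'_{<\alpha} := (c'_\beta)_{\beta<\alpha}$ satisfies $\bar c'_{<\alpha} \elem^1_A \bar c_{<\alpha}$; at stage $\lambda$ the tuple $\bar c' := (c'_\alpha)_{\alpha<\lambda}$ then lies in $P(M)^\lambda$ and realizes~$q$.

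At a successor stage $\alpha$, assume $\bar c'_{<\alpha} \subset P(M)$ satisfies $\bar c'_{<\alpha} \elem^1_A \bar c_{<\alpha}$. Pick some $\sigma \in \aut(\monster)$ fixing $A$ pointwise and sending $\bar c_{<\alpha}$ to $\bar c'_{<\alpha}$, and set $c^*_\alpha := \sigma(c_\alpha)$. Write $A_\alpha := A \cup \bar c'_{<\alpha}$, still very small as the union of two very small sets, and note that $\bar c'_{<\alpha} \subset P(M)$ yields $P(A_\alpha) = P(A) \cup \bar c'_{<\alpha}$. I aim to apply the 1-type Density Property to $r := \tp^1(c^*_\alpha / A_\alpha) \in S^1_1(A_\alpha)$, which requires verifying $c^*_\alpha \ind_{P(A)\, \bar c'_{<\alpha}} A\, \bar c'_{<\alpha}$.

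This independence is derived from $\bar c \ind_{P(A)} A$ by standard manipulations of the axioms: Invariance under $\sigma$ yields $c^*_\alpha\, \bar c'_{<\alpha} \ind_{P(A)} A$; base monotonicity (absorbed into Transitivity per the footnote on its axiom) lifts the base to give $c^*_\alpha\, \bar c'_{<\alpha} \ind_{P(A)\, \bar c'_{<\alpha}} A$; and Symmetry combined with Normality then absorbs $\bar c'_{<\alpha}$ into the right-hand argument, producing $c^*_\alpha \ind_{P(A)\, \bar c'_{<\alpha}} A\, \bar c'_{<\alpha}$. The 1-type Density hypothesis then supplies $c'_\alpha \in P(M)$ realizing $r$, and by construction $\bar c'_{\leq \alpha} \elem^1_A \bar c_{\leq \alpha}$. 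Limit stages are immediate since equality of $\Lang$-types has finite character.

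The only substantive point is the propagation of the independence condition through successor stages, but this is handled cleanly by transport via $\sigma$ and routine use of the independence axioms; no serious obstacle arises. The main design choice is to build $\bar c'$ one coordinate at a time while carrying $\bar c'_{<\alpha}$ inside the set of parameters (where it sits harmlessly in the base $P(A_\alpha)$ because it already lies in $P(M)$), which is precisely what converts the multi-dimensional density statement into iterated applications of the 1-dimensional one.
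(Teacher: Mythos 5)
Your argument is correct and takes essentially the same approach as the paper's: the paper packages the coordinate-by-coordinate construction as an application of Zorn's lemma to the family of partial $1$-automorphisms over $A$ mapping a subtuple of $\bar c$ into $P(M)$, rather than running an explicit transfinite recursion, but the content of the extension step is identical — transport the next coordinate by an automorphism over $A$ extending the partial map, verify the required independence by forking calculus, and invoke the $1$-variable Density Property. One minor imprecision worth flagging: the intermediate passage from $c^*_\alpha\bar c'_{<\alpha}\ind_{P(A)}A$ to $c^*_\alpha\bar c'_{<\alpha}\ind_{P(A)\bar c'_{<\alpha}}A$ is not base monotonicity alone, since base monotonicity requires the enlarged base to lie inside the right-hand argument and $\bar c'_{<\alpha}\not\subseteq A$ in general; it also needs Symmetry and Normality. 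But the target independence $c^*_\alpha\ind_{P(A)\bar c'_{<\alpha}}A\bar c'_{<\alpha}$ does follow from $c^*_\alpha\bar c'_{<\alpha}\ind_{P(A)}A$ by the standard forking calculus, so the argument goes through.
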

\begin{proof}
Let $A \subset M$ be very small and $\bv$ be a tuple in $M$ of very small
length, such that $\bv \ind_{P(A)} A$.
We must prove that there exists $\bv' \subset P(M)$ such that 
$\bv' \elem^1_A \bv$.
Define
\[
\Ffam := \set{f \text{ partial 1-automorphism of } M/A:\
\Dom(f) \subseteq \bv,\
\Imag(f) \subseteq P(M)}.
\]
Let $f \in \Ffam$ be a maximal element (Zorn).
I claim that $\Dom(f) = \bv$  (this suffices to prove the conclusion).
Assume not.
Let $\dv := \Dom(f)$ and $e \in \bv \setminus \dv$.
Let $\dv' := f(\dv) \subset P$, $q := \tp^1(e/ A \dv)$, and $q' := f(q)$.
Notice that $e' \dv' \ind_{P(A)} A$, and therefore
$q' \ind_{P(A) \dv'} A \dv'$.
Since $\dv' \subseteq B$, the Density Property for $\Lang$-1-types implies
that there exists $e' \in P(M)$ satisfying $q'$,
and hence $\dv' e'' \elem^1_A \dv e$, contradicting the maximality of~$f$.
\end{proof}

\begin{lemma}
Assume that $M_P$ satisfies the Extension Property \rom(for $\Lang$-1-types\rom).
Then, $M_P$ satisfies the Extension Property for $\Lang$-types of very
small length.
\end{lemma}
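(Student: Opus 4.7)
The plan is to proceed by transfinite recursion on the length of the tuple. Let $\bv = (b_i : i < \alpha)$ be a realization of $q$ in~$\monster$, where $\alpha$ is very small. I will construct $\bv' = (b_i' : i < \alpha) \subset M$ maintaining, for every $i \leq \alpha$, the two invariants (a) $\bv' \rest i \elem^1_A \bv \rest i$, and (b) $\bv' \rest i \ind_A P(M)$. The case $i = \alpha$ then yields the desired realization.

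For the successor step, assume $\bv' \rest i$ has been built. Transport $\tp^1(b_i / A \bv \rest i)$ along the partial $A$-isomorphism $\bv \rest i \mapsto \bv' \rest i$ to obtain some $q_i \in S^1_1(A \bv' \rest i)$. Since $A$ and $i$ are both very small, so is $A \bv' \rest i$, so the Extension Property for 1-types supplies $b_i' \in M$ realizing $q_i$ with $b_i' \ind_{A \bv' \rest i} P(M)$. Combined with the inductive hypothesis $\bv' \rest i \ind_A P(M)$, Transitivity along the base chain $A \subseteq A \bv' \rest i \subseteq A \bv' \rest (i{+}1)$ (together with a routine use of Symmetry and Normality to absorb $A$ into the base) yields $\bv' \rest (i{+}1) \ind_A P(M)$, while (a) is preserved by construction. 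At a limit $\lambda \leq \alpha$, (a) passes to $\lambda$ automatically, and (b) follows from Finite Character: any finite subtuple of $\bv' \rest \lambda$ lies in some $\bv' \rest i$ with $i < \lambda$, hence is independent from $P(M)$ over~$A$ by hypothesis.

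The only bookkeeping point worth mentioning is keeping the parameter set $A \bv' \rest i$ very small at every successor stage, so that the 1-type version of the Extension Property may be invoked; since ``very small'' denotes a cardinality gap strictly below~$\kappa$ and both $A$ and~$\alpha$ are very small, this is automatic and presents no real obstacle. In short, the statement reduces to a single pass of the 1-dimensional Extension Property, glued together by Transitivity at successor steps and Finite Character at limits, exactly mirroring the proof of Lemma~\ref{lem:multi-density}.
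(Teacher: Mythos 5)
Your proof is correct and is essentially the same argument as the paper's, just unwound: the paper extracts a maximal partial $A$-automorphism from $\bv$ into $M$ with $P(M)$-independent image via Zorn's lemma, whereas you build the image coordinate by coordinate by transfinite recursion. Your successor step is exactly the paper's extension step (transport the 1-type, invoke the $\Lang$-1-type Extension Property, then glue the two independences by Symmetry, Normality, and Left Transitivity), and your appeal to Finite Character at limit stages is precisely what verifies the chain-closure hypothesis that the paper's Zorn invocation leaves implicit.
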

\begin{proof}
Let $A \subset M$ be very small and $\bv$ be a tuple in $M$ of very small
length.
We must prove that there exists $\bv' \subset M$ such that 
$\bv' \elem^1_A \bv$ and $\bv' \ind_A P$.
Define
\[
\Ffam := \set{f \text{ partial 1-automorphism of } M/A:\ 
\Dom(f) \subseteq \bv,\
\Imag(f) \ind_A P(M)}.
\]
Let $f \in \Ffam$ be a maximal element (Zorn).
I claim that $\Dom(f) = \bv$  (this suffices to prove the conclusion).
Assume not.
Let $\dv := \Dom(f)$ and $e \in \bv \setminus \dv$.
Let $\dv' := f(\dv)$, $q := \tp^1(e/ A \dv)$, and $q' := f(q)$.
The Extension Property for $\Lang$-1-types implies
that there exists $e' \in M$ satisfying $q'$ and such that
$e' \ind_{A \dv'} P(M)$.
Besides, by assumption, $\dv' \ind_A P(M)$; therefore
$e' \dv' \ind_A P$.
Since moreover $e' \dv' \elem^1_A e \dv$, we have  a contradiction.
\end{proof}


\subsection{Imaginary tuples}

Given an independence relation $\ind$ on $\monster$, we do not know if there
always exists an independence relation $\indeq$ on $\monstereq$
extending~$\ind$. 
However, as the next lemma shows, the independence relation $\indeq$, if it exists, it is unique.%
\footnote{Thanks to H.~Adler for pointing this out.}%

\begin{lemma}
Let $\indeq$ be an independence relation on $\monstereq$
extending~$\ind$.
Let $A$, $B$, and $C$ be small subsets of~$\monstereq$.
Then, the following are equivalent:
\begin{enumerate}
\item
$A \indeq_C B$;
\item
There exist $A_0$ and $B_0$ small subsets of~$\monster$, such that 
$A \subseteq  \dcleq(A_0)$,
$B \subseteq  \dcleq(B_0)$, and
for every $C_0$ small subset of $\monster$ with $C \subseteq \dcleq(C_0)$,
there exists $A_0' \subset \monster$ such that
$A_0' \elem_{B_0 C} A_0$ and $A_0' \ind_{C_0} B_0$.
\end{enumerate}
\end{lemma}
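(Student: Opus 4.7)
The approach is to use the Extension axiom of $\indeq$ in $\monstereq$, together with Invariance, Transitivity, and the inclusion $\dcleq(X) \subseteq \mateq(X)$ (a general consequence of the axioms of an independence relation, hence automatic for any $\indeq$ extending $\ind$). In both directions we rely on Remarks~\ref{rem:mat-ind} and~\ref{rem:mat-idempotent} applied in $\monstereq$: whenever $X \subseteq \mateq(Y)$ one has $X \indeq_Y Z$ for all $Z$, and $\indeq_Y$ depends only on $\mateq(Y)$.

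For $(1) \Rightarrow (2)$, fix any $A_0, B_0 \subset \monster$ with $A \subseteq \dcleq(A_0)$ and $B \subseteq \dcleq(B_0)$, and let $C_0 \subset \monster$ with $C \subseteq \dcleq(C_0)$ be arbitrary. Apply Extension of $\indeq$ to obtain $A_0' \elem_{B_0 C} A_0$ with $A_0' \indeq_{B_0 C} C_0$. Let $\sigma \in \aut(\monstereq/B_0 C)$ send $A_0$ to $A_0'$, and set $A' := \sigma(A) \subseteq \dcleq(A_0')$. Since $\sigma$ fixes $B$ and $C$ pointwise, Invariance gives $A' \indeq_C B$. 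Combining this with $A_0' \indeq_{B_0 C} C_0$ via Transitivity, and using $A' \subseteq \mateq(A_0')$, $B \subseteq \mateq(B_0)$, and $C \subseteq \mateq(C_0)$ to absorb $A'$ into $A_0'$ and $B$ into $B_0$ in the matroid closure, one deduces $A_0' \indeq_{C_0} B_0$. Since $\indeq$ extends $\ind$ and all three sets lie in $\monster$, this is exactly $A_0' \ind_{C_0} B_0$.

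For $(2) \Rightarrow (1)$, the plan is to choose $C_0$ cleverly before invoking (2): by Extension of $\indeq$, any initial lift of $C$ can be replaced by a generic $C_0$ with $C \subseteq \dcleq(C_0)$ and $C_0 \indeq_C A_0 B_0 A B$. Applying (2) to this $C_0$ yields $A_0'$ with $A_0' \elem_{B_0 C} A_0$ and $A_0' \indeq_{C_0} B_0$. Setting $A' := \sigma(A)$, the same matroid-closure argument produces $A' \indeq_{C_0} B$. The genericity of $C_0$, transported through $\sigma$, then lets one absorb $C_0$ into the base via Transitivity, giving $A' \indeq_C B$; Invariance finally yields $A \indeq_C B$. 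The main obstacle is precisely this base-reduction: the automorphism $\sigma$ fixes $B_0 C$ but not $C_0$, so the genericity $C_0 \indeq_C A B$ must be tracked carefully through $\sigma$ to supply the exact independence statement needed to strip $C_0$ from the base.
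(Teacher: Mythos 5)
The direction $(1) \Rightarrow (2)$ is broken: you cannot fix arbitrary $A_0, B_0$. For a concrete counterexample, take $A = B = C = \emptyset$ (so $(1)$ holds trivially) and $A_0 = B_0 = a$ for a single $a \in \monster \setminus \mat(\emptyset)$. Then $A_0' \elem_{B_0 C} A_0$ forces $A_0' = a$, and for $C_0 = \emptyset$ the requirement $A_0' \ind_{C_0} B_0$ becomes $a \ind a$, i.e.\ $a \in \mat(\emptyset)$, which fails; so $(2)$ is false for these $A_0, B_0$. The culprit in your argument is the ``absorption'' step: Monotonicity passes from independence of the larger lifts $A_0', B_0$ to independence of the smaller sets $A', B$, never the other way around, so from $A' \indeq_C B$ (with $A' \subseteq \mateq(A_0')$, $B \subseteq \mateq(B_0)$) together with $A_0' \indeq_{B_0 C} C_0$ you cannot extract $A_0' \indeq_{C_0} B_0$. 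The lifts $A_0, B_0$ have to be chosen generically (for instance $A_0 \indeq_{AC} B$ and then $B_0 \indeq_{B C A_0} A_0$), or — following the paper's hint, which is only ``Exercise: first reduce to the case when $A$ and $B$ are subsets of $\monster$'' — one should first replace $A$ and $B$ by suitably independent real sets, after which one may take $A_0 = A$ and $B_0 = B$ and the lifting problem disappears.

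For $(2) \Rightarrow (1)$ you honestly flag the gap and do not close it; and indeed, as you note, $\sigma$ fixes $B_0 C$ but moves $C_0$, so the chosen genericity $C_0 \indeq_C A_0 B_0 A B$ transports by Invariance only to a statement about $\sigma(C_0)$, not $C_0$, which is not the independence $A' \indeq_C C_0$ one needs to strip $C_0$ from the base of $A' \indeq_{C_0} B$. So as written both implications are incomplete: the first because the initial choice of $A_0, B_0$ must be constrained (and the absorption is run in the wrong direction), and the second because the base-reduction step is genuinely missing.
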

\begin{proof}
Exercise: first reduce to the case when $A$ and $B$ are subsets of~$\monster$.
\end{proof}

Remember that $\ind$ is strict if $\mat = \acl$.
\begin{warning*}
It may happen that $\ind$ is strict, but $\indeq$ is not strict:
consider for instance the independence relation $\indacl$ in Example~\ref{ex:2}
(\cf \cite[\S6]{fornasiero-matroids}).
\end{warning*}

\begin{proviso*}
For the remainder of this section, 
we assume that $\indeq$ is an independence relation on $\monster^{\eq}$
extending~$\ind$.
Moreover, we also assume that $P$ is closed in~$\monster$.
\end{proviso*}

\begin{definition}
Assume that $M_P = \pair{M, P(M)}$ is a model of $\Ttwo$.
Let $A \subseteq \Meq$.
\begin{enumerate}
\item
Define $P(A) := \mateq(P) \cap A$. 
Notice that $P(A)$ is the same as before 
if $A$ is a set of \emph{real} elements.
\item
We say that $A$ is \Pindependent if $A \indeq_{P(A)} P$.  
\item
Given a possibly infinite tuple $\av$ from~$\Meq$, $\Ptp(\av)$, the \Ptype
of $\av$, is the information which tell us which members of $\av$ are in
$P(\Meq)$ (again, notice that $\Ptp(\av)$ is the same as before 
if $\av$ is a tuple of real elements).
\end{enumerate}
\end{definition}

The following two lemmas are the analogues of \cite[Remark~3.3]{BPV}.

\begin{lemma}\label{lem:imaginary-density}
If $M_P$ satisfies the Density Property \rom(for $\Lang$-1-types\rom), then 
it satisfies the Density Property for imaginary tuples:
that is, if $\cv$ and $\av$ are very small tuples in
$\Meq$ and $\cv \indeq_{P(\av)} \av$, then there exists $\cv' \in \mateq(P)$, 
such that $\cv' \elem^1_{\av} \cv$.
\end{lemma}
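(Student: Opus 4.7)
The plan is to mirror Lemma~\ref{lem:multi-density}, extending its Zorn-based argument from real to imaginary tuples. I would first reduce to the case of a single imaginary element $c$ by maximizing the family $\Ffam$ of partial $\Lang$-elementary maps $f\colon \cv \to \mateq(P)$ fixing $\av$ pointwise, with $\Dom(f) \subseteq \cv$ and $\Imag(f) \subseteq \mateq(P)$: given density for single imaginary elements, any maximal such $f$ must have $\Dom(f) = \cv$, which yields the conclusion.

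For the single-element case, given imaginary $c$ with $c \indeq_{P(\av)} \av$, I would pass to real representatives and eventually apply Lemma~\ref{lem:multi-density}. Pick a real tuple $b$ with $c \in \dcleq(b)$, and use Extension for $\indeq$ to arrange $b \indeq_c \av P$. By local character of $\indeq$ applied to the elements of $P(\av)$, pick a very small real tuple $\bv_0 \subseteq P$ with $P(\av) \subseteq \mateq(\bv_0)$. Now choose a real very small tuple $\dv$ containing $\bv_0$ as a subtuple and with $\av \cup \bv_0 \subseteq \dcleq(\dv)$, arranged via Extension so that $\dv \indeq_{\av\bv_0} P b$; by construction $\bv_0 \subseteq P(\dv)$, and hence $P(\av) \subseteq \mateq(\bv_0) \subseteq \mateq(P(\dv))$.

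The heart of the proof is the derivation of $b \indeq_{P(\dv)} \dv$, which I would obtain by chaining Symmetry, Transitivity (together with its built-in Monotonicity and Base Monotonicity), and Normality, using the hypothesis $c \indeq_{P(\av)} \av$, the Extension choices, the identity $c \in \dcleq(b)$ (equivalently $\mateq(b) = \mateq(bc)$, so that $b$ and $bc$ behave identically in $\indeq$-statements), and the bridge inclusion $P(\av) \subseteq \mateq(P(\dv))$. Once $b \indeq_{P(\dv)} \dv$ is established, Lemma~\ref{lem:multi-density} furnishes a real $b' \subseteq P$ with $b' \elem^1_\dv b$; letting $\pi$ denote the $\Lang^{\eq}$-definable map sending $b$ to its equivalence class $c$, I set $c' := \pi(b') \in \mateq(P)$, and observe $c' \elem^1_\dv c$, whence $c' \elem^1_\av c$ because $\av \subseteq \dcleq(\dv)$.

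The main obstacle is precisely the central derivation of $b \indeq_{P(\dv)} \dv$: the bases $P(\av)$ (imaginary) and $P(\dv)$ (real) differ, and moving from one to the other is delicate because base monotonicity is one-directional. The bridge $P(\av) \subseteq \mateq(P(\dv))$ provided by the $\bv_0$-construction, combined with the genericity of $b$ over $c$ and of $\dv$ over $\av\bv_0$, is what makes the exchange of bases feasible without recourse to any fact beyond the axioms of $\indeq$.
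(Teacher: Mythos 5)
Your overall strategy --- pass to real representatives, set up enough independence to apply Lemma~\ref{lem:multi-density}, then push the result back to the imaginary sort --- matches the paper's. The details diverge, and there is a genuine gap at the step you yourself flag as the ``heart'' of the proof.

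The problem is the choice of $\bv_0$. You take $\bv_0\subseteq P$ witnessing $P(\av)\subseteq\mateq(\bv_0)$ by local character, with no further constraint. From your Extension choices you can indeed deduce $b\indeq_{P(\av)}\av$, and since $\bv_0\subseteq P(\dv)\subseteq\dv$ base monotonicity reduces the target $b\indeq_{P(\dv)}\dv$ to $b\indeq_{P(\av)}\dv$, which by transitivity against $b\indeq_{\av\bv_0}\dv$ reduces in turn to $b\indeq_{P(\av)}\av\bv_0$, \ie to $b\indeq_\av\bv_0$. At this point you are stuck: the only leverage you have on $\bv_0$ comes from $b\indeq_c\av P$, which gives $b\indeq_{c\av}\bv_0$; to drop $c$ from the base you would need $b\indeq_\av c$, and since $c\in\dcleq(b)$ that forces $c\in\mateq(\av)$ --- which is not given and, if it held, would trivialize the lemma. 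The bridge $P(\av)\subseteq\mateq(P(\dv))$ also points the wrong way: base monotonicity lets you enlarge a base using elements of the set one is independent \emph{from}, whereas here $P(\dv)$ enlarges $P(\av)$ in a direction ($\bv_0$) about which you have established no independence.

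What is missing is exactly the extra constraint the paper imposes. The paper first fixes real representatives $\dv'$ (of $\cv$) and $\bv'_1$ (of $\av\setminus P(\av)$), each moved by Extension into nonforking position, and only then chooses $\bv_0\subseteq P$ \emph{together with} the independence $\bv_0\indeq_{P(\av)}\bv'_1\dv'$; it is this last condition that licenses swapping the base $P(\av)$ for $\bv_0$ in the forking calculus. In your setup $\bv_0$ is chosen before $\dv$ and is bundled into $\dv$ as a subtuple, so there is no room to impose the analogous condition $\bv_0\indeq_{P(\av)}b\dv$. To repair the argument you would need to reorder the choices so that $\bv_0$ is picked last, kept separate from the representative of $\av$, and endowed with the relevant independence over $P(\av)$; at that point the argument essentially becomes the paper's. (A minor remark: the Zorn reduction to a single imaginary element is harmless but superfluous --- the paper treats the tuple $\cv$ all at once, just as Lemma~\ref{lem:multi-density} itself handles small real tuples.)
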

\begin{proof}
Fix $\dv$ a small tuple in $M$ such that
$\cv = [\dv]_E$, for some $\emptyset$-definable equivalence relation~$E$.
Let $\av_0 := P(\av)$ and $\av_1 := \av \setminus P$.
Fix $\bv_1$ small tuple in $M$ such that 
$\av_1 = [\bv_1]_F$, for some 
$\emptyset$-definable equivalence relation~$F$. 
Let $\dv'$ in $M$ such that $\dv' \elem^1_{\av_0 \cv} \dv$ and
$\dv' \indeq_{\av_0 \cv} \av$. 
Since, by assumption, $\cv \indeq_{\av_0} \av$, we have
$\dv' \indeq_{\av_0} \av$.
Notice, moreover, that $[\dv']_E = \cv$.
Let $\bv'_1 \elem^1_{\av} \bv_1$ such that 
$\bv_1' \indeq_{\av} \dv'$; notice that $[\bv'_1]_F = \av_1$.
Moreover, by Transitivity, $\bv'_1 \indeq_{\av_0} \dv'$; 
therefore, $\av_0 \bv'_1 \indeq_{P(\av_0 \bv'_1)} \dv'$. 
Let $\bv_0$ be a small tuple in $P$ such that $\av_0 \in \acleq(\bv_0)$;
moreover, we can choose $\bv_0$ that satisfies 
$\bv_0 \indeq_{\av_0} \bv_1' \dv'$.
Hence, $\bv_1' \ind_{\bv_0} \dv'$: notice that all tuples are real. 
Hence, by Lemma~\ref{lem:multi-density}, there exists $\dv''$ in $P$
such that $\dv'' \elem^1_{\bv_0 \bv'_1} \dv'$.
Define $\cv'' := [\dv'']_E$.
Then, $\cv'' \elem^1_{\av} \cv$ and $\cv'' \in \mateq(P)$.
\end{proof}

\begin{lemma}\label{lem:imaginary-extension}
If $M_P$ satisfies the Extension Property \rom(for $\Lang$-1-types\rom), then 
it satisfies the Extension Property for imaginary tuples:
that is, if $\cv$ and $\av$ are very small tuples in $\Meq$,
then there exists $\cv' \in M^{\eq}$, 
such that $\cv' \elem^1_{\av} \cv$ and $\cv' \indeq_{\av} P$.
\end{lemma}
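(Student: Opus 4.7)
The plan is to mirror the proof of Lemma~\ref{lem:imaginary-density}: reduce the imaginary Extension Property to the real one (the preceding lemma for $\Lang$-types of very small length) by choosing real representatives carefully. Fix $\cv = [\dv]_E$ with $\dv \subset M$ real very small and $E$ a $\emptyset$-definable equivalence relation, and pick any real very small tuple $\bv_0 \subset M$ with $\av \in \dcleq(\bv_0)$ (each imaginary coordinate of $\av$ is the class of a real tuple in $M$). The crucial first step is to replace $\bv_0$ by a better real representative $\bv \in M$ that additionally satisfies $\bv \indeq_\av \dv$. This is done by applying the Extension axiom of $\indeq$ in $\monster^\eq$ to obtain $\bv^* \in \monster$ real with $\bv^* \elem^1_\av \bv_0$ and $\bv^* \indeq_\av \dv$, and then using $\kappa$-saturation of $M$ (over the very small imaginary set $\av\dv$) to find $\bv \in M$ with $\bv \elem^1_{\av\dv} \bv^*$. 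The resulting $\bv$ has $\av \in \dcleq(\bv)$ (since $\bv \elem^1_\av \bv^*$ preserves the $\emptyset$-definable quotient) and $\bv \indeq_\av \dv$ by Invariance of $\indeq$.

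Having secured this good real representative, apply the preceding lemma (Extension for $\Lang$-types of very small length over the real parameter set $\bv \subset M$) to obtain $\dv' \in M$ with $\dv' \elem^1_\bv \dv$ and $\dv' \ind_\bv P$. Set $\cv' := [\dv']_E \in M^\eq$. The condition $\cv' \elem^1_\av \cv$ follows immediately from $\dv' \elem^1_\bv \dv$ together with $\av \in \dcleq(\bv)$. For $\cv' \indeq_\av P$: since $\av \subseteq \mateq(\bv)$ gives $\mateq(\bv) = \mateq(\av\bv)$, Remark~\ref{rem:mat-idempotent} upgrades $\dv' \indeq_\bv P$ to $\dv' \indeq_{\av\bv} P$. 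On the other hand, the relation $\bv \indeq_\av \dv$ depends only on $\tp^1(\bv\dv/\av)$, and $\dv' \elem^1_\bv \dv$ gives $\bv\dv' \elem^1_\av \bv\dv$, so Invariance yields $\bv \indeq_\av \dv'$, i.e.\ $\dv' \indeq_\av \bv$ by Symmetry. Combining $\dv' \indeq_\av \bv$ with $\dv' \indeq_{\av\bv} P$ via Left Transitivity gives $\dv' \indeq_\av \bv P$, and Monotonicity gives $\dv' \indeq_\av P$. Finally, $\cv' \in \dcleq(\dv')$ yields $\cv' \indeq_\av P$, again by Monotonicity.

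The main obstacle is the first step: extracting a real representative $\bv \in M$ of $\av$ that is $\indeq$-independent from $\dv$ over the imaginary base $\av$. This independence cannot be produced from the real Extension Property of $M$ alone (whose parameter sets must be real), so one must invoke the Extension axiom of $\indeq$ in $\monster^\eq$ to establish existence in $\monster$, and then exploit the saturation of $M$ to pull the representative back to $M$. Once $\bv$ is secured, the passage from independence over the real base $\bv$ to independence over the imaginary base $\av$ is a routine combination of Remark~\ref{rem:mat-idempotent} and Left Transitivity, exactly as in the analogous manipulations at the end of Lemma~\ref{lem:imaginary-density}.
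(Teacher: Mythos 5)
Your proof is correct and follows essentially the same route as the paper's: fix real representatives $\bv_0, \dv \subset M$ of $\av$ and $\cv$, replace $\bv_0$ by a representative $\bv$ of $\av$ that is $\indeq$-independent from $\dv$ over $\av$, apply the real Extension Property (for $\Lang$-types of very small length) over the base $\bv$ to get $\dv'$, and then transfer the independence from base $\bv$ to base $\av$ via $\av\in\mateq(\bv)$, Invariance, and Transitivity. The main place where you are more careful than the paper's terse proof is in securing $\bv \in M$: the paper simply writes ``Let $\bv'\elem^1_{\av}\bv$ such that $\bv'\indeq_\av\dv$'' without remarking that $\bv'$ must be chosen inside $M$ to feed it to the Extension Property of $M_P$, whereas you correctly invoke Extension in $\monstereq$ first and then pull the representative back into $M$ by $\kappa$-saturation, rewriting the type over $\av\dv$ as a type over the real set $\bv_0\dv$. (The paper also cites Lemma~\ref{lem:multi-density} where it evidently means the analogous Extension lemma for very small tuples; you used the right one.)
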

\begin{proof}
Fix $\bv$ and $\dv$ small tuples in $M$ such that
$\av = [\bv]_F$ and $\cv = [\dv]_E$, for some $\emptyset$-definable
equivalence relations $E$ and~$F$.
Let $\bv' \elem^1_{\av} \bv$ such that $\bv' \indeq_{\av} \dv$.
Notice that $\av = [\bv']_F$ and that $\av \in \mateq(\bv')$ (because
$\av \in \dcleq(\bv')$).
By Lemma~\ref{lem:multi-density}, there exists $\dv'$ in $M$ such that
$\dv' \elem^1_{\bv'} \dv$ and $\dv' \ind_{\bv'} P$.
Notice that $\dv' \elem_{\bv' \av} \dv$; therefore, since 
$\dv \ind_{\av} \bv'$, we have $\dv' \indeq_{\av} \bv'$, hence, by
transitivity, $\dv' \ind_{\av} P$.
Finally, let $\cv' := [\dv']_E$.
Thus, $\cv' \elem^1_{\av} \cv$ and $\cv \indeq_{\av} P$.
\end{proof}

\begin{lemma}\label{lem:imaginary-type}
Let $M_P$ and $M'_P$ be two $\ind$-lovely pairs for~$T$.
Let $\av$ be a \Pindependent small tuple from~$\Meq$, 
and $\av'$ be a \Pindependent tuple of the same length and the same sorts
from~${M'}^{\eq}$.
If $\av \elem^1 \av'$ and $\Ptp(\av) = \Ptp(\av')$, then
$\av \elem^2 \av'$.
\end{lemma}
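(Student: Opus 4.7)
The plan is to imitate the back-and-forth argument of Proposition~\ref{prop:back-and-forth}, working now inside $\Meq$ and ${M'}^\eq$ and using Lemmas~\ref{lem:imaginary-density} and~\ref{lem:imaginary-extension} in place of the real density and extension properties. Let $\Gamma$ be the family of $\Lang$-elementary bijections $f: \av \to \av'$ between very small \Pindependent tuples $\av \subset \Meq$ and $\av' \subset {M'}^\eq$ of matching sorts satisfying $\Ptp(\av) = \Ptp(\av')$. The goal is to verify that $\Gamma$ has the back-and-forth property.

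Given $f \in \Gamma$ and $b \in \Meq$, the extension step splits into two cases. If $b \in \mateq(P(M))$, then Remark~\ref{rem:mat-ind} gives $b \indeq_P \av$, and \Pindependence of $\av$ combined with Remark~\ref{rem:mat-idempotent} yields $b \indeq_{P(\av)} \av$. Transporting $q := \tp^1(b/\av)$ along $f$ produces $q' \in S^1(\av')$ with $q' \indeq_{P(\av')} \av'$, since $f(P(\av)) = P(\av')$. I realize $q'$ in ${M'}^\eq$ by $\kappa$-saturation (Remark~\ref{rem:density-substructure}), then apply Lemma~\ref{lem:imaginary-density} in $M'_P$ to obtain $b' \in \mateq(P(M'))$ with $\av' b' \elem^1 \av b$; the enlarged tuples remain \Pindependent with matching \Ptype since $b, b' \in \mateq(P)$. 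If instead $b \notin \mateq(P(M))$, I pad first: local character of $\indeq$ yields a very small $\fv \subset \mateq(P(M))$ with $\av b \indeq_\fv P$, and normality together with base monotonicity upgrade this to \Pindependence of $\av b \fv$. Iterating the first case builds $\fv' \subset \mateq(P(M'))$ with $\av \fv \elem^1 \av' \fv'$ inside $\Gamma$. Lemma~\ref{lem:imaginary-extension} applied in $M'_P$ to a realization in ${M'}^\eq$ of the transport of $\tp^1(b/\av \fv)$ then produces $b' \in {M'}^\eq$ with $\av' \fv' b' \elem^1 \av \fv b$ and $b' \indeq_{\av' \fv'} P$.

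The delicate step is verifying $b' \notin \mateq(P(M'))$ in this second case. Assuming otherwise, the final Remark of \S\ref{sec:preliminary} applied to $b' \indeq_{\av' \fv'} P$ gives $b' \in \mateq(\av' \fv') \cap \mateq(P)$, and the same Remark applied to \Pindependence of $\av' \fv'$ forces $b' \in \mateq(\fv' P(\av'))$; by $\Lang$-elementarity via the extended $f$, this yields $b \in \mateq(\fv P(\av)) \subseteq \mateq(P(M))$, contradicting the case. Hence $\Gamma$ has the back-and-forth property, so $\av \elem^2 \av'$ whenever $\av$ is very small. For general small $\av$, any $\Ltwo$-formula $\phi$ involves only finitely many coordinates $F$: setting $\bv := \av|_F$ and applying local character of $\indeq$ inside $P(\av)$ (using $\bv \indeq_{P(\av)} P$ inherited from \Pindependence of $\av$ by monotonicity), I obtain a very small $\cv \subset P(\av)$ such that $\bv \cv$ is \Pindependent, and symmetrically so is the matching $\bv' \cv' \subset \av'$; the very small case then yields $\phi(\av|_F) \iff \phi(\av'|_F)$. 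The hard part will be the verification that $b' \notin \mateq(P(M'))$ in Case~2, the one place where the interplay between $\mateq$, $\indeq$, and \Pindependence must be tracked delicately, mirroring the analogous obstruction in Proposition~\ref{prop:back-and-forth}.
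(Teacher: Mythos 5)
Your proof follows essentially the same route as the paper's: the same back-and-forth family $\Gamma$ of \Pindependent very small partial isomorphisms, the same case split on whether the new element lies in $\mateq(P)$, Lemma~\ref{lem:imaginary-density} in Case~1, padding plus Lemma~\ref{lem:imaginary-extension} in Case~2, and the same closure-intersection argument (via the last Remark of \S\ref{sec:preliminary}) to rule out $b' \in \mateq(P')$. The one thing you add beyond the paper is the explicit reduction from small tuples to very small tuples via local character in $P(\av)$ and monotonicity, a gap the paper's proof passes over silently but which is genuinely needed since the statement is for small tuples while $\Gamma$ only quantifies over very small ones; this is a welcome clarification and it is correct.
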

\begin{proof}
Again, by a back-and-forth argument.
Denote $P := P(M)$ and $P' := P(M')$.
Let $\av_0 := P(\av)$ and $\av'_0 := P(\av')$.
Let
\begin{multline*}
\Gamma := \bigl\{f: \av \to \av': \av \subset \Meq,\ \av' \subset \Mpeq,\
\av \et \av' \text{ very small},\ 
f \text { bijection},\\
\av \et \av' \text{ \Pindependent{}},\
\av \elem^1 \av',\
\Ptp(\av) = \Ptp(\av')\bigr\}.
\end{multline*}
We want to prove that $\Gamma$ has the back-and-forth property.
So, let $f: \av \to \av'$ be in $\Gamma$, and $\cv \subset\Meq \setminus \av$
be a small tuple;
we want to find $g \in \Gamma$ such that $g$ extends $f$ and $\cv$ is
contained in the domain of~$g$.
We can reduce ourselves to two cases.

\Case 1 $\cv \subset \mateq(P)$.
Let $q := \tp^1(\cv/\av)$ and $q' := f(q)$.
Since $\av$ is \Pindependent, we have $q \indeq_{\av_0} \av$, and hence
$q' \indeq_{\av'_0} \av'$.
Therefore, by the Density property in Lemma~\ref{lem:imaginary-density}, 
there exists $\cv' \subset \mateq(P')$ satisfying $q'$;
extend $f$ to $\av \cv$ setting $f(\cv) = \cv'$.

\Case 2 $\cv \subset \Meq \setminus \mateq(P)$.
Let $\pv_0$ be a small subset of $P$ such that
$\cv \pv_0 \av$ is \Pindependent.
By Case~1, \wloG $\pv_0 \subseteq \av_0$, \ie $\cv \av$ is \Pindependent,
that is $\cv \av \ind_{\av_0} P$.
Let $q := \tp^1(\cv/\av)$ and $q' := f(q)$.
By Lemma~\ref{lem:imaginary-extension},
there exists $\cv' \subset \Mpeq$ satisfying $q'$ such that 
$\cv'\indeq_{\av'} P'$. 
\begin{claim}
$\cv' \cap \mateq(P') = \emptyset$.
\end{claim}
Assume, for contradiction, that $c_0 \in \cv' \cap \mateq(P')$.
Since $\cv' \indeq_{\av'} P'$, we have
$c_0 \in \mateq(\av') \cap \mateq(P') = \mat(\av'_0)$
hence, $c_0 \in \mat(\av) \subseteq \mateq(P)$, absurd.

Thus, $\cv \av$ and $\cv' \av'$ have the same \Ptype and the same
$\Lang$-type.
Moreover, by transitivity, $\cv' \av' \indeq_{\av'_0} P'$, that is
$\cv' \av'$ is \Pindependent.
Thus, we can extend $f$ to $\cv \av$ setting $f(\cv) = \cv'$.
\end{proof}

\section{Lowness and equivalent formulations of loveliness} 
\label{sec:low}

Let $M_P = \pair{M, P(M)} \models \Ttwo$.

\begin{remark}
The Density Property for $M_P$ is equivalent to:
\begin{sentence}[(\S)]
For every $A$ very small subset of~$M$ and $q \ind S^1_1(A)$, 
if $q \ind_{P(M)} A$, then $q$ is realized in~$P(M)$.
\end{sentence}
\end{remark}
\begin{proof}
Let $A \subset M$ be very small.

The proof that the Density Property implies (\S) 
is as in~\cite[Remark~3.4]{BPV}: given
$c \in \monster$ such that $c \ind_{P(M)} A$,
let $P_0 \subset P(M)$ very small such that  $P(M) \ind_{P_0} A$;
by Transitivity, $c \ind_{P(A) P_0} A$, and therefore, by the Density Property,
there exists $c' \in P(M)$ such that \mbox{$c' \elem^1_{P_0 A} c$}.

For the converse, assume that $\pair{M, P(M)}$ satisfies (\S),
and let $q \in S^1_1(A)$ such that $q \ind_{P(A)} A$.
Let $r \in S^1_1(A \cup P(M))$ be a non-forking extension of~$q$.
Let $c \in \monster$ be a realization of~$r$.
By transitivity, we have that $c \ind_{P(A)} A P(M)$, and therefore
$c \ind_{P(M)} A$.
Hence, by (\S), there exists $c' \in P(M)$ such that 
$c \equiv^1_A c$, and hence $c'$ is a realization of $q$ in $P(M)$.%
\footnote{Thanks to E.~Vassiliev for the proof.}
\end{proof}

\begin{lemma}\label{lem:density-saturated}
Assume that $M_P$ is $\kappa$-saturated. Then, \tfae:
\begin{enumerate}
\item $M_P$ satisfies the Density property;
\item for every $A \subset M$ very small, for every $\Lang$-formula $\phi(x)$
in 1 variable with parameters from~$A$,
if $\phi$ does not fork over $P(A)$, then $\phi$ is realized in $P(M)$.
\item for every $A \subset M$ very small, for every $\Lang$-formula $\phi(\x)$
in many variables with parameters from~$A$,
if $\phi$ does not fork over $P(A)$, then $\phi$ is realized in $P(M)$.
\end{enumerate}
\end{lemma}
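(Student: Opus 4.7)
The plan is to prove the chain $(3) \Rightarrow (2) \Rightarrow (1) \Rightarrow (3)$. The implication $(3) \Rightarrow (2)$ is trivial: it is just the specialization of a many-variable formula to a one-variable one. The implication $(1) \Rightarrow (3)$ is essentially already done in the paper: if $\phi(\x)$ is an $\Lang$-formula in variables $\x$ (of very small length) with parameters from $A$, and $\phi$ does not fork over $P(A)$, then by definition of non-forking for a partial type there exists a complete $\Lang$-type $q(\x) \in S^1(A)$ extending $\phi$ with $q \ind_{P(A)} A$; then Lemma~\ref{lem:multi-density}, which upgrades the Density Property from $1$-types to types of very small length, produces a realization of $q$ inside $P(M)$, which of course also realizes~$\phi$.

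The interesting direction is $(2) \Rightarrow (1)$, which is where the saturation hypothesis is actually used. Given $A$ very small and $q \in S^1_1(A)$ with $q \ind_{P(A)} A$, I want to produce a single element of $P(M)$ realizing~$q$. For any finite conjunction $\phi(x)$ of formulas from~$q$, the formula $\phi$ itself is in~$q$, so it does not fork over~$P(A)$ (via the complete non-forking witness~$q$); by hypothesis~(2), $\phi$ is realized in $P(M)$. Hence the partial $\Ltwo$-type
\[
\Sigma(x) \;:=\; q(x) \cup \set{P(x)}
\]
is finitely satisfiable in $M_P$, has parameters in $A$ with $\card A < \kappa$, and is therefore realized in $M_P$ by $\kappa$-saturation. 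Any such realization lies in $P(M)$ and satisfies~$q$, establishing the Density Property.

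The only potential subtlety is the step ``each formula in $q$ does not fork over $P(A)$'': one has to remember that forking of a formula is defined as the \emph{existence} of some complete non-forking type containing it, and this is witnessed by $q$ itself. Apart from that bookkeeping, everything reduces to routine use of Lemma~\ref{lem:multi-density} and of $\Ltwo$-saturation of the pair, so no further obstacle is expected.
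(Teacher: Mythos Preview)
Your proposal is correct and follows essentially the same route as the paper: the implication $(2)\Rightarrow(1)$ is handled exactly as in the paper by forming the partial $\Ltwo$-type $q(x)\cup\{P(x)\}$, checking its finite satisfiability via~(2), and invoking $\kappa$-saturation; the implication $(1)\Rightarrow(3)$ is the paper's argument as well, extending $\phi$ to a complete nonforking type and applying Lemma~\ref{lem:multi-density}. The only cosmetic difference is that the paper also records $(1)\Rightarrow(2)$ directly, whereas your cycle $(3)\Rightarrow(2)\Rightarrow(1)\Rightarrow(3)$ makes that step superfluous.
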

\begin{proof}
$(1 \Rightarrow 2)$.
Let $q(x) \in S^1_1(A)$ extending $\phi(x)$ such that $q \ind_{P(A)} A$.
Choose $c \in P(M)$ realizing $q(x)$.
(Notice that we did not use the fact that $M_P$ is $\kappa$-saturated).

$(2 \Rightarrow 1)$.
Let $q \in S^1_1(A)$ be a complete $\Lang$-1-type over some very small set
$A$, such that $q \ind_{P(A)} A$.
Consider the following partial $\Lang^2$-1-type over~$A$:
$\Phi(x) := q(x) \et x \in P$.
By (2), $\Phi$ is consistent, and hence, by saturation, realized in $P(M)$.

$(3 \Rightarrow 2)$ is trivial, and $(1 \Rightarrow 3)$ follows as in
$(1 \Rightarrow 2)$ using Lemma~\ref{lem:multi-density}.
\end{proof}

\begin{definition}
Let $\x$ and $\y$ be finite tuples of variables.
Fix a formula $\phi(\x, \y)$.  Let $\z$ be very small tuple, and define
\[
\Sigma_{\phi, \z}(\y, \z) := \set{\pair{\bv, \cv}: \phi(\x, \bv) \text{ forks over } \cv}.
\]
We say that $\ind$ is \intro{low} if $\Sigma_\phi(\y, \z)$ is type-definable,
for every formula $\phi$.
\end{definition}

\begin{remark}
When $T$ is simple and $\indf$ is Shelah's forking, then $\indf$ is low iff $T$ is a low simple theory.
Moreover, if $T$ is stable, then $T$ (and hence~$\indf$) is low.
See \cite{BPV} for definitions and proofs.
\end{remark}

\begin{corollary}[{\cite[4.1]{BPV}}]\label{cor:low}
If $\ind$ is low iff the Density property is first order.
If $\ind$ is low, the axiomatization for the Density property is:
\begin{equation}
\label{eq:density}
(\forall \bv)\ (\forall \cv \in P)\
\Pa{\pair{\bv, \cv} \notin \Sigma_{\phi, \z}(\y, \z) \Rightarrow 
(\exists a \in P)\ \phi(a, \bv, \cv)},
\end{equation}
where $\phi(x, \y)$ varies among all the $\Lang$-formulae, with $x$ a single
variable, $\y$ and $\bv$ are finite tuples of variables of the same length, 
and $\z$ and $\cv$ are very small tuples of variables of the same length.
\end{corollary}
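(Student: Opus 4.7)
Plan: The corollary bundles two claims: that lowness of $\ind$ is equivalent to first-order axiomatizability of the density property, and that when $\ind$ is low the schema \eqref{eq:density} is an explicit axiomatization. I will establish the forward implication by directly analyzing \eqref{eq:density}, and sketch the converse via a compactness pullback from lovely pairs.

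Assume $\ind$ is low. For each $\Lang$-formula $\phi(x, \y)$ and each length of $\z$, lowness produces a partial type $\pi_{\phi, \z}(\y, \z)$ type-defining $\Sigma_{\phi, \z}$, so the clause ``$\pair{\bv, \cv} \notin \Sigma_{\phi, \z}$'' in \eqref{eq:density} unfolds as $\bigvee_{\psi \in \pi_{\phi,\z}} \neg \psi(\bv, \cv)$; consequently \eqref{eq:density} is equivalent to the first-order schema consisting, for each $\psi \in \pi_{\phi, \z}$, of the sentence $(\forall \bv)(\forall \cv \in P)\Pa{\psi(\bv, \cv) \vee (\exists a \in P)\,\phi(a, \bv, \cv)}$. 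Next, I verify equivalence with the density property. For the easy direction, if $M_P$ satisfies density and $\phi(x, \bv, \cv)$ does not fork over $\cv \in P(M)^{|\z|}$, put $A \coloneqq \bv\cv$; since $\cv \subseteq P(A)$, Extension together with Transitivity lifts non-forking from $\cv$ to $P(A)$, and then Lemma~\ref{lem:density-saturated}(1 $\Rightarrow$ 2) (whose proof does not invoke saturation) delivers a realization of $\phi$ in $P(M)$, verifying the schema. Conversely, if $M_P$ is $\kappa$-saturated and models the schema, I verify clause (2) of Lemma~\ref{lem:density-saturated}: given an $\Lang$-formula $\phi(x)$ with parameters in very small $A$ not forking over $P(A)$, a witness $c$ in $\monster$ together with local character yields a subtuple $\cv \subseteq P(A)$ of length $< \kappa_0$ with $c \ind_\cv P(A)$, hence $c \ind_\cv A$ by Transitivity; isolating the parameters of $\phi$ outside $P(A)$ as a finite tuple $\bv$, the schema instance for $(\phi, |\cv|)$ realizes $\phi$ in $P(M)$, and Lemma~\ref{lem:density-saturated}(2 $\Rightarrow$ 1) closes the loop.

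For the converse direction, suppose density is first-order, axiomatized by some $\Ltwo$-theory $T'$ extending $\Ttwo$. Work inside a sufficiently saturated $\Mp \models T'$, which by hypothesis is a lovely pair. Fix $\phi(\x, \y)$ and a finite length $|\z|$. Using the Extension property and the Density property together, for $\bv \in \monster$ and $\cv \in P$, $\phi(\x, \bv)$ forks over $\cv$ if and only if no realization of $\phi(\x, \bv)$ in $\monster$ is independent from $P$ over $\cv$. Combined with the $\Ltwo$-axioms of $T'$ and a compactness-automorphism argument along the lines of~\cite[4.1]{BPV}, this identifies $\Sigma_{\phi, \z}$ as the pullback of a type-definable set through the forgetful functor, which yields a $T$-partial type defining $\Sigma_{\phi, \z}$ and hence lowness.

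The principal obstacle is the converse: translating $\Ltwo$-axiomatizability of density into $T$-level type-definability of forking, which requires carefully extracting forking information from the density axioms as they operate within a lovely pair. A secondary technical point in the forward direction arises when $\ind$ is not superior, so that local character produces an infinite (but very small) $\cv$; this is handled by letting the schema quantify over all admissible lengths of $\z$, since the lowness hypothesis provides $\pi_{\phi, \z}$ for each such length.
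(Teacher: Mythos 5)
Your forward direction is essentially the paper's: lowness makes \eqref{eq:density} an honest first-order schema, and equivalence with the Density property is extracted from Lemma~\ref{lem:density-saturated} (and you correctly note that $(1\Rightarrow 2)$ there needs no saturation). Minor quibble: the ``lift from $\cv$ to $P(A)$'' is just Base Monotonicity ($\cv \subseteq P(A) \subseteq A$), not Extension.

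The converse is where your proposal has a genuine gap. You state, without justification, a characterization ``$\phi(\x,\bv)$ forks over $\cv$ iff no realization of $\phi(\x,\bv)$ is independent from $P$ over $\cv$,'' which is neither obviously true nor obviously what is needed, and then you wave at ``a compactness-automorphism argument along the lines of \cite[4.1]{BPV}'' without saying what gets compactified or why a type-definable $\Ltwo$-set pulls back to a type-definable $T$-set. You yourself flag this as the principal obstacle; it remains unaddressed. The paper's actual argument is concrete and worth knowing: to show $\Sigma_{\phi,\z}$ is type-definable it suffices to show it is closed under ultraproducts. Given $\pair{\bv_i,\cv_i}\in\Sigma_{\phi,\z}$, Lemma~\ref{lem:lovely-existence} produces, for each $i$, a lovely pair $\pair{N_i,P_i}$ with $\cv_i\subseteq P_i$ and $\bv_i\ind_{\cv_i}P_i$; by Lemma~\ref{lem:density-saturated}, $\phi(\x,\bv_i)$ is not realized in $P_i$. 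Taking an ultraproduct and passing to a $\kappa$-saturated elementary extension $\pair{M,P}$, the first-orderness of Density guarantees $\pair{M,P}$ still satisfies it, while $\phi(\x,\bv)$ is still unrealized in $P$; Lemma~\ref{lem:density-saturated} then forces $\phi(\x,\bv)$ to fork over $\cv$, i.e. $\pair{\bv,\cv}\in\Sigma_{\phi,\z}$. This is the step your sketch needs to be replaced by: the first-order Density axioms survive the ultraproduct, and it is exactly that survival that pins down forking at the limit.
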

Notice that if $\ind$ is low, then \eqref{eq:density} is indeed given by
a set of axioms.
\begin{proof}
Assume that $\ind$ is low.
Let $M_P$ be a $\kappa$-saturated model of $\Ttwo$.
We have to prove that $M_P$ satisfies the Density property iff
\eqref{eq:density} holds.
Notice that \eqref{eq:density} is equivalent to:\\
``If $\phi(x, \bv)$ does not fork over $\cv$, with $\cv$ very small tuple 
in $P(M)$, then $\phi(x, \bv)$ is satisfied in~$P(M)$''.\\
By Lemma~\ref{lem:density-saturated}, this is equivalent to the Density
property.

Conversely, assume that the Density property is first order.
Fix an $\Lang$-formula $\phi(\x, \y)$; we must show that
$\Sigma := \Sigma_{\phi, \z}(\y, \z)$ is preserved under ultraproducts.
Assume that, for every $i$ in some index set $I$,
$\pair{\bv_i, \cv_i} \in \Sigma$, that is
$\phi(\x, \bv_i)$ forks over $\cv_i$ (where $\length{\bv_i} = \length{\y}$,  
and $\cv_i$ are very small tuples all of the same length).
By Lemma~\ref{lem:lovely-existence}, for every $i \in I$ there exists
a lovely pair $\pair{N_i, P_i}$ such that $\cv_i \in P_i$ and
$\bv_i \ind_{\cv_i} P_i$.
Let $\pair{N, P(N), \bv, \cv}$ be an ultraproduct of the 
$\pair{N_i, P_i, \bv_i, \cv_i}$, and let
$\pair{M, P}$ be a $\kappa$-saturated elementary extension of
$\pair{N, P(N)}$.
\begin{claim}
$\phi(\x, \bv)$ is not realized in~$P$.
\end{claim}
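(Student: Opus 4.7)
The plan is a coordinate-wise argument finished by \L{}o\'s's theorem. Recall from the setup that each $\pair{N_i, P_i}$ is a lovely pair with $\cv_i \subseteq P_i$, $\bv_i \ind_{\cv_i} P_i$, and $\phi(\x, \bv_i)$ forks over $\cv_i$ in $N_i$. The key step is to establish the coordinate-wise version of the claim: for every $i \in I$, the formula $\phi(\x, \bv_i)$ has no realization in $P_i$.

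To verify this coordinate-wise assertion, I would argue as follows. Suppose for contradiction that some $a \in P_i$ satisfies $\phi(\x, \bv_i)$. From $\bv_i \ind_{\cv_i} P_i$, Symmetry gives $P_i \ind_{\cv_i} \bv_i$, and Monotonicity (which is subsumed under Left Transitivity, per the footnote to the axioms) then yields $a \ind_{\cv_i} \bv_i$. Taking $a$ as a witness, this shows that the partial type $\set{\phi(\x, \bv_i)}$ does not fork over $\cv_i$, contradicting $\pair{\bv_i, \cv_i} \in \Sigma_{\phi, \z}$.

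With the coordinate-wise assertion in hand, the $\Ltwo$-sentence $\neg (\exists x \in P)\ \phi(x, \bv)$, whose parameter $\bv$ comes from the ultrapower, holds in each $\pair{N_i, P_i}$, so \L{}o\'s's theorem transfers it to $\pair{N, P(N)}$; then elementarity of the extension $\pair{N, P(N)} \preceq \pair{M, P}$ passes it up to $\pair{M, P}$, which is the claim. I do not anticipate a substantive obstacle here, since the argument is just a non-forking computation lifted via \L{}o\'s. The only bookkeeping point worth flagging is that the $\bv_i$'s have uniform length $\length{\y}$ and the $\cv_i$'s uniform very small length $\length{\z}$ (fixed at the outset in the definition of $\Sigma_{\phi, \z}$), so that the ultraproduct and \L{}o\'s's theorem apply uniformly across $I$.
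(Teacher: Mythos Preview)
Your proof is correct and follows the same strategy as the paper: establish the coordinate-wise non-realization in each $P_i$, then transfer via \L{}o\'s and elementarity. The paper's coordinate-wise step is phrased as an appeal to the Density property, but your direct use of $\bv_i \ind_{\cv_i} P_i$ together with Symmetry and Monotonicity is exactly what makes that step go through, so the two arguments are essentially the same.
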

In fact, for every $i \in I$, since   $\phi(\x, \bv_i)$ forks over $\cv_i$,
and $\pair{N_i, P_i}$ satisfies the Density property, $\phi(\x, \bv_i)$ is not
realized in~$P_i$; thus, $\phi(\x, \bv)$ is not realized in $P(N)$, and hence
not in~$P$.

Moreover, since the Density property is first order, $\pair{M, P}$ satisfies
it.
Therefore, by Lemma~\ref{lem:density-saturated}, $\phi(\x, \bv)$ forks over~$\cv$.
\end{proof}


\subsection{The rank 1 case}
\label{subsec:rank-one}


In this subsection we will study more in details the case when $\ind$ is superior and $\Uind(\monster) = 1$.

\begin{remark}
Let $\monster$ be pregeometric (that is, $\acl$ has the Exchange property).
Then, $\indacl\,$ is superior and $\Uacl(\monster) = 1$.
If $V \subseteq \monster$ is definable, then $V$ is infinite iff $\Uacl(V) = 1$.
\end{remark}

\begin{proviso*}
For the remainder of this subsection, $\ind$ is superior and $\Uind(\monster) = 1$.
Moreover, we denote $\U \coloneqq \Uind$.
Finally, $M$~is a small model and $M_P = \pair{M, P(M)} \models \Ttwo$.
\end{proviso*}

\begin{lemma}
If $M_P$ satisfies the Density Property, then
\begin{enumerate}
\item $P(M)$ is an elementary substructure of~$M$;
\item 
$P(M)$ is $\kappa$-saturated;
\item 
$P(M)$ is \intro{$\mat$-dense} in $M$: that is,
for every $T$-definable subset $V$ of $M$, if $\U(V) = 1$, then $P(M) \cap V
\neq \emptyset$. 
\end{enumerate}
Conversely, if $M_P$ is $\kappa$-saturated and satisfies conditions
(1), (2), and (3), then $M_P$ satisfies the Density Property.
\end{lemma}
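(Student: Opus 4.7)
For the forward direction, parts (1) and (2) are already contained in Remark~\ref{rem:density-substructure}, so only (3) requires new work. Suppose $V \subseteq M$ is $T$-definable over a very small $A \subset M$ with $\U(V) = 1$, and pick $c \in V$ with $\U(c/A) = 1$ (such $c$ exists by the definition of $\U(V) = 1$). Since $P(A) \subseteq A$ and $\Uind(\monster) = 1$, the chain $1 = \U(c/A) \le \U(c/P(A)) \le 1$ forces $\U(c/P(A)) = \U(c/A)$; by superiority of $\ind$ this equality is equivalent to $c \ind_{P(A)} A$. Hence $q := \tp^1(c/A)$ does not fork over $P(A)$, and the Density Property delivers $c' \in P(M)$ realising $q$; in particular $c' \in V \cap P(M)$.

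For the converse, I appeal to Lemma~\ref{lem:density-saturated}, which thanks to the $\kappa$-saturation of $M_P$ reduces the task to showing that every $\Lang$-formula $\phi(x)$ over a very small $A \subset M$ that does not fork over $P(A)$ is realised in $P(M)$. Pick $c \models \phi$ with $c \ind_{P(A)} A$, and let $V \subseteq M$ be the solution set of $\phi$; I split on $\U(V)$. When $\U(V) = 1$, condition (3) applied directly to $V$ yields a point of $V \cap P(M)$. When $\U(V) = 0$ the formula $\phi$ is algebraic, so $c \in \mat(A)$; combining this with $c \ind_{P(A)} A$ via transitivity gives $c \in \mat(P(A))$, and elementarity (1) together with $\kappa$-saturation (2) of $P(M)$ then let one realise $\phi$ inside $P(M)$, since an algebraic formula over $P(A) \subseteq P(M)$ has the same finite realization set in $P(M)$ as in $M$.

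The principal technical point I anticipate is the algebraic (rank-$0$) case of the converse, where one must bridge between $\mat$-algebraicity of $c$ over $P(A)$ and the genuine inclusion $c \in P(M)$; this is where conditions (1) and (2) do the real work, while the rank-$1$ case is essentially immediate from (3), and the forward direction is a short rank computation exploiting $\Uind(\monster) = 1$.
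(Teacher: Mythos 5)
Your forward direction is correct and matches the paper: since $\Uind(\monster)=1$, any $c\in V$ of full rank over $A$ automatically satisfies $c\ind_{P(A)}A$, and the Density Property then deposits a realization of $\tp^1(c/A)$, hence a point of $V$, into $P(M)$; your explicit rank chain just unpacks the paper's observation that a definable set of rank $\U(\monster)$ forks over no set. For the converse you also follow the paper's route (reduce to single formulas via Lemma~\ref{lem:density-saturated}, then split on $\U(V)\in\{0,1\}$), and the $\U(V)=1$ branch is handled correctly by (3).

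The genuine gap is in the $\U(V)=0$ branch. You identify ``$\U(V)=0$'' with ``$\phi$ is algebraic'' and then argue as for a formula with finitely many solutions, all lying in $\acl(P(A))$. That identification is only valid when $\ind$ is strict (that is, $\mat=\acl$); this subsection's proviso assumes only that $\ind$ is superior of rank~$1$, and for a non-strict closure such as $\mat_E$ from Example~\ref{ex:2} a rank-$0$ set can be infinite, while knowing $c\in\mat(P(A))$ for some $c\in M$ does not by itself place $c$ in $P(M)$. The paper's own argument for this case does not mention algebraicity at all: it invokes the closedness of $P(M)$ in $M$ (that is, $\mat(P(M))\cap M=P(M)$). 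That is exactly what Remark~\ref{rem:density-substructure} yields from the Density Property, but it is \emph{not} listed among the converse hypotheses (1)--(3) as stated. So the mechanism that actually carries the rank-$0$ step is $\mat$-closedness of $P(M)$, not $\acl$-algebraicity; in the strict case your argument does recover closedness through $P(M)\prec M$, but in the general superior rank-$1$ setting your proof silently assumes strictness, and the closure condition must either be added to (1)--(3) or derived before the rank-$0$ case goes through.
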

\begin{proof}
Assume that $M_P$ satisfies the Density property.
(1) and (2) follow from Remark~\ref{rem:density-substructure}.
For (3), let $V \subset M$ be $T$-definable with parameters~$\av$, such that $\U(V) = 1$.
Notice that $V$ does not fork over any set,
because $\U(M) = 1 = \U(V)$, 
and in particular $V$ does not fork over $P(\av)$.
Let $q(x) \in S^1_1(\av)$ expanding $x \in V$ and such that 
$q \notind_{P(\av)} \av$.
By the Density property, $q$~is realized in $P(M)$ and therefore $V \cap P(M)$
is nonempty. 

For the converse, assume that $M_P$ is $\kappa$-saturated and
satisfies the conditions in the lemma.
Let $A \subset M$ be very small and $q \in S^1_1(A)$ such that
$q \ind_{P(A)} A$.
If $\U(q) = 0$, then $\U\Pa{q \rest{P(A)}} = 0$, and hence, since
$P(M)$ is closed in~$M$, all realization of $q$ are in~$P(M)$, and in particular $q$ is realized in~$P(M)$.
Otherwise, $\U(q) = 1$.
Thus, for every $V \coloneqq \phi(x, \av) \in q(x)$, $\U(V) = 1$, 
and therefore $V \cap P(M) \neq \emptyset$.
Therefore, $q$~is finitely satisfiable in~$P(M)$, and hence, by saturation, 
$q$~is satisfiable in~$P(M)$.
\end{proof}

\begin{example}
There exists a pregeometric structure $\monster$ such that $\indacl\,$ is not
low.
In fact, let $\monster$ be a monster model of $T := \Theory(\pair{\Z, <})$.
We have $a \in \acl(b)$ iff $\abs{a - b}$ is finite, and
$\acl(B) = \bigcup_{b \in B} \acl(b)$.
We shall prove that, for every $M_P \models \Ttwo$, if $P(M) \neq M$, $P(M)$
is algebraically closed in~$M$, and 
$M_P$ is $\omega$-saturated, then $P$ is not dense in $M$ (and
therefore $M_P$ does not satisfy the Density Property).
Let $a \in M \setminus P(M)$, and let
$q(y)$ be the following partial $S^2_1$-type over $a$:
\[
a < y \et y - a = \infty \et [a,y] \cap P = \emptyset.
\]
Notice that $q(y)$ if finitely satisfiable in $M$, and hence, by saturation,
there exists $b \in M$ satisfying it.
Thus, $[a, b]$ is a $T$-definable infinite set that does not intersect~$P(M)$.
\end{example}

\begin{definition}
Let $f: \monster^m \leadsto \monster ^n$ be an \intro{application} (that is, a
multi-valued partial function); we say that $f$ is a \intro{\zapplication{}} if
$f$ is definable and $\U(f(\cv)) \leq 0$ is finite for every $\cv \in
\monster^m$. 
\end{definition}

\begin{definition}
We say that ``$\U$ is definable'' if, for every formula $\phi(\x, \y)$ and every $n \in \Nat$, the set
$\set{\bv \in \ \monster^m: \U \Pa{\phi(\monster^n, \bv)} = n}$ is definable, with the same parameters as~$\phi$.
\end{definition}

\begin{remark}
$\U$ is definable iff, for every formula $\phi(x, y)$ without parameters (where $x$ and $y$ have length~$1$), the set
$\set{b \in \ \monster: \U \Pa{\phi(\monster^n, b)} = 0}$ is definable without parameters.
\end{remark}

\begin{remark}
If $\monster$ is pregeometric, then $\Uacl$ is definable iff $\monster$
eliminates the quantifier~$\exists^\infty$.
\end{remark}

\begin{remark}
If $\U$ is definable, then loveliness is first-order.
The axioms of $\Td$ are:
\begin{description}
\item[Closure] $P(M)$ is closed in~$M$;
\item[Density] for every $V$ $T$-definable subset of~$M$,
if $\U(V) = 1$, then intersects~$P(M)$;
\item[Extension]
let $V$ be a $T$-definable subset of $M$ with $\U(V) = 1$ and 
$f: M^n \leadsto M$ be any $T$-definable \zapplication;
then, $V \nsubseteq f(P(M)^n)$.
\end{description}
In particular, if $\monster$ is geometric and $\ind = \indacl$, then the axioms of $\Td$ are:
\begin{description}
\item[Closure]
$P(M) \prec M$;
\item[Density]
for every $V$ $T$-definable subset of~$M$,
if $V$ is infinite, then $V$ intersects~$P(M)$;
\item[Extension]
let $V$ be a $T$-definable infinite subset of $M$ with $\U(V) = 1$ and $\bv$ be a finite tuple in~$M$;
then, $V \nsubseteq \mat(\bv P(M))$.
\end{description}
In both cases, if $\monster$ expands an integral domain, then the Extension axiom can be proved from the first two axioms (\cite[Theorem~8.3]{fornasiero-matroids}).
\end{remark}


Conversely, we have the following result.
\begin{proposition}
Assume that loveliness is first-order.
Then, $\U$~is definable.
\end{proposition}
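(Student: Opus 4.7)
The plan is to use the remark preceding the proposition, so it suffices to show that for every $\Lang$-formula $\phi(x,y)$ without parameters (with $x,y$ single variables) the set $A:=\set{b\in\monster:\U(\phi(\monster,b))=0}$ is $\Lang$-definable without parameters; note that $A$ is automatically $\aut(\monster)$-invariant. I will compute $A$ inside a monster model $\pair{\monster,P}$ of $\Td$, which by hypothesis is a sufficiently saturated lovely pair.

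The heart of the argument is the following local claim for $b\in P$:
$$b\in A \iff \phi(\monster,b)\subseteq P.$$
The direction $(\Rightarrow)$ is immediate from closure of $P$: if $\U(\phi(\monster,b))=0$ then $\phi(\monster,b)\subseteq\mat(b)\subseteq\mat(P)=P$. The converse is where the rank-one hypothesis and the Extension Property of lovely pairs enter. Suppose $\U(\phi(\monster,b))=1$ and pick $a\in\phi(\monster,b)$ with $a\notin\mat(b)$; by the Extension Property there is $a'\in\monster$ with $a'\elem^1_b a$ and $a'\ind_b P$. Now $a'\notin\mat(b)$ means $\U(a'/b)=1$, and combined with $a'\ind_b P$ this forces $\U(a'/bP)=1$, i.e.\ $a'\notin\mat(bP)=\mat(P)=P$ (here I use both $\U(\monster)=1$ and $b\in P$). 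Thus $a'$ is an element of $\phi(\monster,b)$ outside~$P$.

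Granted the claim, $A\cap P$ coincides with the $\Ltwo$-definable subset $\set{b\in P:(\forall x)(\phi(x,b)\to P(x))}$ of $P$, so by Lemma~\ref{lem:trace} applied with the trivially \Pindependent\ empty parameter tuple, there exists a $T$-definable set $Z\subseteq\monster$, without parameters, such that $A\cap P=Z\cap P$. To promote this equality from $P$ to $\monster$, note that by Remark~\ref{rem:density-substructure} $P$ is $\kappa$-saturated as an $\Lang$-structure, hence realizes every complete $\Lang$-$1$-type over~$\emptyset$. Given $b\in\monster$, pick $b'\in P$ with $b\elem^1 b'$; then $b\in A\iff b'\in A\iff b'\in Z\iff b\in Z$, using $\aut(\monster)$-invariance of $A$ and parameter-free $\Lang$-definability of $Z$. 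Therefore $A=Z$, which is $\Lang$-definable without parameters, and the conclusion follows from the remark.

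The principal subtlety is the converse in the local claim, which must be produced from Extension rather than Density; it uses both rank one (so that non-forking of a non-$\mat(b)$-element over $b$ propagates to non-forking over $bP$) and closure of $P$ (to collapse $\mat(bP)$ to $P$). Everything else amounts to bookkeeping with Lemma~\ref{lem:trace} and the $\Lang$-saturation of $P$ granted by Remark~\ref{rem:density-substructure}.
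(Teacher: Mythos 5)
Your proof is correct, and it takes a genuinely different route from the one in the paper. The paper proves definability of the set $Z = \set{b : \U(\phi(\monster,b)) = 1}$ by an explicit ultraproduct argument in two halves: given a family of lovely pairs $\pair{N_i, P_i}$ with $b_i \in P_i$, one forms the ultraproduct and deduces (using closure of $P$ on one side, and the Extension property plus ord-definability of $\mat(b)$ on the other) that rank cannot jump across the ultraproduct. Your approach instead works inside a single monster lovely pair and translates the rank condition into an $\Ltwo$-definable condition \emph{on} $P$ — namely, for $b \in P$, $\U(\phi(\monster,b)) = 0$ iff $\phi(\monster,b) \subseteq P$ — via a short Extension-and-forking-calculus argument; you then invoke Lemma~\ref{lem:trace} with the empty (trivially \Pindependent) parameter tuple to trace this down to a $T$-definable set~$Z$, and promote $A \cap P = Z \cap P$ to $A = Z$ using $\aut(\monster)$-invariance of rank and $\kappa$-saturation of $P$ as an $\Lang$-structure (Remark~\ref{rem:density-substructure}). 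Your version is shorter once the trace lemma is in hand and makes the role of Density versus Extension more transparent; the paper's ultraproduct argument is more self-contained and, as noted in its proof, yields as a byproduct that $\mat$ is a definable matroid. One small presentational point: your passage from ``$a' \ind_b P$ and $\U(a'/b)=1$'' to ``$\U(a'/bP)=1$'' deserves a sentence — it is cleanest as a contradiction: if $a' \in \mat(bP)$, then transitivity with $a' \ind_b P$ forces $a' \in \mat(b)$, contrary to choice.
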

\begin{proof}
Fix an $\Lang$-formula $\psi(x, y)$.
Denote $V_c \coloneqq \phi(\monster, c)$,
and define $Z \coloneqq \set{c \in \monster: \U(V_c) = 1}$.
We have to prove that $Z$ is definable.
This is equivalent to show that both $Z$ and its complement are preserved under ultraproducts.

\begin{enumerate}
\item
Let $(b_i : i \in I)$ be a sequence such that $b_i \in Z$ for every $i \in I$.
For every $i \in I$, choose $c_i \in \monster$ such that $c_i \in V_c \setminus \mat(\emptyset)$.
By Lemma~\ref{lem:lovely-existence}, for every $i \in I$ there exists a lovely pair $\pair{N_i, P_i}$ such that
$b_i \in P_i$ and $c_i \ind_{b_i} P_i$.
Therefore, $c_i \notin P_i$.
Let $\pair{N, P(N), b, c}$ be an ultraproduct of the $\pair{N_i, P_i, b_i, c_i}$ and let
$\pair{M, P(M)}$ be a $\kappa$-saturated elementary extension of $\pair{N,
P(N)}$. 
Thus, $c \notin P(M)$.
If, for contradiction, $\U(V_b) = 0$, then $c \in \mat(b)$.
However, $b \in P(M)$ and $P(M)$ is closed in~$M$, absurd.\\
Notice that for this half of the proof we only used the fact that the Density property is first-order.
Notice moreover that we proved that $\mat$ is a definable matroid in the sense of \cite{fornasiero-matroids}.
\item
Let $(b_i : i \in I)$ be a sequence such that $b_i \notin Z$ for every $i \in I$.
By Lemma~\ref{lem:lovely-existence}, for every $i \in I$ there exists a lovely pair $\pair{N_i, P(N_i)}$ such that
$b_i \in P(N_i)$.
Let $\pair{N, P(N), b}$ be the ultraproduct of the $\pair{N_i, P(N_i), b_i}$ with ultrafilter~$\mu$,
and let $\pair{M, P(M)}$ be a $\kappa$-saturated elementary extension of $\pair{N, P(N)}$.
By the first half of the proof, the set $\mat(b)$ is ord-definable.
Hence, ``$x \in V_b$ and $x \notin \mat(b)$'' is a consistent partial type (in~$x$, with parameter~$b$).
Thus, by the Extension property, there exists $d \in M$ such that $d \in V_b$, $d \notin \mat(b)$, and  $d \ind_b P(M)$.
Since $P(M)$ is closed in~$M$, we have $d \notin P(M)$.
Thus, there exists $c \in N$ such that $c \in V_b \setminus P(N)$.
Choose $(c_i: i \in I)$ such that $c  = (c_i: i \in I) / \mu$, and such that $c_i \in V_{b_i} \setminus P(N_i)$ for every $i \in I$.
However, since $\U(V_{b_i}) = 0$, we have $c_i \in \mat^{N_i}(b_i) \subseteq P(N_i)$, absurd.
\qedhere
\end{enumerate}
\end{proof}

See \cite{fornasiero-matroids} for more results on the case when $\U$ is definable, 
and \cite{boxall} for more on lovely pairs of geometric structures.


\section{NIP, stability, etc. in lovely pairs}
\label{sec:stability}

For this section, we assume that $\Mp = \pair{\monster, P}$ 
is a monster model of $\Ttwo$.

\subsection{Coheirs}


Let $\M \subseteq \N \subset \Mb$ be small subsets of~$\Mb$,
such that $\pair{\M, P(\M)} \preceq \pair{\N, P(\N)} \prec \Mb_P$. Assume also
that both $\pair{\M, P(\M)}$ and $\pair{\N, P(\N)}$ are sufficiently saturated
(in particular, they are $\kappa_0$-saturated).

\begin{remark}

$\M \ind_{P(\M)} P$, and similarly for~$\N$. 
\end{remark}
\begin{proof}
It is sufficient to prove that $\bar{m}\ind_{P(\M)}P$ for every finite tuple
$\bar{m}$ from~$\M$. By local character there is some $C\subseteq P$ with
$\card C < \kappa_0$ and such that $\bar{m}\ind_C P$. 
Let $C^\prime\models \tp^2(C/\bar{m})$ be such that $C^\prime\subseteq \M$. Then $C^\prime\subseteq P(\M)$ and $\bar{m}\ind_{C^\prime}P$. Therefore $\bar{m}\ind_{P(\M)}P$.
\end{proof}

\begin{lemma}\label{lem:62}
Let $a \in \Mb^h$ and $q$ be a small tuple in~$P$.
Assume that $a \ind_{\M P} \N$ and $a \M \ind_{P(\M) q} P$.
Then, $a \N \ind_{P(\N) q} P$.
\end{lemma}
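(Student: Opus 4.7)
The strategy is to establish two intermediate independences,
(A) $\N \ind_{P(\N) q} P$ and (B) $a \ind_{\N q} P$,
and then recover the goal via a final Transitivity argument.
Indeed, given (A) and (B), by Normality (adding the base to the left) and Symmetry the goal $a \N \ind_{P(\N) q} P$ is equivalent to $P \ind_{P(\N) q} a \N q$; Transitivity along the chain $P(\N) q \subseteq \N q \subseteq a \N q$ splits this into $P \ind_{P(\N) q} \N q$ and $P \ind_{\N q} a \N q$, which reduce (after dropping $q$ and $\N q$ respectively from the right, both in the base) to (A) and (B).

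For (A), one starts from the fact $\N \ind_{P(\N)} P$ noted in the remark immediately before the lemma, and applies Base Monotonicity along $P(\N) \subseteq P(\N) q \subseteq P$ (legitimate since $q \subset P$).

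For (B), proceed in two sub-steps. First, derive the analogue $a \ind_{\M q} P$ from the hypothesis $a \M \ind_{P(\M) q} P$. A direct decomposition on the left via Transitivity fails because $q$ may lie outside $\M$, so the chain $P(\M) q \subseteq \M q \subseteq a \M$ breaks at the second inclusion. The fix is first to move $q$ to the right: by Symmetry and Normality, the hypothesis is equivalent to $P \ind_{P(\M) q} a \M q$, and now Transitivity along the valid chain $P(\M) q \subseteq \M q \subseteq a \M q$ decomposes it into $P \ind_{P(\M) q} \M q$ and $P \ind_{\M q} a \M q$. The first follows from the remark $\M \ind_{P(\M)} P$ together with Base Monotonicity, so the second holds, which is precisely $a \ind_{\M q} P$. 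Second, promote this from $\M$ to $\N$: Normality gives $a \ind_{\M q} \M P$, the hypothesis $a \ind_{\M P} \N$ gives $a \ind_{\M P} \N P$, and Transitivity along $\M q \subseteq \M P \subseteq \N P$ yields $a \ind_{\M q} \N P$. Base Monotonicity along $\M q \subseteq \N q \subseteq \N P$ then produces $a \ind_{\N q} \N P$, and Monotonicity on the right drops $\N$ to give $a \ind_{\N q} P$.

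The main obstacle is the tuple $q \subset P$, which need not lie in $\M$ or $\N$; this blocks the natural Transitivity decompositions, as the intermediate set would have to contain $q$ while lying inside a right-hand side that does not. Pushing $q$ to the right via Normality, exploiting that $q$ is already in the base, is the recurring trick, while the auxiliary independences $\M \ind_{P(\M)} P$ and $\N \ind_{P(\N)} P$ from the preceding remark serve as bridges between the $\M$-version and the $\N$-version of the desired statements.
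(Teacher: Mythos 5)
Your proof is correct and follows essentially the same route as the paper's: derive $a \ind_{\N q} P$ from the two hypotheses (the paper's displayed lines (6.1)--(6.3) do exactly your sub-steps for (B)), derive $\N q \ind_{P(\N)q} P$ from $\N \ind_{P(\N)} P$ (the paper's line (6.4), your (A)), and combine by transitivity along $P(\N)q \subseteq \N q \subseteq a\N q$ (line (6.5)). The only cosmetic difference is that you name the two intermediate targets up front, while the paper presents one compressed chain of implications; also, a couple of your appeals to ``Normality'' to enlarge the right-hand side (e.g.\ passing from $a\ind_{\M q}P$ to $a\ind_{\M q}\M P$) are really Normality composed with Symmetry, but the conclusions are right.
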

\begin{proof}
\begin{align}
a \M \ind_{P(\M) q} P \Ra 
a \ind_{\M q} P  \Ra
& a \ind_{\M q} \M P.\\
a \ind_{\M q} \M P \ett a \ind_{\M P} \N P  \Ra 
& a \ind_{\M q} \N P.\\
a \M \ind_{\M q} \N P \Ra 
a \M \ind_{\N q} \N P \Ra
& a \N \ind_{\N q} P.\\
\pair{\N, P(\N)} \prec \pair{\Mb, P} \Ra
\N \ind_{P(\N)} P \Ra
& \N q \ind_{P(\N) q} P.\\
a \N \ind_{\N q} P \ett \N q \ind_{P(\N) q} P \Ra
& a \N \ind_{P(\N) q} P.
\end{align}
\end{proof}

Remember that $\Td$ is the theory of lovely pairs.
\begin{proviso*}
For the remainder of this section, we assume that \textbf{``being lovely'' is
  a first order property} and that $\Mp = \pair{\monster, P}$ is a monster
model of~$\Td$.
\end{proviso*}

\subsection{NIP}

\begin{theorem}
If $T$ has NIP, then $\Td$ also has NIP.
\end{theorem}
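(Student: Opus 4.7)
The plan is to reduce the problem to the NIP of $T$ itself, using near model completeness as the main structural tool and a Ramsey-type extraction to transfer the alternation pattern from $\Ltwo$ to $\Lang$.

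The first move is a reduction. Since the class of NIP formulae is closed under Boolean combinations and under arbitrary partitions of the free variables into ``object'' and ``parameter'' blocks, Proposition~\ref{prop:model-complete} reduces the task to showing that every formula of the shape
\[
\psi(\bar x;\bar y) \;:=\; \exists\bar z\bigl(P(\bar z)\wedge\varphi(\bar x,\bar y,\bar z)\bigr),
\]
with $\varphi\in\Lang$, has NIP in $\Mp$ under an arbitrary choice of partition $(\bar x;\bar y)$ of the variables.

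Assume toward a contradiction that such a $\psi$ has IP. Then there exist a tuple $\bar a\in\Mp$ and an $\Ltwo$-indiscernible sequence $(\bar b_i)_{i<\omega}$ such that, after passing to a subsequence, $\Mp\models\psi(\bar a,\bar b_i)$ iff $i$ is even. For each even $i$ pick a witness $\bar c_i\in P^{|\bar z|}$ with $\varphi(\bar a,\bar b_i,\bar c_i)$; for each odd $i$ pick any $\bar c_i\in\monster^{|\bar z|}$ with $\neg\varphi(\bar a,\bar b_i,\bar c_i)$. Such a $\bar c_i$ in the odd case exists: otherwise $\varphi(\bar a,\bar b_i,\bar z)$ would hold for \emph{every} $\bar z$, and in particular for some $\bar z\in P$, forcing $\psi(\bar a,\bar b_i)$ to hold. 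Note also, by the back-and-forth argument of Proposition~\ref{prop:back-and-forth}, that we may first arrange (by enlarging the $\bar b_i$ with a $P$-basis while keeping $\Ltwo$-indiscernibility by compactness) that each $\bar b_i$ is \Pindependent{}, so $(\bar b_i)$ is in particular $\Lang$-indiscernible.

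The final step is to convert the sequence $(\bar b_i\bar c_i)_{i<\omega}$, which witnesses alternation of the $\Lang$-formula $\varphi(\bar a,\cdot,\cdot)$, into an $\Lang$-indiscernible sequence witnessing IP of $\varphi$ in $T$; this will contradict NIP of $T$. The tactic is to apply Ramsey-style extraction separately to the even and odd sub-sequences, obtaining infinite $\Lang$-indiscernible refinements on which $\varphi(\bar a,\cdot,\cdot)$ has constant truth value (true on the even refinement, false on the odd one), and then, using the $\Ltwo$-indiscernibility of $(\bar b_i)$ together with compactness, splice these into a single $\Lang$-indiscernible sequence on which $\varphi(\bar a,\cdot,\cdot)$ still alternates.

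The principal obstacle is the splicing in this final step: a direct Ramsey extraction from the mixed sequence $(\bar b_i\bar c_i)$ would typically retain only one parity and lose the alternation, so the extra ingredient is the symmetry enforced by $\Ltwo$-indiscernibility of $(\bar b_i)$, which ensures that the ``availability of $P$-witnesses'' is coherently reflected by the $\Ltwo$-type and can be tracked through the extraction to keep both patterns simultaneously. This is precisely the point where the arguments of \cite{BPV} (for the simple case) and \cite{boxall} (for the geometric case) specialise, and the present general framework supplies enough structure via Proposition~\ref{prop:back-and-forth} and near model completeness for the same kind of extraction to go through.
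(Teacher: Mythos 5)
Your reduction via Proposition~\ref{prop:model-complete} is fine, but the plan then diverges from the paper's, which gives two proofs, neither of which is an alternation argument: a short one that applies near model completeness plus a black-box NIP transfer theorem of Chernikov--Simon, and a long, self-contained one that counts coheirs. The long proof shows that for $\pair{M,P(M)} \preceq \pair{N,P(N)}$ there are at most $2^{|M|}$ types $\tp^2(a/N)$ finitely satisfiable in~$M$: Local Character yields an intermediate $M'$ of size $|M|$ with $a\ind_{M'P}NP$; one augments $a$ by a $P$-tuple $\bar f'$ making $aM'\bar f'$ \Pindependent; Lemma~\ref{lem:62} gives that $aN\bar f'$ is \Pindependent; and Proposition~\ref{prop:back-and-forth} then reduces $\tp^2(a\bar f'/N)$ to $\tp^1(a\bar f'/N)$ together with the \Ptype, so that NIP of~$T$ bounds the number of choices.

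The concrete gap in your plan is the splice. After Ramsey-extracting $\Lang$-indiscernible refinements from the even and odd subsequences of $(\bar b_i \bar c_i)$ you have two $\Lang$-indiscernible sequences, but interleaving two $\Lang$-indiscernible sequences, even ones with identical EM-type over~$\emptyset$, is in general not $\Lang$-indiscernible (in a dense linear order take two increasing sequences, one lying entirely to the right of the other: the interleaving already fails at 2-types). The appeal to ``the symmetry enforced by $\Ltwo$-indiscernibility of $(\bar b_i)$'' does not supply a mechanism here: $\Ltwo$-indiscernibility of $(\bar b_i)$ over~$\emptyset$ gives no control over the witnesses~$\bar c_i$, which are chosen ad hoc and depend on~$\bar a$. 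There is also a structural warning sign: as written, your argument would derive NIP of the pair from NIP of~$T$ plus near model completeness alone, whereas the Chernikov--Simon transfer theorem that the paper invokes for exactly this step also requires that the structure induced on~$P$ be NIP. In the lovely-pair setting this extra ingredient is supplied by the Density property via Lemma~\ref{lem:trace}, and your plan leaves no place for it. The coheir-counting route avoids having to realize an explicit alternation pattern on an indiscernible sequence and is what you should aim at.
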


\begin{proof}[Short proof]
Apply Proposition~\ref{prop:model-complete} and \cite[Corollary~2.7]{CS10}.
\end{proof}
\begin{proof}[Long proof]
Let $a\in \monster$. Suppose $\tp^2(a/\N)$ is finitely satisfied in $\M$. 
We show that there are no more than $2^{|\M|}$ choices for $\tp^2(a/\N)$.  

By local character there is some $C\subset \N P$ such that 
$\card C \leq \kappa_0$ and $a\ind_C \N P$. 
Therefore there is some $\M^\prime$ such that $\pair{\M,P(\M)}\prec
\pair{\M^\prime,P(\M^\prime)}\prec \pair{\N,P(N)}$, $|\M^\prime|=|\M|$ and
$a\ind_{\M^\prime P}\N P$. 
This $\M^\prime$ depends on $a$. However we are assuming
$\tp^2(a/\N)$ is finitely realisable in $\M$ which implies that $\tp^2(a/\N)$ is
invariant over $\M$. Therefore any $\M^{\prime\prime}\models
\tp^2(\M^\prime/\M)$ such that $\M^{\prime\prime}\subseteq \N$ would work in
place of $\M^\prime$. There are no more than $2^{|\M|}$ possibilities for
$\tp^2(\M^\prime/\M)$. For each of these, fix some particular realisation.
Then, when we choose $\M^\prime$, we are actually selecting it from a list of
at most $2^{|\M|}$ things.

First we select the correct $\M^\prime$ from our list and assert that
$a\ind_{\M^\prime P}\N P$. We now choose
$\tp^2(a\bar{f}/\M^\prime)$ extending $\tp^2(a/\M^\prime)$ such that
$a\M^\prime\bar{f}$ is \Pindependent. 
We know we can do this such that 
$\card{\bar{f}}\leq \card{\M^\prime}= \card{\M}$. 
This was also a choice of one thing from a list of no more than
$2^{\card{\M}}$ things. 
Let $\bar{f}^\prime$ be such that $\bar f \in P$,
$a\bar{f}^\prime\models \tp^2(a\bar{f}/\M^\prime)$,
 and $\tp^1(a\bar{f}^\prime/\N)$ is finitely realisable in $\M^\prime$. 
We specify $\tp^1(a\bar{f}^\prime/\N)$ and we know that this too is a choice
from a list of no more than $2^{\card{\M}}$ things (since the original theory
$T$ has NIP). 
We now have enough to completely determine $\tp^2(a\bar{f}^\prime/\N)$. This is
because the choice of $\tp^2(a\bar{f}^\prime/\M^\prime)$ determines
$\Ptp(a\bar{f}^\prime)$ and, by Lemma~\ref{lem:62}, it also gives that
$a\N\bar{f}^\prime$ is \Pindependent and then we only need
$\tp^1(a\bar{f}^\prime/\N)$ to determine $\tp^2(a\bar{f}^\prime/\N)$. Overall,
we made our choice from a list of $2^{|\M|}\times 2^{|\M|}\times
2^{|\M|}=2^{|\M|}$ things.
\end{proof}

\subsection{Stability, super-stability,  \texorpdfstring{$\omega$}{omega}-stability}

\begin{theorem}
If $T$ is stable (resp. superstable, resp.\ totally transcendental), then
$\Td$ also is. 
\end{theorem}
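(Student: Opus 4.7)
The plan is to mimic the template of the NIP proof above, replacing the finite-satisfiability trick with a uniform type-counting argument. In each of the three cases I want to show that $\Td$ is $\lambda$-stable, for $\lambda$ chosen according to the hypothesis on $T$: any $\lambda$ with $\lambda^{\card T} = \lambda$ in the stable case; any $\lambda \geq 2^{\card T}$ in the superstable case; any $\lambda \geq \card T + \aleph_0$ in the totally transcendental case. Equivalently, it is enough to bound $\card{S^2_n(N)}$ when $\pair{N, P(N)} \prec \Mp$ has cardinality $\leq \lambda$.

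The key structural input is Proposition~\ref{prop:back-and-forth} together with its imaginary refinement Lemma~\ref{lem:imaginary-type}: for a \Pindependent tuple $\av\fv$ (with $\fv \subset P$), the $\Ltwo$-type of $\av\fv$ over $N$ is completely determined by the pair $\Pa{\tp^1(\av\fv/N),\ \Ptp(\av\fv)}$. Hence any $p \in S^2_n(N)$ will be captured by four pieces of data: a small sub-lovely pair $\pair{M', P(M')} \prec \pair{N, P(N)}$, an auxiliary tuple $\fv \subset P$, the $\Lang$-type $\tp^1(\av\fv/N)$, and the P-type $\Ptp(\av\fv)$.

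To produce such data I would argue as follows. Given a realization $\av$ of $p$, use local character to find a small $C \subseteq N P$ with $\av \ind_C N P$; thicken $C$ to a sub-lovely pair $\pair{M', P(M')}$ of bounded size, using Lemma~\ref{lem:lovely-existence} applied inside $\pair{N, P(N)}$; then use the extension property to choose $\fv \subset P$ making $\av M' \fv$ \Pindependent, and apply Lemma~\ref{lem:62} to promote this to \Pindependence of $\av N \fv$. In the stable case $\card{C} + \length\fv \leq \card T$; in the superstable case $\indf$ is superior, so $\kappa_0 = \omega$ and both $C$ and $\fv$ stay within $\card T$; in the totally transcendental case a countable $M'$ together with a countable $\fv$ suffice (here we use that $\omega$-stable implies $\indf$ superior, and that a countable elementary sub-lovely pair can always be found inside a saturated $\pair{N,P(N)}$). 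Counting then yields at most $\lambda^{\card{M'}} = \lambda$ choices for $M'$, at most $\card{S^1_{n+\length\fv}(N)} \leq \lambda$ choices for the $\Lang$-type (by $\lambda$-stability of $T$), and at most $2^{\length\fv} \leq \lambda$ choices for the P-type, for a grand total of $\card{S^2_n(N)} \leq \lambda$.

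The main obstacle I expect is the totally transcendental case: we must pin down the P-type using only countably many bits, which forces us to take $\fv$ countable. The tool for this is precisely the superiority of $\indf$ under total transcendence, combined with Lemma~\ref{lem:density-saturated} (the formula-level reformulation of density), which allows one to approximate $\av$ by countably many coherent coheirs in $P$ and transfer the $\aleph_0$ bound on $\card{S^1(A)}$ for countable $A$ up to $\card{S^2(A)}$. The remaining cases are then cosmetic variations on the same scheme, using only that $\kappa_0 \leq \card T^+$.
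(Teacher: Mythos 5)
Your overall strategy---reduce $\tp^2(\av/N)$ to the pair $\Pa{\tp^1(\av\fv/N), \Ptp(\av\fv)}$ via Proposition~\ref{prop:back-and-forth}, having first arranged $\av N\fv$ to be \Pindependent{}---is exactly the paper's. The difference is the route you take to \Pindependence of $\av N\fv$, and that route both overcomplicates the argument and introduces a genuine gap in the counting.

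The detour: you build a sub-lovely pair $\pair{M', P(M')}$ and invoke Lemma~\ref{lem:62} to promote \Pindependence of $\av M'\fv$ to \Pindependence of $\av N\fv$. This scheme is borrowed from the NIP proof, where the auxiliary model $M'$ really is forced on you because you must encode $\tp^2(M'/M)$ as one of the data you count. In the stability argument no auxiliary model is needed: since $\pair{N,P(N)}\prec\Mp$, the model $N$ is already \Pindependent, i.e.\ $N\ind_{P(N)}P$ (the unnumbered remark at the start of \S\ref{sec:stability}, or Remark~\ref{rem:heir}). Local character for the finite tuple $\av$ over the small base $N$ gives $\fv\subset P$ with $\card{\fv}<\kappa_0$ and $\av\ind_{N\fv}P$, and transitivity yields $\av N\fv\ind_{P(N)\fv}P$ at once---no Lemma~\ref{lem:62} and no $M'$.

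The cost of the detour is concrete. A $\kappa$-lovely pair, which is what Lemma~\ref{lem:lovely-existence} produces, has cardinality $\geq\kappa>\max(\kappa_0,\card T)$, and $\kappa$ is a free parameter that can be chosen arbitrarily large. So $\lambda^{\card{M'}}=\lambda$ is \emph{not} implied by $\lambda^{\card T}=\lambda$, and in the superstable and totally transcendental cases---where you need $\lambda$ as small as $\card T+\aleph_0$---there is no lovely elementary substructure of size $\leq\lambda$ at all. The $M'$-count therefore fails precisely where the theorem has content.

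Finally, your handling of the totally transcendental case is off. You take $\fv$ countable and invoke Lemma~\ref{lem:density-saturated} and ``coherent coheirs''; but the only input needed is that $\ind$ is superior when $T$ is superstable (a fortiori when totally transcendental), i.e.\ $\kappa_0=\omega$, so local character gives a \emph{finite} $\fv$. Then $\Ptp(\av\fv)$ contributes only $\aleph_0$ possibilities and $\card{S^1_{<\omega}(N)}\leq\lambda$ by $\lambda$-stability, which is what you need. If $\fv$ were merely countable this would break: for $T$ countable and totally transcendental we must achieve $\lambda=\aleph_0$ for $\Td$, but a countable $\fv$ injects $2^{\aleph_0}$ possible P-types. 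In the plain stable case, choose $\lambda$ with $\lambda^{\kappa_0+\card T}=\lambda$; then $\card{\fv}<\kappa_0$ is absorbed.
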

\begin{proof}
Assume that $T$ is stable.
Remember that a theory $T$ is stable iff it is $\lambda$-stable for some
cardinal $\lambda$, iff it is $\lambda$-stable for every 
$\lambda^{\card T} = \lambda$.
Choose $\lambda$ a small cardinal such that 
$\lambda^{\kappa_0 + \card T} = \lambda$.
Let $\pair{M, P(M)} \prec \Mp$ be a model of~$\Td$ of cardinality~$\lambda$.
Notice that $\card{S^1_{\kappa_0}(M)} = \lambda$.
We must prove that $\card{S^2_1(M)} \leq \lambda$.
Let $q \in S^2_1(M)$ and $c \in \monster$ satisfying~$q$.
Let $\pv \subset P$ such that $c \ind_{M \pv} P$ and 
$\card{\pv} < \kappa_0$. 
Since moreover $M \ind_{P(M)} P$, we have $c \ind_{P(M) \pv} P$, and therefore
$c M \pv$ is \Pindependent.
Thus, $\tp^2(c M \pv)$ is determined only by $\tp^1(c M \pv)$ plus the
\Ptype of~$c$.
Therefore, $\tp^2(c /M)$ is determined by $\tp^1(c /M)$ plus the \Ptype
of~$c$.
Since $\card{\pv} < \kappa_0$, we have
$\card{S^2_1(M)} \leq \card{S^1_{\kappa_0}(M)} = \lambda$, and we are done.

Assume now that $T$ is super-stable.
Remember that a theory $T$ is super-stable iff there exists a cardinal $\mu$
such that $T$ it is $\lambda$-stable every
cardinal $\lambda > \mu$, and that $T$ is totally transcendental iff we can
take  $\mu = \card T$.
Moreover, since $T$ is super-stable, $\ind$ is superior, and therefore
$\kappa_0 = \omega$.
Let $\lambda \geq \mu$ and
$\pair{M, P(M)} \prec \Mp$ be a model of~$\Td$ of cardinality~$\lambda$.
Let $\pv \subset P$ such that $c  \ind_{M \pv} P$ and $\pv$ is finite.
As before, $c M \pv$ is \Pindependent, and thus
$\tp^2(c /M)$ is determined by $\tp^1(c /M)$ plus the \Ptype of~$c$.
Since $\pv$ is finite, we have
$\card{S^2_1(M)} \leq \card{S^1_{< \omega}(M)} = \lambda$.
Hence, $\Td$ is super-stable, and, if $T$ is totally transcendental, then
$\Td$ is also totally transcendental.
\end{proof}


\begin{example}
In general, if $T$ is geometric, $\Td$ is not geometric.
For instance, if $\monster$ is an o-minimal structure expanding a field and
$\ind = \ind[$\acl$]$, then $\Td$ is not geometric~\cite{fornasiero-matroids}.
\end{example}

\subsection{Simplicity and supersimplicity}
\label{subsec:simple}

By ``divides'' we will always mean ``divide in the sense of Shelah's'' (but we
might have to specify in which structure).

\begin{fact}
\label{fact:simple-definition}
The following are equivalent:
\begin{enumerate}
\item 
$\tp(\cv / A \bv)$ does not divide over $A$;
\item 
for any indiscernible sequence $I = \pair{\bv_i: i \in \Nat}$
with $\bv_0 = \bv$, let $p_i(x)$ be the copy of $\tp(\cv/A \bv)$ over 
$A \bv_i$; then, there is a tuple $\cv'$ realizing $\bigcup_i p_i(x)$;
\item
for any indiscernible sequence $I = \pair{\bv_i: i \in \Nat}$, with $\bv_0 =
\bv$, there is a tuple $\cv'$ realizing $\tp(\cv / A \bv)$, such that $I$ is
indiscernible over  $A \cv'$.
\end{enumerate}
\end{fact}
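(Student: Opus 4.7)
The plan is to establish $(3) \Rightarrow (2)$, the equivalence $(1) \Leftrightarrow (2)$, and then $(2) \Rightarrow (3)$. The first is immediate: if $I = (\bv_i)_{i \in \Nat}$ is $A\cv'$-indiscernible, then $\tp(\cv'/A\bv_i)$ is independent of~$i$, and equalling $p := \tp(\cv/A\bv)$ at $i=0$, it equals the natural copy $p_i$ at every~$i$, so $\cv'$ realizes $\bigcup_i p_i$.

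For $(1) \Leftrightarrow (2)$, the proof essentially unwinds the definition of dividing using compactness. If $p$ divides then some $\phi(x, \bv) \in p$ divides, witnessed by an $A$-indiscernible $(\bv_i)$ with $\tp(\bv_0/A) = \tp(\bv/A)$ and $\set{\phi(x,\bv_i): i \in \Nat}$ $k$-inconsistent for some~$k$; after an $A$-automorphism we may take $\bv_0 = \bv$, whereupon $\bigcup_i p_i \supseteq \set{\phi(x,\bv_i): i \in \Nat}$ is inconsistent. Conversely, if $\bigcup_i p_i$ is inconsistent for some such $(\bv_i)$, then by compactness some finite subset is inconsistent; taking the conjunction of the finitely many formulae from $p$ involved, we get $\phi(x,\bv) \in p$ with $\set{\phi(x,\bv_i): i < n}$ inconsistent for some~$n$, and by $A$-indiscernibility of $(\bv_i)$, $\set{\phi(x,\bv_i): i \in \Nat}$ is $n$-inconsistent, so $\phi$ divides and hence so does~$p$.

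For $(2) \Rightarrow (3)$, fix $I = (\bv_i)$ $A$-indiscernible with $\bv_0 = \bv$ and choose $\cv'$ realizing $\bigcup_i p_i$ by~(2). The standard Ramsey plus compactness extraction produces a sequence $J = (\bv'_j)_{j \in \Nat}$ which is $A\cv'$-indiscernible and whose finite-subsequence types over $A\cv'$ are realized by finite subsequences of~$I$. In particular $J$ has the same $A$-EM-type as~$I$, so (both being $A$-indiscernible) there is $\sigma \in \aut(\monster/A)$ sending $\bv'_j$ to $\bv_j$ for all~$j$; putting $\cv'' := \sigma(\cv')$, the $A\cv'$-indiscernibility of $J$ transports to $A\cv''$-indiscernibility of~$I$. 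The step requiring care is verifying $\tp(\cv''/A\bv) = p$: for some index~$i$ we have $\tp(\bv'_0/A\cv') = \tp(\bv_i/A\cv')$ and $\tp(\cv'/A\bv_i) = p_i$ is the natural $A$-conjugate of~$p$, so $\tp(\cv'/A\bv'_0)$ is also the natural $A$-conjugate of~$p$; conjugating by~$\sigma$, which fixes $A$ pointwise and sends $\bv'_0$ to~$\bv$, then yields $\tp(\cv''/A\bv) = p$. This final bookkeeping of the copies of $p$ along $\sigma$ is the only delicate point in the argument.
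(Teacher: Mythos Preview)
Your argument is correct and standard. The paper itself does not prove this fact: it simply cites \cite[Remark~3.2(2) and Lemma~3.1]{casanovas07}. Your write-up is essentially a self-contained account of what those references establish, so there is nothing substantive to compare.
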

\begin{proof}
\cite[Remark~3.2(2) and Lemma~3.1]{casanovas07}.
\end{proof}

The following fact is well known: for a reference, see
\cite[Exercise~29.1]{TZ}.

\begin{fact}\label{fact:simple}
Let $T$ be a simple theory.
Let $\pair{M_i: i < \omega}$ be an indiscernible sequence over~$A$.
Assume that $C \indf_{A} M_0$.
Let $p_0(y) \coloneqq \tp(C/M_0)$ and $p_i(y)$ be the copy of $p_0$
over~$M_i$.
Then, there exists~$C'$, such that:
\begin{enumerate}
\item $C' \models \bigcup_i p_i(y)$;
\item $C' \indf_A \bigcup_i M_i$;
\item $\pair{M_i: i < \omega}$ is an indiscernible sequence over~$A C'$.
\end{enumerate}
\end{fact}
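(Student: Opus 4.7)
Plan: This fact is Kim's theorem on the propagation of non-forking along indiscernible sequences, specialized to simple theories. My plan is to combine Fact~\ref{fact:simple-definition}(3) (which handles indiscernibility, yielding (1) and (3)) with the Extension axiom of $\indf$ (which handles independence, yielding (2)), bridged by an Erd\H{o}s--Rado extraction on a long indiscernible sequence.

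Step 1 (conditions (1) and (3)). Since $T$ is simple, $C \indf_A M_0$ is equivalent to $\tp(C/AM_0)$ not dividing over $A$. Applying Fact~\ref{fact:simple-definition}(3) with $\bv = M_0$ to the sequence $(M_i)$ yields $C_0 \equiv_{AM_0} C$ such that $(M_i)$ is indiscernible over $AC_0$; this gives (3), and (1) for $i=0$. Condition (1) for $i>0$ follows: the $AC_0$-indiscernibility provides an automorphism in $\aut(\monster/AC_0)$ realizing the coordinate-wise map $M_0 \to M_i$, which is the same $A$-elementary map defining the copy $p_i$ from $p_0$, so $\tp(C_0/AM_i) = p_i$.

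Step 2 (upgrading to (2)). The $C_0$ from Step 1 need not satisfy (2). To remedy this, extend $(M_i)_{i<\omega}$ by compactness to a long $A$-indiscernible sequence $(M_i)_{i<\lambda}$; by Extension, find $\tilde C \equiv_{AM_0} C$ with $\tilde C \indf_A \bigcup_{i<\lambda} M_i$; then re-apply Fact~\ref{fact:simple-definition}(3) to the long sequence to obtain $\hat C \equiv_{AM_0} \tilde C$ for which $(M_i)_{i<\lambda}$ is $A\hat C$-indiscernible. An Erd\H{o}s--Rado extraction --- legitimate because simplicity bounds $\kappa_0$ by $|T|^+$ --- yields an $\omega$-subsequence $(M_{j_k})_{k<\omega}$ for which $\hat C \indf_A \bigcup_k M_{j_k}$.

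Step 3 (transfer back to the original sequence). Both $(M_{j_k})_{k<\omega}$ and $(M_k)_{k<\omega}$ are $A$-indiscernible sub-sequences of $(M_i)_{i<\lambda}$, so they have the same $A$-EM-type. By homogeneity of $\monster$, pick $\tau \in \aut(\monster/A)$ realizing the coordinate-wise $A$-elementary map $M_{j_k} \mapsto M_k$, and set $C' := \tau(\hat C)$. Conditions (2) and (3) then transport directly. For $\tp(C'/AM_0)$: the $A\hat C$-indiscernibility forces $\tp(\hat C / AM_{j_0})$ to be the coordinate-wise copy of $p_0 = \tp(\hat C/AM_0)$, and $\tau$ was chosen to respect the coordinate-wise correspondence, so $\tp(C'/AM_0) = p_0$, whence $C' \equiv_{AM_0} C$; condition (1) for $i>0$ then follows as in Step 1. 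The main obstacle is precisely this combination of (2) and (3) --- neither Fact~\ref{fact:simple-definition}(3) alone (no independence) nor Extension alone (no indiscernibility) suffices, and the detour through a long indiscernible sequence with Erd\H{o}s--Rado extraction reconciles them via the simplicity-specific bound on $\kappa_0$.
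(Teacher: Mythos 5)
The paper itself offers no proof here; it simply cites the statement to Tent--Ziegler, so there is no written-out argument to compare against, and I assess your attempt on its own.

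Your Steps~1 and~3 are sound, and the overall strategy (pass to a long $A$-indiscernible sequence, build in independence via the Extension axiom, then transport back by an $A$-automorphism) is the right one. The gap is in Step~2, in the claim that after passing from $\tilde C$ to $\hat C$ via Fact~\ref{fact:simple-definition}(3), an Erd\H{o}s--Rado extraction produces a subsequence with $\hat C \indf_A \bigcup_k M_{j_k}$. This does not follow. Fact~\ref{fact:simple-definition}(3) only gives $\hat C \equiv_{AM_0} \tilde C$, not $\hat C \equiv_{A\bigcup_i M_i} \tilde C$, so the independence $\tilde C \indf_A \bigcup_{i<\lambda} M_i$ is \emph{lost} in the passage to $\hat C$. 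Erd\H{o}s--Rado is then the wrong tool: it extracts indiscernible subsequences, and $(M_i)_{i<\lambda}$ is already $A\hat C$-indiscernible, so there is nothing to extract and in any case extraction cannot create independence over~$A$. Your aside about $\kappa_0\leq|T|^+$ suggests you actually have local character in mind, but local character only gives $\hat C \indf_{AB_0} \bigcup_i M_i$ for some $B_0 \subset \bigcup_i M_i$ with $\card{B_0}<\kappa_0+\card{\hat C}^+$ --- that is, independence over the \emph{enlarged} base $AB_0$, not over~$A$. Restricting to a subsequence disjoint from $B_0$ does not repair this: to pass to base $A$ by transitivity one would need $\hat C \indf_A B_0$, which is the very kind of statement being proved.

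The structurally correct variant of your plan keeps the independence-bearing $\tilde C$ and applies Erd\H{o}s--Rado directly to get an $A\tilde C$-indiscernible subsequence $(M_{j_k})_{k<\omega}$; monotonicity then preserves $\tilde C \indf_A \bigcup_k M_{j_k}$, and your Step~3 transport yields conditions (2) and (3). But then condition (1) becomes the obstruction: one needs $\tp(\tilde C/A M_{j_0}) = p_{j_0}$, and this only holds if $\tilde C$ was chosen from the start to realize $\bigcup_{i<\lambda} p_i$, which in turn presupposes that $\bigcup_{i<\lambda} p_i$ does not fork over~$A$. That non-forking of the union is precisely the substantive content of the Fact, and it does not come for free from the Extension axiom together with Fact~\ref{fact:simple-definition}. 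So the argument as written has a genuine hole: conditions (1)--(2)--(3) must be secured simultaneously, and the step that reconciles them is exactly the one your proof skips.
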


Remember that $A \indf_B C$ implies $A \ind_B C$.
\begin{remark}\label{rem:heir}
Assume that $\pair{M, P(M)} \preceq \pair{\monster, P}$ (but not necessarily
that $\pair{\monster, P} \models \Td$).
Then, for every $\cv \in P$, $\tp^2(\cv/M)$ is finitely satisfiable in~$P(M)$.
Therefore, $M \indf_{P(M)} P$ (in the sense of both $T$ and~$\Td$),
and $M \ind_{P(M)} P$.
\end{remark}


\begin{proposition}\label{prop:simple}
If $T$ is simple, then $\Td$ is also simple.
If $T$ is supersimple, then $\Td$ is also supersimple.
\end{proposition}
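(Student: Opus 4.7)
The plan is to verify that an appropriate candidate independence relation on the monster of $\Td$ satisfies the standard axioms together with the Independence Theorem over models; this yields simplicity via the Kim--Pillay characterization, and supersimplicity by inspecting the local-character cardinal. For small $A, B, C \subseteq \monster$, I would define $A \ind[P]_C B$ to mean $A \indf_{C \cup P(ACB)} B \cup P(ACB)$, where $P(X)$ stands for $\mat(X) \cap P$ and $\indf$ is Shelah forking in~$T$. The axioms of Invariance, Symmetry, Transitivity, Monotonicity, Normality, and Finite Character for $\ind[P]$ should follow immediately from the corresponding axioms of $\indf$ in~$T$, using that $P(X)$ is determined by the $\Ltwo$-type of~$X$.

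For Extension, I would combine $\Lang$-Extension for $\indf$ with the Extension property of the lovely pair and the back-and-forth of Proposition~\ref{prop:back-and-forth}: given $q \in S^2(C)$ and $B \supseteq C$, first produce a nonforking $T$-extension over $C \cup P(BC)$, then realize it by a conjugate whose \Ptp matches, using density and extension in the pair. The Independence Theorem over $\Ltwo$-models should reduce similarly to the Independence Theorem for $\indf$ in~$T$, since $\Ltwo$-types of \Pindependent tuples over $\Ltwo$-models are determined by the $\Lang$-type together with the \Ptp, and both amalgamate by the same back-and-forth.

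The crucial axiom, which also yields the supersimple conclusion, is Local Character. Given a finite tuple $\av$ and small $B \subseteq \monster$, I would apply local character of $\indf$ in~$T$ to $\av$ over $B \cup P$: this produces $D \subseteq B \cup P$ with $\card{D} \leq \kappa_0^f$ and $\av \indf_D B \cup P$. Splitting $D = B_0 \cup P_0$ with $B_0 \subseteq B$ and $P_0 \subseteq P$, I would work inside a sufficiently saturated intermediate $\Ltwo$-submodel containing $B_0$ and invoke Remark~\ref{rem:heir} (which gives $N \indf_{P(N)} P$) to conjugate $P_0$ into a subset of $P(B_0)$ while fixing $\av B_0$; this yields $\av \ind[P]_{B_0} B$ with $\card{B_0} \leq \kappa_0^f$. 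Hence the local-character cardinal of $\ind[P]$ is bounded by $\kappa_0^f$: we obtain $\leq \card{T}^+$ in the simple case and $\omega$ in the supersimple case.

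The main obstacle is this Local Character step: the conjugation of $P_0$ into $P(B_0)$ must simultaneously preserve $\av B_0$ and the nonforking relation with~$B$, and requires a careful use of the heir property from Remark~\ref{rem:heir} together with a controlled application of the back-and-forth argument of Proposition~\ref{prop:back-and-forth}.
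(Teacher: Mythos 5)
Your approach is genuinely different from the paper's, and the difference matters. The paper does not build an abstract independence relation on $\Mp$ and invoke the Kim--Pillay theorem; instead it verifies simplicity directly via the dividing criterion of~\cite[Lemma~6.1]{BPV}: for every small model $\pair{M,P(M)} \models \Td$ and finite $\av$, it produces a set $A \subset M$ of size $\leq \card T$ (finite in the supersimple case) such that $\tp^2(\av/M)$ does not divide over~$A$. The non-dividing is then checked by hand using Fact~\ref{fact:simple-definition} (the indiscernible-sequence characterization of dividing) and Fact~\ref{fact:simple}, which together let one transport an $\Ltwo$-indiscernible sequence over $A$ to a single $\Ltwo$-type via the back-and-forth of Proposition~\ref{prop:back-and-forth}. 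This route deliberately avoids the Independence Theorem altogether.

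That avoidance is exactly where your proposal has a real gap. You assert that the Independence Theorem over $\Ltwo$-models ``should reduce similarly to the Independence Theorem for $\indf$ in~$T$, since $\Ltwo$-types of \Pindependent tuples over $\Ltwo$-models are determined by the $\Lang$-type together with the \Ptp.'' This is the heart of the matter and it is not a routine reduction: when you amalgamate two $\Lang$-types over a model, the resulting realization need not be \Pindependent, and making it so (while preserving both constraints and independence from $a_1a_2$) is precisely the delicate part; it cannot be waved through with a single appeal to Proposition~\ref{prop:back-and-forth}. In~\cite{BPV} the amalgamation argument for lovely pairs is a substantial, multi-step construction, not a corollary of the back-and-forth. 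Since your route rests entirely on Kim--Pillay, omitting this verification leaves the proof incomplete. A secondary concern: your relation $A \ind[P]_C B$ uses the floating base $P(ACB) = \mat(ACB)\cap P$, which grows as $A$, $B$, or $C$ grow, so Transitivity, Normality and Extension do not ``follow immediately''; in particular Base Monotonicity and the left-transitivity chain all have to be re-derived with the base being simultaneously enlarged, and this is not a formality. (Note that the paper's own auxiliary relation $\indp$ in \S\ref{sec:lovely-independence} uses the fixed base $CP$, which sidesteps this issue but then makes $P$ part of every base, closer to localization than to forking in $\Td$.) Your Local Character sketch, which you yourself flag, is at least pointed in the right direction and resembles the first moves of the paper's proof; but without the Independence Theorem the overall strategy does not close.
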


\begin{proof}
The proof is almost identical to the one of~\cite[Proposition~6.2]{BPV}.
We will use the notation $A \indf_B C$ to mean that $A$ and $C$ do not fork
over~$C$, in the sense of Shelah's, according to the theory~$T$
(and \emph{not} to the theory~$\Td$), while, when saying ``$\tp^2(\av/B)$
divides  over~$C$'', we will imply ``according to~$\Td$''.

Let $\pair{M, P(M)}$ be a small model of~$\Td$ and $\av$ be a finite tuple.
We have to find $A \subseteq M$ such that $\card A \leq \card T$,
and $\tp^2(\av/M)$ does not divide over~$A$ (by \cite[Lemma~6.1]{BPV}, 
this will prove that 
$\Td$ is simple).
When $T$ is supersimple, we will see that $A$ could be chosen finite (and
hence $\Td$ is supersimple).

By Remark~\ref{rem:heir}, $M \indf_{P(M)} P$.
Hence, since $\indf$ satisfies local character, there exist
$C \subset P$ and $A \subset M$, both of cardinality at most~$\card T$,
such that
\begin{equation}\label{eq:simple-1}
\av \indf_{A C} M P.
\end{equation}
Moreover, since $C \subset P$,
$C \indf_{P(M)} M$, and hence, after maybe enlarging $A$,
we can also assume that
\begin{equation}\label{eq:simple-2}
C \indf_{P(A)} M.
\end{equation}
If moreover $T$ was supersimple, then $A$ and $C$ could be chosen finite.

Let $\pair{M_i: i < \omega}$ be an $\Ltwo$-indiscernible sequence over~$A$,
such that $M_0 = M$.
\eqref{eq:simple-2} implies that $C \indf_A M$;
therefore, we can apply Fact~\ref{fact:simple}.
Let $p_0(y) \coloneqq \tp^1(C/M_0)$ and $p_i(y)$ be the copy of $p_0$
over~$M_i$.
Then, there exists~$C'$ realizing the conclusions of Fact~\ref{fact:simple}.
Notice that $C' \indf_{P(A)} A$, thus, by transitivity,
$C' \indf_{P(A)} \bigcup_i M_i$, and hence
$C' \ind_{P(A)} \bigcup_i M_i$.
Since $\Mp$ is lovely, there exists $C''$ in $P$ such that
$C'' \elem^1_{\bigcup_i M_i} C'$; thus, \wloG $C' \subset P$.

Notice that $M \ind_{P(M)C} P$, thus $M C$ is \Pindependent,
and the same for $M C'$.
Moreover, $M C$ and $M C'$ satisfy the same $\Lang$-type and the same
\Ptype: therefore, they have the same $\Ltwo$-type.
Thus, by changing the sequence of $M_i$'s, we can assume that $C = C'$.
Let $r(\x) \coloneqq \tp^1(\av / M C)$,
and $r_i(\x)$ be the copy of $r(x)$ over $M_i$.
By \eqref{eq:simple-1}, $\av \indf_{A C} M C$, 
moreover, $\pair{M_i: i < \omega}$ is $\Lang$-indiscernible over~$A C$.
Thus, there exists $\av'$ realizing $\bigcup_i r_i(\x)$,
such that
\begin{equation}\label{eq:simple-3}
\av' \indf_{A C} \bigcup_i M_i C.
\end{equation}
By loveliness of $\Mp$ again, we can assume that
$\av' \ind_{\bigcup_i M_i C} P$ (note: here we have~$\ind$, not~$\indf$).
Thus, for each~$i$, by~\eqref{eq:simple-3},
\begin{equation}
\av' \ind_{M_i C} P.
\end{equation}
Since moreover $C \subset P$ and $M_i  \ind_{P(M_i)} P$,
we have $M_i C \ind_{P(M_i) C} P$, and therefore
$\av' M_i C \ind_{P(M_i) C} P$.
Thus, each $\av' M_i C$ is \Pindependent.
Moreover, they have all the same \Ptype and the same $\Lang$-type.
Thus, all the $\av' M_i C$ have the same $\Ltwo$-type. 
Moreover, by~\eqref{eq:simple-1}, $\av \ind_{M C} P$, and thus
$\av M C$ is \Pindependent, and therefore 
$\av M C \equiv^2 \av' M_i C$.
Therefore, by Fact~\ref{fact:simple-definition},
$\tp^2(\av/M)$ does not
divide over~$C$, and we are done.
\end{proof}

\begin{question}
Assume that $T$ is (super)rosy.
Is $\Td$ also (super)rosy?
\end{question}


\section{Independence relation in lovely pairs}
\label{sec:lovely-independence}

In this section, we will assume that ``being lovely is
first order'' and that $\Mp = \pair{\monster, P}$ is a monster model of $\Td$.

Let $\ind'$ the following relation on subsets of $\Mp$:
$A \ind'_C B$ iff $A \ind_{C P} B$;
we will write $\ind_P$ instead of $\ind'$.

\begin{definition}
$\ind$ satisfies (*) if:\\ 
For every $a \in \Mb^k$, $b \in \Mb^h$,
for every $C$ tuple in $\Mb$ (not necessarily of small length), 
if $a \notind_C b$, then there exists a finite subtuple $c$ of $C$ (of
length~$l$) and an $\Lang$-formula $\phi(x, y, z)$,
such that:
\begin{enumerate}
\item $\Mb \models \phi(a, b, c)$;
\item for every $a' \in \Mb^k$,  for every $c'$ subtuple of $C$ of length~$l$,
if $\Mb \models \phi(a', b, c')$, then $a' \notind_{C} b$.
\end{enumerate}
\end{definition}
Notice that (*) implies Strong Finite Character.

\begin{remark}
the independence relations in
examples~\ref{ex:independence-strict} and~\ref{ex:independence-matroid} 
satisfy (*).
\end{remark}
\begin{proof}
Let us show that when $T$ is simple, then $\indf$ satisfies (*)
Assume that $a \notind_C b$.
Let $p(y) \coloneqq \tp(b /C a)$ and $p_0 \coloneqq \tp (b /C)$.
Thus, $p$~divides over~$C$.
Hence, by \cite[Proposition~2.3.9 and Remark~2.3.5]{wagner}, there exist
a formula~$\psi$, a cardinal~$\lambda$, a finite subtuple $c \subset C$, 
and a formula $\theta(y, a, c) \in p(y)$, such that
such that $D_0(\theta(y, a, c) < D_0(p_0)$, 
where $D_0(\cdot) \coloneqq D(\cdot, \psi,\lambda)$.
Let $n \coloneqq D_0(p_0)$.
By \cite[Remark~2.3.5]{wagner}, the set 
$\set{a', c': D_0(\theta(y, a', c')) <   n}$ is ord-definable; 
hence, there exists a formula $\sigma(x, z)$ such that
$\monster \models \phi(a, c)$, and for every
$c'$ and~$a'$, if $\phi(a', c')$, then $D(\theta(y, a', c')) < n$.
Define $\phi(x, y, z) \coloneqq \theta(y, x, z) \et \sigma(y, z)$.
Then, $\phi$ satisfies the conclusion of~(*).

The other cases are similar: when $T$ is rosy, use the local \th-ranks
instead of the rank $D$ (\cite[\S3]{onshuus06}); when $\mat$ is an
existential matroid, to prove (*) for $\indmat$ use the associated global rank.
\end{proof}

\begin{proposition}
$\ind_P$ is an independence relation on $\Mb_P$; the constant $\kappa_0$ for
the local character axiom is the same for $\ind_P$ and for~$\ind$.
Besides, the closure operator $\mat_P$ induced by $\ind_P$ satisfies
$\mat_P(X) = \mat(X P)$; in particular, $\mat_P(\emptyset) = P$.
Moreover, if $\ind$ satisfies \rom(*\rom), then $\ind_P$ also satisfies \rom(*\rom).
\end{proposition}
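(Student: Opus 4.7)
The plan is to treat the bulk of the axioms in one stroke and concentrate on Extension. Invariance, Symmetry, Normality, Monotonicity, Base Monotonicity, Transitivity, Finite Character, and Local Character (with the same~$\kappa_0$) all follow immediately from Remark~\ref{rem:localization}, since the predicate $P$ is $\emptyset$-definable in~$\Ltwo$ and hence $\aut(\Mp)$-invariant.

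For Extension, fix small $A,B,C\subseteq\Mp$ and aim to produce $A'\elem^2_C A$ with $A'\ind_{CP}B$. First, using local character of~$\ind$, I would choose a small $\bar e\subseteq P$ containing $P(ACB)$ with $ACB\ind_{\bar e}P$, so that $ACB\bar e$ is \Pindependent. Next, I would find $A'$ satisfying $A'\elem^1_{C\bar e}A$ together with $A'\ind_{C\bar e}BP$ by combining the small-base $\Lang$-Extension axiom with $\Ltwo$-saturation of $\Mp$ as a monster of~$\Td$: every small fragment of ``$A'\ind_{C\bar e}BP$'' involves only a small portion of~$BP$ and is directly realizable by Extension, and saturation assembles such fragments into a single~$A'$. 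Then I would verify $A'$ has the correct $\Ltwo$-type over~$C$: $A'C\bar e$ is \Pindependent by base monotonicity on $A'\ind_{C\bar e}P$; and $\Ptp(A')=\Ptp(A)$ follows from $\mat(P)=P$ (since $P$ is closed in $\Mp$ by Remark~\ref{rem:density-substructure}) together with $\mat(C\bar e)\cap P=\mat(\bar e)$ for the \Pindependent set $C\bar e$, so an element $a'\in A'\cap P$ lies in $\mat(\bar e)$, forcing the corresponding $a\in A$ (matched via the $\Lang$-type over~$\bar e$) to lie in $\mat(\bar e)\subseteq P$; the reverse direction is automatic because $A\cap P\subseteq\bar e$ is fixed pointwise. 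Proposition~\ref{prop:back-and-forth} applied to $A\bar e$ and $A'\bar e$ then yields $A'\elem^2_C A$, and base monotonicity on $A'\ind_{C\bar e}BP$, enlarging the base to~$CP$, gives $A'\ind_{CP}B$.

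The identity $\mat_P(X)=\mat(XP)$ unwinds the definitions: $c\in\mat_P(X)$ iff $c\ind_{XP}c$ iff $c\in\mat(XP)$. For preservation of~(*), given $A\notind_{CP}B$, applying~(*) for $\ind$ produces an $\Lang$-formula $\varphi(\bar x,\bar y,\bar z)$ and witnesses $\bar a\in A$, $\bar b\in B$, $\bar c$ a finite subtuple of~$CP$; splitting $\bar c=\bar c_1\bar c_2$ with $\bar c_1\subseteq C$ and $\bar c_2\subseteq P$, the $\Ltwo$-formula $\varphi(\bar x,\bar y,\bar z_1,\bar z_2)\wedge P(\bar z_2)$ witnesses~(*) for~$\ind_P$. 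The delicate point will be producing $A'$ in the Extension argument: since the Extension axiom is only stated for small bases, realizing $A'\ind_{C\bar e}BP$ requires the compactness-and-saturation argument above to reduce the non-small target to its small fragments.
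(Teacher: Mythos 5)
Your decomposition — easy axioms from Remark~\ref{rem:localization}, $\mat_P(X) = \mat(XP)$ by unwinding, (*) via an existential over $P$, and Extension by choosing a small $\bar e \subseteq P$ with $ACB \ind_{\bar e} P$ and using Proposition~\ref{prop:back-and-forth} to upgrade $\Lang$-equivalence — tracks the paper closely. The gap is in producing $A'$ with $A' \equiv^1_{C\bar e} A$ and $A' \ind_{C\bar e} BP$. You propose to realize each small fragment $A' \ind_{C\bar e} D'$ (for $D' \subseteq BP$ small) and ``assemble'' by $\Ltwo$-saturation; but these fragments are not $\Ltwo$-type conditions on $A'$. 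This proposition does not assume Strong Finite Character, so $\set{A' : A' \ind_{C\bar e} D'}$ need not be a closed set of types, and saturation gives no purchase. Even if each fragment were type-definable, you would still owe an argument that the whole family of constraints over varying $D'$ is jointly finitely satisfiable.

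The paper avoids this by splitting the task: first apply the ordinary $\Lang$-Extension axiom (valid since $B$ is small) to get $A_0 \equiv^1_{C\bar e} A$ with $A_0 \ind_{C\bar e} B$; then use the Extension property of the \emph{lovely pair} $\Mp$ — available because loveliness is first-order and $\Mp$ is a sufficiently saturated model of $\Td$, together with its small-tuple version — to get $A' \equiv^1_{C\bar e B} A_0$ with $A' \ind_{C\bar e B} P$. Invariance then gives $A' \ind_{C\bar e} B$, and Left Transitivity combines the two into $A' \ind_{C\bar e} BP$. With that repaired, your remaining steps (that $A'C\bar e$ is \Pindependent, the $\Ptp$ computation, the back-and-forth, the final base monotonicity) go through. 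One further small correction: condition (*) requires the witnessing tuple to be a finite subtuple of the base $C$, so the $P$-coordinates cannot remain free; you must existentially quantify them, $\psi(\bar x, \bar y, \bar z_1) := (\exists \bar z_2)\,\Pa{P(\bar z_2) \wedge \varphi(\bar x, \bar y, \bar z_1, \bar z_2)}$, as the paper does.
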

In particular, one can consider $\ind_P$-lovely pairs.
\begin{proof}
Let us verify the various axioms of an independence relation.
Invariance is clear from invariance of~$\ind$.
Symmetry, Monotonicity, Base Monotonicity, Transitivity, Normality,
Finite Character and Local Character (with the same constant $\kappa_0$)
follow from Remark~\ref{rem:localization}.

It remains to prove Extension; instead, we will prove the Existence Axiom
(which, under the other axioms, is equivalent to Extension 
\cite[Exercise 1.5]{adler}).
Let $A$, $B$ and $C$ be small subsets of $\Mb$.
Let $P_0 \subset P$ be a small subset, such that $P(A B C) \subseteq P_0$ and
$A B C \ind_{P_0} P$ (here we use that Local Character holds also for large
subsets of~$\monster$).
\Wlog, we can assume that $P_0 \subset A \cap B \cap C$.
Let $A' \equiv^1_{C} A$ such that $A' \ind_{C} B$.
Since $\Mb(P)$ is a lovely pair,
there exists $A'' \equiv^1_{C B} A'$ such that $A'' \ind_{C B} P$.
Notice that $A'' \ind_{C} B$.
Hence, by some forking calculus, $A'' \ind_{C P} B$.
Moreover, $A'' C \equiv^1 A C$,
and $A C$ is \Pindependent.
We claim that $A'' C$ is also \Pindependent:
in fact, $C B \ind_{P_0} P$ and $A'' \ind_{B C} P$, and hence
$A'' B C \ind_{P_0} P$.
Since $P$ is closed, this also implies that $P(A'' B C) = P_0$, and
thus, since $\Mb(P)$ is lovely, $A'' C \equiv^2 A C$, and we are
done. 

Assume now that $\ind$ satisfies (*).
Let $a$ and $b$ be finite tuples in~$\Mb$,
and $C$ be a (not necessarily small) subset of~$\Mb$.
Assume that $a \notind_{P C} B$.
Then, by (*), there exists an $\Lang$-formula $\phi(x, y, z, w)$ and finite
tuples $c$ in $C$ and $p$ in $P$, such that
$\Mb \models \phi(a, b, c, p)$ and, for every $a' \subset \Mb$, $c' \subset C$ 
and $p' \subset P$, if $\Mb \models \phi(a', b, c', p')$,
then $a' \notind_{C  P} b$. 
Let $\psi(x, y, z)$ be the $\Ltwo$-formula
$(\exists w \in P) \phi(x, y, z)$.
Then, $\psi$ witnesses the fact that $\ind_P$ satisfies (*).
\end{proof}

\begin{conjecture}
There exists an independence relation $\ind[2]$ on~$\Mp$, such that:
\begin{enumerate}
\item $\ind[2]$ coincides with $\ind$ on subsets of $P$;
\item $\ind[2]_P = \ind_P$.
\end{enumerate}
\end{conjecture}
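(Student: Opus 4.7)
The natural plan is to guess an explicit candidate for $\ind[2]$ and then verify the axioms. Motivated by the well-known formulae for independence in beautiful pairs and simple lovely pairs, I would define
\[
A \ind[2]_C B \quad \iff \quad A \ind_{C \cup P(ABC)} B \ett P(AC) \ind_{P(C)} P(BC).
\]
The idea is that the first clause enforces non-forking in the original sense once one has absorbed the relevant $P$-parts into the base, while the second clause records the residual independence that must hold at the level of $P$ itself (where $\ind[2]$ is forced to agree with $\ind$).

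For the two requirements in the conjecture, both are short computations. If $A,B,C \subseteq P$ then $P(ABC) = A\cup B\cup C$, so clause (i) is automatic by Normality and clause (ii) becomes $AC \ind_{C} BC$, which by Normality/Symmetry is just $A \ind_C B$; so $\ind[2]$ restricted to $P$ coincides with $\ind$. For the localization, $A \ind[2]_{CP} B$ means $A \ind_{CP \cup P(ABCP)} B$ and $P(AC\cup P) \ind_{P(CP)} P(BC\cup P)$; since $P(ABCP)\subseteq P$, the first condition is exactly $A \ind_{CP} B$, and since $P(CP) = P$ already contains $P(AC)$ and $P(BC)$, the second holds trivially. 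Hence $(\ind[2])_P = \ind_P$.

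It then remains to check the independence relation axioms for $\ind[2]$. Invariance, Symmetry, Normality, Monotonicity, Base Monotonicity and Finite Character all follow by applying the corresponding axiom of $\ind$ clause-by-clause, using that $P$ is $\aut(\Mp)$-invariant. Local Character with some $\kappa_0^{(2)}$ depending only on $\kappa_0$ follows from Remark~\ref{rem:local-char} applied separately to the two clauses. For Extension, one would proceed in two stages: given $A$ and $B\supseteq C$, first use Density/Extension in the lovely pair together with Lemma~\ref{lem:independent-model} to produce $A'\equiv^1_C A$ with $P(A'C) \ind_{P(C)} P(BC)$, and then apply the Extension axiom for $\ind$ over the enlarged base $C \cup P(A'BC)$ to arrange clause (i); the care needed is to keep the $P$-part of $A'$ frozen during the second step, which is exactly what the density/saturation of $P(\Mb)$ provides.

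The real obstacle is Transitivity. For $B \subseteq C \subseteq D$ the set $P(ABD)$ appearing in the base of clause (i) changes as one moves from $D$ to $C$, so the identity
\[
A \ind[2]_B D \iff A \ind[2]_B C \ett A \ind[2]_C D
\]
is not a direct consequence of Transitivity of $\ind$: one has to amalgamate a non-forking statement over $B \cup P(ABD)$ with non-forking statements over $B\cup P(ABC)$ and $C \cup P(ACD)$ respectively, and similarly for the $P$-clause. In the simple case this is exactly where one invokes the Independence Theorem over models; in the abstract setting only the axioms of~\cite{adler} are available, and I expect this step to either require a genuinely new idea or to force additional assumptions on $\ind$ (for instance strong finite character together with an abstract 3-amalgamation property, or the existence of a full $\indeq$ satisfying Transitivity over imaginary $P$-closures). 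This is presumably why the statement is left as a conjecture.
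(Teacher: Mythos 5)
This statement is labeled as a conjecture in the paper, and the author offers no proof of it, so there is no argument of the paper's to compare yours against and a complete proof should not be expected. Your strategy of proposing an explicit two-clause candidate for $\ind[2]$ and checking the two stated conditions is a reasonable one, and your verifications of (1) and (2) are essentially correct. One small correction: in checking (1), the first clause $A \ind_{ABC} B$ for $A, B, C \subseteq P$ holds because $A \subseteq \mat(ABC)$ (see the remark ``If $A \subseteq \mat(B)$, then $A \ind_B C$''), not by Normality.

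Your diagnosis of Transitivity as the decisive obstruction is accurate: the moving base $C \cup P(ABC)$ breaks the clause-by-clause argument, and the axioms of \cite{adler} do not supply the amalgamation that the Independence Theorem gives in the simple case. You should, however, also flag Extension as a second genuine obstruction rather than a routine step: an independence relation on $\Mp$ requires producing $A'$ with $A' \equiv^2_C A$ (the full $\Ltwo$-type over $C$), not merely $A' \equiv^1_C A$ as you write. Upgrading $\Lang$-type-equality to $\Ltwo$-type-equality via Proposition~\ref{prop:back-and-forth} needs the relevant tuples to be \Pindependent, and it is not clear that this property survives the two-stage construction you sketch. Given that the paper leaves this open, your candid report of the gap, rather than a claimed proof, is the right conclusion.
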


\subsection{Rank}

Assume that $\ind$ is superior (that is, $\kappa_0 = \omega$).
We have seen that then $\ind_P$ is also superior.
Let $\U \coloneqq \Uind$ be the rank on $\monster$ induced by $\ind$ and $\Up$ the rank on~$\Mp$
induced by $\ind_P$.
We now investigate the relationship between the 2 ranks.
Given a partial $\Lang$-type $\pi(\x)$, we define $U(\pi)$ as the supremum of
the $\U(q)$, where $q(\x)$ varies among the complete $\Lang$-types extending
$\pi(\x)$, and similarly for $\Up$ on partial $\Ltwo$-types.
However, if $q$ is a complete $\Lang$-type, then $q$ is also a partial
$\Ltwo$-type.
Hence, we can compare $\U(q)$ and $\Up(q)$.

\begin{lemma}[{\cite[8.31]{fornasiero-matroids}}]
For every $B \subset \monster$ small and every $q \in S^1_n(B)$,
$\U(q) = \Up(q)$.
The same equality holds for partial $\Lang$-types.
\end{lemma}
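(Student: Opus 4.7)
The heart of the argument will be the identity
\[
\Up(\tp^2(c/B)) = \U(\tp^1(c/BP)), \qquad (\star)
\]
valid for every tuple $c \in \Mb^n$ and every small $B \subset \Mb$. My plan is first to establish $(\star)$ by induction on the ordinal rank, and then to deduce the Lemma by taking suprema over realisations.

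For $(\star)$, I would prove by induction on $\alpha$ that $\Up(\tp^2(c/B)) \geq \alpha$ iff $\U(\tp^1(c/BP)) \geq \alpha$; the limit and base cases are automatic. In the successor step $\alpha = \beta + 1$, unfold definitions: the $\ind_P$-forking extensions of $\tp^2(c/B)$ are the $\tp^2(c/D)$ with $D \supseteq B$ and $c \notind_{BP} D$ (by the very definition of $\ind_P$), while the $\ind$-forking extensions of $\tp^1(c/BP)$ are the $\tp^1(c/E)$ with $E \supseteq BP$ and $c \notind_{BP} E$. The two families match: from $D$ pass to $E := DP$ (monotonicity preserves $c \notind_{BP} E$, and the inductive hypothesis applied to $D$ converts $\Up(\tp^2(c/D)) \geq \beta$ into $\U(\tp^1(c/DP)) \geq \beta$); from $E$ simply take $D := E$, noting that $DP = E$ because $E \supseteq P$, so $\tp^1(c/DP) = \tp^1(c/E)$ and the inductive hypothesis applies in the reverse direction.

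Given $(\star)$, the passage to the full statement is short. Unfolding the definition of $\Up$ on a partial $\Ltwo$-type,
\[
\Up(q) = \sup\bigl\{\Up(q^2) : q^2 \in S^2_n(B),\ q^2 \supseteq q\bigr\} = \sup_{c \models q} \U(\tp^1(c/BP)).
\]
For each realisation $c \models q$, monotonicity of the rank under base extension yields $\U(\tp^1(c/BP)) \leq \U(\tp^1(c/B)) = \U(q)$. Conversely, the Extension axiom produces some $c \models q$ with $c \ind_B P$, so that $\U(\tp^1(c/BP)) = \U(\tp^1(c/B)) = \U(q)$; the supremum is attained and $\Up(q) = \U(q)$. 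For a partial $\Lang$-type $\pi$, every complete $\Ltwo$-type extending $\pi$ restricts to a complete $\Lang$-type extending $\pi$, and every such $\Lang$-type $q$ has $\Up(q) = \sup_{q^2 \supseteq q} \Up(q^2)$ by definition; hence $\Up(\pi) = \sup_{q \supseteq \pi} \Up(q) = \sup_{q \supseteq \pi} \U(q) = \U(\pi)$.

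I expect no substantive model-theoretic obstacle; the work is mostly careful bookkeeping in $(\star)$, in particular verifying that the large base $BP$ causes no trouble (which is fine, since all the independence axioms apart from Extension hold for non-small bases, as shown earlier in the paper) and recalling that the well-founded rank associated to a forking-type order is non-increasing under base extension.
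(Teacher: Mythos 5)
Your approach follows the same two-induction skeleton as the paper, and your closing paragraph's use of the Extension step (choosing $c \models q$ with $c \ind_B P$ to attain the supremum) matches the key move in the paper's backward induction. But the pivotal identity $(\star)$, namely $\Up(\tp^2(c/B)) = \U(\tp^1(c/BP))$, is not well-posed as stated: the right-hand side asks for the $\U$-rank of a type over the \emph{large} set $BP$, and the paper only defines $\Uind$ via the well-founded order $\nforkext$ on complete types over \emph{small} sets. The difficulty is not merely cosmetic. In the reverse direction of your induction for $(\star)$ you set $D := E$ with $E \supseteq BP$, so $D$ is large, and then neither $\Up(\tp^2(c/D))$ nor the inductive hypothesis (stated for small base) is available. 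And the two rank properties you invoke over $BP$ — monotonicity under base extension and ``non-forking extensions preserve rank'' — are not free once the base is large: the usual proof of the latter runs through the Extension axiom, which is exactly what fails for large sets. Also, to produce $c \models q$ with $c \ind_B P$ you need the lovely-pair Extension Property (or Lemma on small tuples), not the abstract Extension axiom of $\ind$, since $P$ is large.

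The paper sidesteps all of this by never leaving small bases. In the forward induction it takes a witness $C$ for the $\ind_P$-forking of $\av$ and uses Local Character to find a small $P_0 \subset P$ with $\av \ind_{C P_0} P$, replaces $C$ by the small set $C' := C P_0$ (this doesn't change $\matP(C)$, so it doesn't change $\Up(\av/C)$), and then observes that $\av$ forks over $B$ in the ordinary $\ind$-sense; every rank that appears is over a small set. In the backward induction it applies the Extension Property to move the witness $C'$ to $C \ind_B P$ before arguing. If you push the Local Character step into $(\star)$ — i.e.\ replace $BP$ and $DP$ by small sets of the form $B P_0$, $D P_0$ that absorb the relevant dependence on $P$ — your argument closes the gap, but at that point it has become the paper's proof.
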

\begin{proof}
First, we will prove, by induction on $\alpha$, that, for every ordinal number
$\alpha$, and every $\Lang$-type~$q'$, if
$\Up(q') \geq \alpha$, then $\U(q') \geq \alpha$.
If $\alpha$ is limit, the conclusion follows immediately from the inductive
hypothesis.
Assume that $\alpha = \beta + 1$, and that $q' = q$.
Let $C \supset B$ be a small set and $\av \in \monster^n$, such that
$\Up(\av / C) \geq \beta$ and $\av \notind_{PB} C$ ($C$ and $\av$ exist by
definition of~$\Up$).
By inductive hypothesis, $\U(\av / C) \geq \beta$.
Let $P_0 \subset P$ be small (actually, finite), such that 
$\av \ind_{C P_0} P$; define $C' := C P_0$.
Notice that $\av \notind_{P B} C'$ and $\av \ind_{C'} P$.
\begin{claim}
$\av \notind_B C'$.
\end{claim}
If not, then, since $B \subset C'$ and by transitivity, we would have $a
\ind_B C' P$, and therefore $a \ind_{P B} C'$, contradiction.

Moreover, since $\matP(C') = \matP(C)$, we have $\Up(a / C') = \Up(a / C) \geq
\beta$.
Hence, by Inductive Hypothesis, $\U(a / C') \geq \beta$.
Since $a \notind_B C'$, we have $\U(a  / B) \geq \beta + 1= \alpha$, and we
are done.
Therefore, $\U(q) \geq \Up(q)$.

Second, we will prove by induction on~$\alpha$, that, for every ordinal 
number~$\alpha$, and every $\Lang$-type~$q'$, if
$\U(q') \geq \alpha$, then $\Up(q') \geq \alpha$.
If $\alpha$ is a limit ordinal, the conclusion is immediate from the inductive
hypothesis.
If $\alpha = \beta + 1$, let $C' \supset B$ be a small set
and $r' \in S^1_n(C')$, such that $q \forkext r'$ and $\U(r') \geq \beta$.
By the Extension Property, there exists $C \elem^1_B C'$ such that 
$C \ind_B P$; let $f \in \aut(\monster/B)$ such that $f(C') = C$, and let $r
:= f(r')$.
Then, $q \forkext r$ and $\U(r) \geq \beta$.
By inductive hypothesis, $\Up(r) \geq \beta$.
Let $a \in \monster$ be any realization of~$r$.
If $a \ind_{PB} C$, then, since $C \ind_B P$, we have $a \ind_B C$,
contradicting \mbox{$q \notind_B C$}.
Thus, $a \notind_{PB} C$, and hence
then $\Up(q) \geq \Up(a/B) > \Up(r) \geq \beta$, and we are done.
Therefore, $\Up(q) \geq \U(q)$.

For the case when $q$ is a partial $\Lang$-type, let $r \in S^1_n(B)$ be any
complete type extending~$q$. Then, $\U(r) = \Up(r)$.
Since this is true for any such~$r$, we have $\U(q) = \Up(q)$.
\end{proof}


\subsection{Approximating definable sets}

We say that a ran $\U$ is \intro{continuous} if, for every ordinal~$\alpha$ and small set~$B$, the set $\set{q \in S_n(B) : \U(q) > \alpha}$ is closed in $S_n(B)$.

\begin{proposition}[{\cite[8.36]{fornasiero-matroids}}]
Assume that $\ind$ is superior and $\Up$ is continuous.
Let $\bv$ be a small \Pindependent tuple in~$\monster$.
Let $X \subseteq \monster^n$ be $T$-definable over~$\bv$.
Let $Y \subseteq X$ be $\Td$-definable over~$\bv$.
Then, there exists $Z \subseteq X$ $T$-definable over~$\bv$,
such that, for every $\cv \in Z \Sdiff Y$, $\Up(\cv / \bv) < \Uind(X)$.
\end{proposition}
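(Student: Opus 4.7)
The plan is to work in the $\Ltwo$-type space $S^2_n(\bv)$, with $n \coloneqq \Uind(X)$, and exploit continuity of $\Up$ together with the back-and-forth characterization of $\Ltwo$-types over \Pindependent tuples (Proposition~\ref{prop:back-and-forth}). First I would introduce
\[
Y^\circ \coloneqq \set{q \in S^2_n(\bv) : q \models \x \in Y \et \Up(q) \geq n},
\quad
Y^\times \coloneqq \set{q \in S^2_n(\bv) : q \models \x \in X \setminus Y \et \Up(q) \geq n};
\]
both are closed by continuity of $\Up$, they are manifestly disjoint, and their union equals $\set{q : q \models \x \in X,\ \Up(q) \geq n}$. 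Let $\pi : S^2_n(\bv) \to S^1_n(\bv)$ be the restriction map; as a continuous surjection between compact Hausdorff spaces it sends closed sets to closed sets, so $Z^\circ \coloneqq \pi(Y^\circ)$ and $Z^\times \coloneqq \pi(Y^\times)$ are closed in $S^1_n(\bv)$. Using the preceding lemma ($\Uind = \Up$ on $\Lang$-types), their union equals the closed set $R \coloneqq \set{p \in S^1_n(\bv) : p \models \x \in X,\ \Uind(p) \geq n}$.

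The key step is to establish $Z^\circ \cap Z^\times = \emptyset$. Suppose for contradiction $p$ lies in both; pick realizations $\cv_0, \cv_1 \models p$ with $\Up(\cv_i/\bv) = n$, $\cv_0 \in Y$ and $\cv_1 \notin Y$. From $\Up(\cv_i/\bv) = \Uind(\cv_i/\bv) = n$ I deduce that no $\ind$-rank drops when $P$ is added to the base, whence $\cv_i \ind_\bv P$. Combined with $\bv \ind_{P(\bv)} P$ (since $\bv$ is \Pindependent) and transitivity, this yields $\cv_i \bv \ind_{P(\bv)} P$, so each $\cv_i \bv$ is \Pindependent. I next argue that $\Ptp(\cv_0 \bv) = \Ptp(\cv_1 \bv)$: a coordinate lying in $P$ has $\Up$-rank $0$, so the maximality $\Up(\cv_i/\bv) = n$ pins down the set of in-$P$ coordinates of $\cv_i$ in terms of the $\Lang$-type $p$, via the identity $\mat(\bv) \cap P = \mat^M(P(\bv))$ valid for \Pindependent $\bv$. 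With matching $\Lang$-type, matching $\Ptp$ and both tuples \Pindependent, Proposition~\ref{prop:back-and-forth} gives $\cv_0 \bv \elem^2 \cv_1 \bv$, contradicting $\cv_0 \in Y$, $\cv_1 \notin Y$.

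Having partitioned the closed set $R$ as $Z^\circ \sqcup Z^\times$ into two closed subsets, each piece is clopen in $R$; since $S^1_n(\bv)$ is a Stone space, disjoint closed subsets are separated by a clopen set, so there exists an $\Lang$-formula $\chi(\x, \bv)$ with $\set{p \in S^1_n(\bv) : \chi \in p} \cap R = Z^\circ$. Setting $Z \coloneqq \chi(\monster^n, \bv) \cap X$, which is $T$-definable over $\bv$, for any $\cv \in X$ with $\Up(\cv/\bv) = n$ the $\Ltwo$-type $\tp^2(\cv/\bv)$ lies in $Y^\circ$ or $Y^\times$ according as $\cv \in Y$ or not, so $\tp^1(\cv/\bv)$ lies in $Z^\circ$ or $Z^\times$ respectively; disjointness yields $\cv \in Z \iff \cv \in Y$, and hence $Z \Sdiff Y \subseteq \set{\cv : \Up(\cv/\bv) < n}$.

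The hard part will be the $\Ptp$-matching sub-claim inside the disjointness step, namely that at maximal rank $\Up = n$ the pattern of $P$-membership of a generic tuple is determined by its $\Lang$-type over $\bv$. In the matroid or pregeometric case this follows from a counting argument (an $n$-tuple of $\Up$-rank $n$ cannot afford a coordinate of $\Up$-rank $0$), but for a general superior $\ind$ it requires an additivity or Lascar-type inequality for $\Up$; if that route fails, I would fall back to stratifying $X$ according to the $\Lang$-type-definable conditions ``$x_i \in \mat^M(P(\bv))$'' for each $i$, which on \Pindependent tuples is equivalent to ``$x_i \in P$'', and treat each stratum separately.
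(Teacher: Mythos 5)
Your proof is essentially the paper's proof, just unfolded: the paper takes $W \coloneqq \set{q \in S^2_X(\bv) : \Up(q) = \Uind(X)}$, observes (by continuity of $\Up$) that $W$ is closed, and asserts via Transitivity and Proposition~\ref{prop:back-and-forth} that the restriction map is a homeomorphism from $W$ onto its image in $S^1_X(\bv)$; your $Y^\circ \sqcup Y^\times = W$ and the disjointness of $Z^\circ, Z^\times$ is exactly the injectivity of that restriction, and the final clopen-separation step matches the paper's ``by standard arguments'' sentence.

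One clarification on the step you flag as the hard part. You do not need any additivity or Lascar-type inequality for $\Up$, and the ``counting'' heuristic (an $\Uind(X)$-tuple cannot afford a rank-$0$ coordinate) is not sound for general superior $\ind$; the clean argument is precisely your fallback, and it should be your primary route. Concretely: from $\Up\bigl(\tp^2(\cv/\bv)\bigr) = \Uind(\cv/\bv)$ maximal one gets $\cv \ind_\bv P$ --- if not, pick a small $P_0 \subset P$ with $\cv \notind_\bv P_0$ and $\cv \ind_{\bv P_0} P$; then $\tp^2(\cv/\bv P_0)$ is an $\ind_P$-nonforking extension of $\tp^2(\cv/\bv)$, so $\Up\bigl(\tp^2(\cv/\bv)\bigr) = \Up\bigl(\tp^2(\cv/\bv P_0)\bigr) \leq \Uind(\cv/\bv P_0) < \Uind(\cv/\bv)$, a contradiction. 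Once $\cv \ind_\bv P$ is known, a coordinate $c_i$ of $\cv$ lies in $P$ iff $c_i \in \mat(\bv) \cap P = \mat\Pa{P(\bv)}$ (the left-to-right uses $c_i \ind_\bv c_i$, \Pindependence of $\bv$, and the remark $\mat(A) \cap P = \mat\Pa{P(A)}$ for \Pindependent $A$; the right-to-left uses that $P$ is closed). Since membership in $\mat\Pa{P(\bv)}$ is $\aut(\monster/\bv)$-invariant, $\Ptp(\cv\bv)$ is determined by $\tp^1(\cv/\bv)$, and Proposition~\ref{prop:back-and-forth} finishes the disjointness claim exactly as you say. No stratification is needed.
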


\begin{proof}
Let $\alpha \coloneqq \Uind(X)$.
Let $W \coloneqq \set{q \in S^2_X(\bv) : \Up(q) = \alpha}$.
Since $\Up$ is continuous, $W$~is closed.
Let $\theta: S^2_X(\bv) \to S^1_X(\bv)$ be the restriction map and $V \coloneqq \rho(W)$.
Define $\tilde Y \coloneqq \rho\Pa{S^2_Y(\bv) \cap W} \subseteq V$.
By Transitivity and Proposition~\ref{prop:back-and-forth}, $\rho$~is a homeomorphism between $W$ and~$V$, and therefore $\tilde Y$ is clopen in~$W$, and $W$ is closed in $S^1_{X}(\bv)$.
By standard arguments, there exists $Z \subseteq \monster^n$ which is $T$-definable over $\bv$ and such that $S^1_Z(\bv) \cap V = \tilde Y$.
Then, $Z$ satisfies the conclusion.
\end{proof}

\section{Producing more independence relations}
\label{sec:new-independence}

For this section, we assume that loveliness is first-order and that $\Mp =
\pair{\monster, P}$ is a monster model of~$\Td$.
Moreover, we assume that $\ind$ is superior, and we denote 
$\U \coloneqq \Uind$.

\begin{remark}
Let $B$ be a small subset of $\monster$ and $q$ be a complete type over~$B$.
Assume that $\alpha \leq \U(q)$.
Then, there exists a complete type $r$ extending~$q$, such that 
$\U(r) = \alpha$.
\end{remark}
\begin{proof}
By induction on $\beta := \U(q)$.
If $\beta = \alpha$, let $r := q$.
Otherwise, $\beta > \alpha$.
If, for every $r$ forking extension of~$q$, 
$\U(r) < \alpha$, then , by definition,
$\U(q) \leq \alpha$, absurd.
Hence, there exists $r$ forking extension of~$q$, such that $\U(r) \geq
\alpha$.
Thus, $\alpha \leq \U(r) < \beta$, and therefore, by inductive hypothesis,
there exists $s$ extension of~$r$, such that $\U(s) = \alpha$.
\end{proof}

Let $\theta$ be an ordinal such that $\theta = \omega^\delta$ for
some~$\delta$. 
We will use the ``big $\bigo$'' and ``small $\smallo$'' 
notations: $\alpha = \smallo(\theta)$ (or $\beta \gg \alpha$)
if $\alpha < \theta$,
$\alpha = \beta + \smallo(\theta)$ if there exists $\varepsilon <
\theta$ such that $\alpha = \beta + \varepsilon$, 
and $\alpha = \bigo(\theta)$ if there exists $n \in \Nat$ such that
$\alpha \leq n \theta$.
Notice that, since $\theta$ is a power of $\omega$, 
$\smallo(\theta) + \smallo(\theta) = \smallo(\theta)$.

Define $\indtheta$, the coarsening of $\ind$ at~$\theta$, 
in the following way:   
for every $\av$ finite tuple in $\monster$ and every $B$, $C$ 
small subsets of $\monster$,
$\av \indtheta_B C$ if $\U(\av / B) = U(\av/B C) + \smallo(\theta)$.
If $A$ is a small subset of $\monster$, define $A \ind_B C$ if, for every
$\av$ finite tuple in~$A$, $\av \indtheta_B C$.
We will use also the notation $(\ind)^\theta$ for~$\indtheta$.

Notice that $\indtheta$ is trivial if $\theta$ is large enough.
Assume that
\begin{itemize}
\item[(**)] For every $a \in \monster$, $\U(a / \emptyset) = \bigo(\theta)$.
\end{itemize}

Notice that Condition (**) is equivalent to:
\begin{itemize}\item[] 
For every $\av$ finite tuple in $\monster$ and
every $B \subset \monster$, $\U(\av / B) = \bigo(\theta)$.
\end{itemize}
However, it might happen that $\U(\monster) \gg \theta$; for instance, if
$\theta = 1$, it can happen that $\U(a/\emptyset)$ is finite for every $a
\in \monster$ and, for every $n \in \Nat$ there exists $a_n \in \monster$ such
that $\U(a / \emptyset)> n$, and thus $\U(\monster) = \omega \gg 1$.

\begin{proposition}
$\indtheta$ is a superior independence relation on~$\monster$, and $\ind$
refines~$\indtheta$. 
\end{proposition}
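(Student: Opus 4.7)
The plan is to verify the axioms of an independence relation for $\indtheta$ by exploiting two ingredients: (i) the Lascar inequality
\[
\U(\av/C\bv) + \U(\bv/C) \leq \U(\av\bv/C) \leq \U(\av/C\bv) \oplus \U(\bv/C)
\]
for the $\U$-rank attached to the superior relation~$\ind$; and (ii) the arithmetic fact that, since $\theta = \omega^\delta$ is a power of~$\omega$ and (by~(**)) every rank in sight is $\bigo(\theta)$, ordinary ordinal addition $+$ and natural sum $\oplus$ agree modulo $\smallo(\theta)$, so that addition modulo $\smallo(\theta)$ is commutative and left-cancellative, effectively reducing to tracking natural-number coefficients of~$\theta$. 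This also gives the identity $\smallo(\theta) + \smallo(\theta) = \smallo(\theta)$ that the text already records.

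Several axioms will be essentially immediate from the definition. Invariance follows from invariance of $\U$ under automorphisms; Finite Character is built into the definition of $\indtheta$ for infinite left-hand sides; Normality reduces to the identities $\U(\av C/C) = \U(\av/C)$ and $\U(\av C/BC) = \U(\av/BC)$. For Monotonicity, Base Monotonicity, and Left Transitivity I would telescope: for $B \subseteq C \subseteq D$, write $\U(\av/B) = \U(\av/C) + \varepsilon$ and $\U(\av/C) = \U(\av/D) + \eta$ (ordinal subtraction is legal since the bases grow and $\U$ decreases), and observe that $\varepsilon + \eta < \theta$ iff both $\varepsilon, \eta < \theta$. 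Refinement is even quicker: if $\av \ind_B C$ then $\tp(\av/BC)$ is a nonforking extension of $\tp(\av/B)$, so by the standard property of $\U$-rank $\U(\av/B) = \U(\av/BC)$, and \emph{a fortiori} $\av \indtheta_B C$. Refinement then delivers Extension and Local Character for $\indtheta$ (with the same constant~$\kappa_0$) directly from those for~$\ind$; in particular, since $\ind$ is superior ($\kappa_0 = \omega$), so is $\indtheta$.

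The main obstacle will be Symmetry. Assume $\av \indtheta_B \bv$, i.e.\ $\U(\av/B) = \U(\av/B\bv) + \smallo(\theta)$. Thanks to~(**), every rank appearing in the Lascar bounds for $\U(\av\bv/B)$ is $\bigo(\theta)$, so the two bounds collapse modulo $\smallo(\theta)$, giving
\[
\U(\av\bv/B) \equiv \U(\av/B\bv) + \U(\bv/B) \equiv \U(\bv/B\av) + \U(\av/B) \pmod{\smallo(\theta)}.
\]
Substituting the hypothesis $\U(\av/B) \equiv \U(\av/B\bv)$ into the right-hand side and left-cancelling the common term $\U(\av/B\bv)$ modulo $\smallo(\theta)$ will yield $\U(\bv/B) \equiv \U(\bv/B\av) \pmod{\smallo(\theta)}$, which is exactly $\bv \indtheta_B \av$. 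The one technical point to check carefully is that the Lascar inequality is available in this abstract superior setting; I would expect it to follow from the axioms on~$\ind$ collected in Section~\ref{sec:preliminary}, exactly as in the supersimple case (\cite{wagner}, \S5.1), after which the arithmetic cancellation becomes routine bookkeeping with $\theta$-coefficients.
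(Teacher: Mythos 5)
Your proof is correct, and it relies on exactly the same two ingredients as the paper's argument --- Lascar's inequalities and Condition~(**), which together collapse $+$ and $\oplus$ modulo~$\smallo(\theta)$ so that all the arithmetic reduces to natural-number coefficients of~$\theta$ --- but the route is organized differently. The paper singles out as the one non-trivial step a transitivity claim on the left-hand side, namely that $\cv \indtheta_B A$ and $\dv \indtheta_{B\cv} A$ imply $\cv\dv \indtheta_B A$, proves it by the same Lascar-plus-(**) computation you run, and then obtains Symmetry as a corollary by citing Adler's Theorem~1.14 (Symmetry follows from the remaining axioms). You instead prove Symmetry directly, pinning down $\U(\av\bv/B)$ modulo~$\smallo(\theta)$ in two ways via Lascar and cancelling the common $\theta$-coefficient, which is legitimate precisely because (**) makes those coefficients natural numbers. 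The two decompositions are logically equivalent and comparable in length; yours is more self-contained (no appeal to an external symmetry theorem), while the paper's isolates which transitivity property is doing the work. The one point you rightly flag --- that Lascar's inequalities, and the identity $\av \ind_B C \Rightarrow \U(\av/B) = \U(\av/BC)$ you use for Refinement, must hold for the abstract rank $\Uind$ --- is also taken for granted in the paper's own proof, so flagging it is fair, but in a fully rigorous account one should verify that the axioms of \S\ref{sec:preliminary} plus superiority do yield these facts as in the supersimple case.
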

\begin{proof}
The only non-trivial axiom is Left Transitivity, which is also the only place
where we use the Condition (**) 
(Right Transitivity instead is always true).
Assume that $\cv \indtheta_B A$ and $\dv \indtheta_{B \cv} A$, for some finite
tuples $\cv$ and $\dv$, and some small sets $A$ and~$B$.
We claim that $\cv \dv \indtheta_{B} A$.
In fact, by Lascar's inequalities, we have
\begin{multline*}
\U(\cv \dv / B) \leq \U(\dv/ \cv B ) \oplus \U(\cv / B) \leq\\
\U(\dv / \cv \bv A) + o(\theta) \oplus \U(\cv / B A) + \smallo(\theta) =
\U(\dv / \cv \bv A) \oplus \U(\cv / B A) + \smallo(\theta).
\end{multline*}
By (**), we have that
$\U(\dv / \cv \bv A) \oplus \U(\cv / B A) = \U(\dv / \cv \bv A) + \U(\cv / B A)
+ \smallo(\theta)$.
Hence, again by Lascar's inequalities,
$\U(\cv \dv / B) \leq \U(\dv \cv / BA) + \smallo(\theta)$.

Symmetry then follows (see~\cite[Theorem~1.14]{adler}).
\end{proof}

Let $\mattheta$ be the closure operator induced by $\indtheta$: thus,
$a \in \mattheta(B)$ iff $\U(a/B) < \theta$.
Let $\Utheta$ be the rank induced by~$\indtheta$.

\begin{remark}
Let $B$ be a small subset of~$\monster$.
For every $q \in S_m(B)$, we have $\U(q) = n \theta + \smallo(\theta)$, for
a unique $n \in \Nat$.
Then, $\Utheta(q) = n$.
\end{remark}
\begin{proof}
First, we will prove, by induction on~$n$, that, if $\U(q) \geq n \theta$, then
$\Utheta(q) \geq n$.
If $n = 0$, there is nothing to prove.
Assume that we have already proved the conclusion for~$n$, and let $q \in
S_m(B)$ such that $\U(q) \geq (n + 1)\theta$.
Let $C \supset B$ small and $r \in S_M(C)$ extending $q$, such that
$\U(r) = n \theta$.
Thus, by inductive hypothesis, $\Utheta(r) \geq n$, and,
by definition, $r$ is a non-forking extension of $q$ in the sense
of~$\indtheta$.
Therefore, $\Utheta(q) \geq \Utheta(r) + 1  \geq n + 1$.

Conversely, we will now prove, by induction on~$n$, 
that, if $\Utheta(q) \geq n$, then $\U(q) \geq n \theta$.
Again, if $n = 0$, there is nothing to prove.
Assume that we have already proved the conclusion for~$n$, and let $q \in
S_m(B)$ such that $\Utheta(q) \geq n + 1$.
Let $r$ be a $\indtheta$-forking extension of $q$ such that $\U(r)
\geq n$. By inductive hypothesis, $\U(r) \geq n \theta$.
Moreover, by our choice of~$r$, $\U(q) \geq \U(r) + \theta \geq (n + 1)
\theta$. 
\end{proof}

Notice that there is at most one $\theta$ satisfying (**) and such that $\indtheta$ is nontrivial, that is the minimum ordinal satisfying (**).

\begin{example}
Let $T = ACF_0$.
Let $\Td$ be the theory of Beautiful Pairs for~$T$; that is 
$\pair{M, P(M)} \models \Td$ if $M \models ACF_0$ and $P(M)$ is a proper 
algebraically closed subfield of~$M$.
Then, $\Td$ is $\omega$-stable of $\U$-rank~$\omega$.
Let $\Mp$ be a monster model of~$\Td$.
We have two natural equivalence relations on $\Mp$: Shelah's forking $\indf$
and $\indp$, where $\ind$ is Shelah's forking relation for models of~$T$.
We claim that $(\indf)^\omega = \indp$.

In fact, according to both $(\indf)^\omega$ and $\indp$, $\pair{M, P}$ has
rank~1.
However, since $\pair{M, P(M)}$ expands a field, there is at most one
independence relation on it that has rank 1 \cite{fornasiero-matroids}.
\end{example}

\section*{Acknowledgements}
I am grateful to Hans Adler, Janak Ramakrishnan, Katrin Tent, Evgueni
Vassiliev, and Martin Ziegler for their help.
Special thanks to Gareth Boxall for the many discussions on the
subject: many of the ideas and of the proofs are by him.

\bibliographystyle{amsalpha}	
\bibliography{geometry}		

\end{document}